\setlist[enumerate,1]{label={\textup{(\roman*)}}} 
\theoremstyle{proclaim}
\newtheorem{theorem}{Theorem}[section]
\newtheorem{question}[theorem]{Question}
\newtheorem{problem}[theorem]{Problem}
\newtheorem{lemma}[theorem]{Lemma}
\newtheorem{corollary}[theorem]{Corollary}
\newtheorem{proposition}[theorem]{Proposition}
\newtheorem{algorithm}[theorem]{Algorithm}
\theoremstyle{statement}
\newtheorem{definition}[theorem]{Definition}
\newtheorem{remark}[theorem]{Remark}
\numberwithin{equation}{section}
\DeclareMathOperator{\trace}{Tr} 
\DeclareMathOperator{\diag}{diag}
\DeclareMathOperator{\rank}{rank}
\DeclareMathOperator{\range}{ran}
\DeclareMathOperator{\spec}{sp}
\DeclareMathOperator{\id}{Id}
\DeclareMathOperator{\spans}{span}
\renewcommand{\Re}{\ensuremath{\mathfrak{Re}}}
\renewcommand{\Im}{\ensuremath{\mathfrak{Im}}}
\newcommand{\norm}[1]{\ensuremath{\left\lVert #1 \right\rVert }}
\newcommand{\abs}[1]{\ensuremath{\left\lvert #1 \right\rvert}}
\newcommand{\angles}[1]{\ensuremath{\left\langle #1 \right\rangle}}
\newcommand{\snorm}[1]{\ensuremath{\lVert #1 \rVert }}
\newcommand{\sabs}[1]{\ensuremath{\lvert #1 \rvert}}
\mathchardef\expandafter\varphi\number\expandafter\phi\expandafter\relax
\mathchardef\expandafter\phi\number\varphi
\begin{document}




\title[Diagonality of idempotents]{Diagonality and Idempotents \\ with applications to problems in \\ operator theory and frame theory}
\author[Jireh Loreaux {\protect \and} Gary Weiss]{Jireh Loreaux {\protect \and} Gary Weiss}
\address{Jireh Loreaux, Department of Mathematics, McMicken College of Arts and Sciences, University of Cincinnati, Cincinnati, Ohio, 45212, USA}
\email{loreaujy@mail.uc.edu, loreaujy@gmail.com}
\address{Gary Weiss, Department of Mathematics, McMicken College of Arts and Sciences, University of Cincinnati, Cincinnati, Ohio, 45212, USA}
\email{gary.weiss@uc.edu, weissg@ucmail.uc.edu, gary.weiss@math.uc.edu}

\begin{abstract}
  We prove that a nonzero idempotent is zero-diagonal if and only if it is not a Hilbert--Schmidt perturbation of a projection, along with other useful equivalences.
  Zero-diagonal operators are those whose diagonal entries are identically zero in some basis.
  
  We also prove that any bounded sequence appears as the diagonal of some idempotent operator, thereby providing a characterization of inner products of dual frame pairs in infinite dimensions. 
  Furthermore, we show that any absolutely summable sequence whose sum is a positive integer appears as the diagonal of a finite rank idempotent.
\end{abstract}

\keywords{idempotents, diagonals, zero-diagonal, Hilbert--Schmidt perturbation, dual frame}

\subjclass[2010]{Primary 46A35, 47B10, Secondary 47A12, 40C05, 47A55}

\maketitle


\section{INTRODUCTION}

Throughout this paper the underlying space is either a finite dimensional or separable infinite dimensional complex Hilbert space.
We first establish some terminology. 
A ``basis'' herein is an \emph{orthonormal} basis of the underlying Hilbert space. 
When a basis $\mathfrak{e} = \{e_j\}_{j=1}^N$ (possibly $N=\infty$) is specified, ``diagonal'' or ``diagonal sequence'' of an operator $T$ is the sequence $\angles{(Te_j,e_j)}$, that is, the diagonal sequence of the matrix representation for $T$ with respect to the basis $\mathfrak{e}$. 
Sometimes we will say that a sequence is ``a diagonal'' of $T$, by which we mean that there exists some basis with respect to which $T$ has this sequence as its diagonal.

Diagonality is a term coined by the authors for the study of:
\begin{enumerate}[label=(\alph*)]

\item \label{item:4} properties that the diagonal sequences can possess for a fixed operator and in all bases, and characterizations of those sequences;

\item \label{item:5} properties that the diagonal sequences can possess for a class of operators and in all bases, and characterizations of those sequences. 
\end{enumerate}
Such information is used ubiquitously throughout operator theory. 
With this term we here attempt to bring 
these phenomena under a unifying umbrella in the hope this will stimulate bridges of insight connecting them.
This paper focuses mainly on \ref{item:5}, but some results also have the flavor of \ref{item:4}.

Starting with the most basic and then on to current active areas of research, we give some examples that pervade our work.

\begin{enumerate}[leftmargin=0cm,itemindent=0.75cm,labelwidth=\itemindent,labelsep=0cm,align=left, topsep=6pt, itemsep=6pt]

\item Which numbers can appear on the diagonal of an operator? 
Clearly these numbers constitute precisely its numerical range.
And which operators have only positive diagonal entries? Clearly these are the positive operators.

\item Well-known highly useful diagonality example: 
every trace-class operator in every basis has an absolutely summable diagonal sequence and those sums are invariant; likewise every compact operator has diagonal sequences tending to zero in every basis.
 In contrast, finite rank operators fail to always have finite rank diagonals, witness any nonzero rank-one projection $\xi\otimes\xi$, $\xi\in\ell^2$ of infinite support. 

This phenomenon for the trace-class ideal and the ideal of compact operators is subsumed under the more general notion of diagonal invariance.
 Given a basis $\mathfrak{e}$, we let $E_{\mathfrak{e}}(T)$ be the conditional expectation of $T$ with respect to $\mathfrak{e}$ which replaces the off-diagonal entries with zeros.
 An ideal $\mathcal{I}$ is said to be diagonally invariant if for every $\mathfrak{e}$ and every $T\in\mathcal{I}$, $E_{\mathfrak{e}}(T)\in\mathcal{I}$.
 Diagonal invariance is equivalent to the ideal being arithmetic mean-closed ($_a(\mathcal{I}_a) = \mathcal{I}$, am-closed for short). For details see \cite[Theorem 4.5]{KW-2011-IUMJ}, but for now, $\mathcal{I}_a$ and $_a\mathcal{I}$ are the arithmetic mean and pre-arithmetic mean ideals generated respectively by: operators with $s$-numbers the arithmetic means of the $s$-numbers of operators from ideal $\mathcal{I}$, and operators whose arithmetic means of their $s$-numbers are $s$-numbers of operators in $\mathcal{I}$.

The converses seem to us to be less well-known: if in every basis an operator's diagonal sequence is absolutely summable, then the operator is trace-class; and likewise if in every basis the operator's diagonal sequence tends to zero, then it is a compact operator.
This phenomenon is totally general.
That is, a sufficient test for membership in an arbitrary ideal $\mathcal{I}$ is: 
\begin{equation}
  \label{eq:2}
  E_{\mathfrak{e}}(T)\in\mathcal{I}, \forall\mathfrak{e} \implies T\in\mathcal{I}.  
\end{equation}
Although not immediate, this follows easily from the contrapositive by considering the real and imaginary parts of $T$, and by considering separately the compact and non-compact cases. 

\item \label{item:6} What diagonal sequences can arise for a specific operator? The study of \ref{item:4}. 

  We think of this subject as Schur--Horn theory, although traditionally Schur--Horn theory refers to the study of the diagonals of selfadjoint operators, a study almost a century old that continues today and is beginning to extend into operator algebras.

Much of this work focuses on diagonals of positive compact operators.
A fundamental tool used is majorization theory, including new types of majorization such as $\infty$- and approximate $\infty$-majorization defined using $p$- and approximate $p$-majorization.
 Convexity also plays a central role.
Some 1923--1964 contributors are Schur \cite{Sch-1923-SBMG}, Horn \cite{Hor-1954-AJoM}, Markus \cite{Mar-1964-UMN}, Gohberg--Markus \cite{GM-1964-MSN}, and in the last 10 years --- Arveson--Kadison \cite{AK-2006-OTOAaA}, Antezana--Massey--Ruiz--Stojanoff \cite{AMRS-2007-IJoM}, Kaftal--Weiss \cite{KW-2010-JoFA} and  Loreaux--Weiss \cite{LW-2015-JFA}.
 Others for operator algebra Schur--Horn theory include Argerami and Massey \cite{AM-2007-IUMJ}, \cite{AM-2008-JMAA}, \cite{AM-2013-PJM} and most recently Ravichandran \cite{Rav-2012} and Kennedy--Skoufranis \cite{KS-2014}. 

Schur--Horn theory for finite spectrum selfadjoint operators was studied extensively by Kadison \cite{Kad-2002-PotNAoS}, \cite{Kad-2002-PotNAoSa} (the carpenter problem for projections, or equivalently 2-point spectrum normal operators), Arveson \cite{Arv-2007-PotNAoSotUSoA} (a necessary condition on diagonals of certain finite spectrum normal operators), Jasper \cite{Jas-2013-JoFA} (3-point spectrum selfadjoint operators), and Bownik--Jasper \cite{BJ-2012}, \cite{BJ-2013-TAMS} (finite spectrum selfadjoint operators), and along with \cite{Neu-1999-JoFA} are the only non-compact operator results known to the authors.

In \cite{Neu-1999-JoFA}, A. Neumann obtained a Schur--Horn type theorem for general selfadjoint operators.
 However, it should be noted that his results are \emph{approximate} in the sense that he identified the $\ell^\infty$-closure of the diagonal sequences of a selfadjoint operator with a certain convex set. 
In contrast, the aforementioned results of Kaftal--Weiss, Loreaux--Weiss, Kadison, Jasper and Bownik--Jasper are all \emph{exact} in the sense that they describe precisely the diagonals of certain classes of selfadjoint operators. 

\item \label{item:7} What diagonal sequences can arise for a class of operators? The study of \ref{item:5}.

There is a variety of material on this subject.
We reference only that which we know, but there are almost certainly results we have inadvertently overlooked.

In the same paper \cite{Hor-1954-AJoM} in which he characterizes the diagonal sequences of a fixed selfadjoint matrix in $M_n(\mathbb{C})$, Horn identifies the diagonals of the class of rotation matrices. 
He then uses this to identify the diagonals of the classes of orthogonal matrices and of unitary matrices. See \cite[Theorems 8-11]{Hor-1954-AJoM}.

Fong shows in \cite{Fon-1986-PotEMSSI} that any bounded sequence of complex numbers appears as the diagonal of a nilpotent operator in $B(H)$ of order four ($N^4=0$), thus seamlessly characterizing diagonals of the broader classes of nilpotent and also quasinilpotent operators. 
In this paper Fong remarks that a finite complex-valued sequence appears as the diagonal of a nilpotent matrix in $M_n(\mathbb{C})$ if and only if its sum is zero. 

More recently, Giol, Kovalev, Larson, Nguyen and Tener \cite{GKL+-2011-OaM} classified the diagonals of idempotent matrices in $M_n(\mathbb{C})$ as those whose sum is a positive integer less than $n$, along with the constant sequences $\langle 0,\ldots,0 \rangle$ and $\langle 1,\ldots,1 \rangle$ (see Theorem~\ref{thm:idempotent-matrix-diagonals} below).

\item In this context, J.~Jasper posed to us a frame theory question which for us evolved into questions below on diagonal sequences of idempotents (operators for which $D^2=D$) and gave rise to this paper: Questions~\ref{que:ell1-diagonal-finite-rank}--\ref{que:zero-diagonal-finite-rank} below and the immediately preceding comment on the frame theory connection.
\end{enumerate}

As mentioned above, a good deal of work concerning diagonal sequences of operators deals with the selfadjoint case.
Here we study diagonal sequences of idempotents, and so diagonals of projections (selfadjoint idempotents) are of particular relevance to us. 
These were characterized by Kadison in \cite{Kad-2002-PotNAoS}, \cite{Kad-2002-PotNAoSa} in the following theorem. 
We find this theorem especially interesting because it straddles the fence between \ref{item:6} and \ref{item:7}.
Indeed, although it is stated as a characterization of the diagonals of the class of projections, it can easily be adapted to identify the diagonals of any fixed projection.
This is because two projections $P,P'\in B(H)$ are unitarily equivalent if and only if $\trace P = \trace P'$ and $\trace (1-P) = \trace (1-P')$.
And so for $\angles{d_k}$ an admissible diagonal sequence for $P$, these trace quantities are precisely the sum of the diagonal entries $d_k$ and the sum of $1-d_k$, respectively. 
Then one can apply the four finite/infinite cases in the next theorem.

\begin{theorem}[\protect{\cite{Kad-2002-PotNAoS}}, \protect{\cite{Kad-2002-PotNAoSa}}]
  \label{thm:projection-diagonals}
  Given an infinite sequence $\langle d_k \rangle\in [0,1]^{\mathbb{N}}$ with 
  \begin{equation*} 
  a = \sum_{d_k<\nicefrac{1}{2}} d_k \quad \text{and} \quad b = \sum_{d_k\ge \nicefrac{1}{2}} (1-d_k),
  \end{equation*}
  then there is a projection $P\in B(H)$ (i.e., $P^2=P=P^*$) with diagonal $\langle d_k \rangle$ if and only if one of the following mutually exclusive conditions holds:
  \begin{enumerate}
  \item either $a$ or $b$ is infinite;
  \item $a,b<\infty$ and $a-b\in\mathbb{Z}$.
  \end{enumerate}
\end{theorem}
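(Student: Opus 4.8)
The plan is to prove the two implications separately: the integrality condition in (ii) is the ``Pythagorean'' half and is comparatively soft, while the constructive half --- realizing a prescribed admissible sequence as a diagonal --- is the technical heart and the step I expect to fight. For \emph{necessity}, suppose $P=P^*=P^2$ has diagonal $\angles{d_k}$ in the basis $\{e_k\}$. Since (i) and (ii) are mutually exclusive and (i) imposes no restriction, we may assume $a,b<\infty$. Let $E$ be the diagonal projection with $Ee_k=e_k$ exactly when $d_k\ge\nicefrac{1}{2}$, and put $X=P-E$, $E^\perp=1-E$, $P^\perp=1-P$. From $P=P^*=P^2$ one gets $\norm{Xe_k}^2=d_k$ for $d_k<\nicefrac{1}{2}$ and $\norm{Xe_k}^2=1-d_k$ for $d_k\ge\nicefrac{1}{2}$, so $\sum_k\norm{Xe_k}^2=a+b<\infty$: the self-adjoint operator $X$ is Hilbert--Schmidt, hence compact, and $X^3$ is trace class. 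Expanding $X^3=X\cdot X^2$ with $P^2=P$ and $E^2=E$ yields the identity $X^3=PE^\perp P-EP^\perp E=XE^\perp X+EXE$, and a short computation (using $\trace X^2=a+b$ and $\trace(EX^2)=\sum_{d_k\ge\nicefrac{1}{2}}(1-d_k)=b$) gives $\trace(XE^\perp X)=a$ and $\trace(EXE)=-b$, so $\trace X^3=a-b$. On the other hand $X=P-E$ is a difference of projections with $\norm X\le1$, so by the Halmos two-subspaces picture its spectrum consists of $0$, of $+1$ and $-1$ with finite multiplicities $m_+=\dim(\range P\cap\ker E)$ and $m_-=\dim(\ker P\cap\range E)$, and of pairs $\pm s$ with $s\in(0,1)$ of equal multiplicity; hence $\trace X^3=\sum_n\lambda_n^3=m_+-m_-\in\mathbb Z$ (the $\pm s$ pairs cancel), and therefore $a-b\in\mathbb Z$.

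For \emph{sufficiency} I would first make two harmless reductions: indices with $d_k\in\{0,1\}$ force $Pe_k=0$ or $Pe_k=e_k$ and affect neither $a$ nor $b$, so we split them off as direct summands and assume $d_k\in(0,1)$ for all $k$; and replacing $P$ by $1-P$ (which interchanges $a$ and $b$) lets us normalize signs as convenient. Then I would set up the building blocks. Finite-dimensionally: every $\angles{c_1,\dots,c_n}\in[0,1]^n$ with integer sum $r$ is the diagonal of a rank-$r$ projection in $M_n(\mathbb C)$, by induction on $n$ --- conjugating by a rotation in a coordinate plane replaces a pair $(c_i,c_j)$ by $(c_i+c_j,0)$ or $(1,c_i+c_j-1)$, creating an entry in $\{0,1\}$ which is peeled off. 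Its infinite analogue --- $c_k\in[0,1]$ with $\sum_k c_k=r\in\mathbb Z_{\ge0}$ is the diagonal of a rank-$r$ projection --- follows in the same spirit (for $r=1$ use the rank-one projection onto $(\sqrt{c_1},\sqrt{c_2},\dots)\in\ell^2$; in general complete to $r$ orthonormal vectors with the prescribed columnwise squared norms). Also, when $a,b<\infty$ the set $\{k:\nicefrac{1}{4}\le d_k\le\nicefrac{3}{4}\}$ is finite (each such index contributes at least $\nicefrac{1}{4}$ to $a+b$), so off a finite set the sequence clusters at $0$ and $1$.

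The main obstacle is the genuinely infinite-dimensional realization that does \emph{not} reduce to these blocks: a summable diagonal forces finite rank, so when $a,b<\infty$ the small-valued and large-valued tails cannot simply be peeled off as separate finite-trace pieces, and one cannot force an arbitrary partial sum to be an integer by reassigning finitely many coordinates --- this is where Kadison's ``carpenter'' construction is needed. One builds $P$ directly as an infinite matrix: $P=P^*$ and $P^2=P$ force $\sum_j\abs{p_{jk}}^2=d_k$ for every $k$ together with orthogonality of the columns of $P$ to those of $1-P$, and one fixes the entries recursively in a ``staircase'' / block-bidiagonal pattern, carrying a running deficit that records how much of the projection remains to be accounted for among the not-yet-processed coordinates. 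When $a$ or $b$ is infinite there is always enough slack to continue the recursion and drive the deficit to zero; when $a,b<\infty$ there is no slack, and the build must be balanced so that the accumulated deficit closes up \emph{exactly} --- which is precisely where the hypothesis $a-b\in\mathbb Z$ is consumed. Verifying that the resulting matrix defines a bounded self-adjoint idempotent with diagonal $\angles{d_k}$ is the bulk of the work and the step I expect to be hardest.
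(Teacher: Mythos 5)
First, a point of reference: the paper does not prove Theorem~\ref{thm:projection-diagonals} at all --- it is quoted verbatim from Kadison's two papers --- so there is no in-paper argument to measure yours against, and your proposal has to stand on its own. Your \emph{necessity} half does: it is complete and correct. The computation $\norm{Xe_k}^2=d_k$ for $d_k<\nicefrac{1}{2}$ and $\norm{Xe_k}^2=1-d_k$ for $d_k\ge\nicefrac{1}{2}$ is right, so $X=P-E$ is Hilbert--Schmidt when $a,b<\infty$; the identity $X^3=PE^\perp P-EP^\perp E$ checks out, with $PE^\perp P\ge 0$ of trace $a$ and $EP^\perp E\ge 0$ of trace $b$ (both finite, so $\trace X^3=a-b$ is legitimate); and the symmetry of the spectrum of a compact difference of projections away from $\{0,\pm1\}$ gives $\trace X^3=m_+-m_-\in\mathbb{Z}$, the series $\sum\lambda_n^3$ being absolutely convergent since $X$ is Hilbert--Schmidt and $\norm{X}\le 1$. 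This is the essential-codimension argument of Arveson (and Avron--Seiler--Simon), arguably cleaner than Kadison's original proof of the integrality obstruction; the only things you lean on without proof are standard (the two-projections spectral picture and finiteness of the $\pm1$ eigenspaces, which follows from compactness).

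The \emph{sufficiency} half, by contrast, contains a genuine gap: the construction is described but never performed, and you say so yourself. What you actually establish are the easy reductions (stripping off $d_k\in\{0,1\}$, the symmetry $P\leftrightarrow 1-P$, finiteness of $\{k:\nicefrac{1}{4}\le d_k\le\nicefrac{3}{4}\}$) and the trace-class building blocks (the finite-dimensional carpenter theorem and the rank-one case $\sum c_k=1$; even the rank-$r$ infinite version is asserted rather than proved). You then correctly diagnose that neither of the two remaining cases reduces to these blocks --- when $a$ or $b$ is infinite no finite-rank piece suffices, and when $a,b<\infty$ with $a-b\in\mathbb{Z}$ but $a\notin\mathbb{Z}$ the two tails cannot be decoupled --- but the ``staircase recursion carrying a running deficit'' is only a name for the missing construction, not an argument. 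In particular, nothing in the proposal specifies how the matrix entries are chosen, why the resulting operator is a bounded self-adjoint idempotent, or where exactly the hypothesis $a-b\in\mathbb{Z}$ (respectively $a=\infty$ or $b=\infty$) is consumed; these are precisely the points where Kadison's proof (and the later Bownik--Jasper treatment) does real work. As it stands the proposal proves only one implication of the theorem.
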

The requirement that $0\le d_k\le 1$ for all $k\in\mathbb{N}$ is clearly necessary since $P\ge 0$, $\snorm{P}=1$ and the diagonal entries of $P$ are elements of its numerical range. 
The second condition, that $a-b\in \mathbb{Z}$, is less obvious but can viewed as a kind of index obstruction to an arbitrary sequence in $[0,1]^{\mathbb{N}}$ appearing as the diagonal of a projection. 
Indeed, in \cite{Arv-2007-PotNAoSotUSoA}, Arveson provided details on this index obstruction and showed that it applies more generally to any normal operator with finite spectrum that consists of the vertices of a convex polygon. 

Since we study diagonals of idempotents in $B(H)$, which when not projections are non-selfadjoint, we are interested in diagonals of non-selfadjoint operators.
 One particularly relevant result in this direction is the aforementioned characterization of diagonals of idempotent matrices in $M_n(\mathbb{C})$ by Giol, Kovalev, Larson, Nguyen and Tener \cite{GKL+-2011-OaM}. 

\begin{theorem}[\protect{\cite[Theorem 5.1]{GKL+-2011-OaM}}]
  \label{thm:idempotent-matrix-diagonals}
  A finite sequence $\langle d_k \rangle\in\mathbb{C}^n$ appears as the diagonal of an idempotent $D\in M_n(\mathbb{C})$ if and only if one of the following three mutually exclusive conditions holds.
  \begin{enumerate}
  \item $d_k=0$ for all $k$ (in which case $D=0$);
  \item $d_k=1$ for all $k$ (in which case $D=I$);
  \item $\sum d_k \in \{1,\ldots,n-1\}$.
  \end{enumerate}
\end{theorem}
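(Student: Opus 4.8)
The plan is to prove the two directions separately, necessity directly from the trace and sufficiency by an induction on $n$ built around one elementary ``mass-transfer'' move. \textbf{Necessity} is immediate: if $D\in M_n(\mathbb C)$ satisfies $D^2=D$, its minimal polynomial divides $x(x-1)$, so $D$ is diagonalizable with spectrum in $\{0,1\}$ and $\trace D=\rank D$, an integer in $\{0,\dots,n\}$. Since $\sum_k d_k=\trace D$ is independent of the basis, the diagonal sum equals this integer; if it is $0$ then $\rank D=0$, so $D=0$ and every $d_k=0$; if it is $n$ then $\rank D=n$, so $D=I$ and every $d_k=1$; otherwise it lies in $\{1,\dots,n-1\}$. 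The three cases are mutually exclusive because their diagonal sums are $0$, $n$, and a value strictly between.

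\textbf{Sufficiency, first reductions.} I would first record the duality $D\mapsto I-D$: the sequence $\angles{d_k}$ is a diagonal of a rank-$r$ idempotent if and only if $\angles{1-d_k}$ is a diagonal of a rank-$(n-r)$ idempotent. When $r=1$ one can take $D=uv^{*}$ with $u=(d_1,\dots,d_n)^{\mathsf T}$ and $v=(1,\dots,1)^{\mathsf T}$; then $v^{*}u=\sum_k d_k=1$ forces $D^2=D$ and $\rank D=1$, and $D$ has diagonal $\angles{d_k}$, so by the duality the case $r=n-1$ follows as well. With the two constant cases realized by $D=0$ and $D=I$, what remains is to realize $\angles{d_k}$ with $\sum_k d_k=r$ and $2\le r\le n-2$ (so $n\ge 4$); I would treat this by induction on $n$, the cases $n\le 3$ being subsumed by the above.

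\textbf{The mass-transfer move.} The inductive engine is the following. Suppose $\underline d$ and $\underline{d}'$ agree outside two coordinates $i\ne j$ with $d_i+d_j=d_i'+d_j'$, and $\underline{d}'$ is a diagonal of an idempotent $D'$ whose $\{i,j\}$ principal $2\times2$ block $A$ is not a scalar matrix. Conjugating $D'$ by $S=T\oplus I$, with $T\in GL_2(\mathbb C)$ acting on $\spans\{e_i,e_j\}$, again yields an idempotent, leaves every diagonal entry outside $\{i,j\}$ unchanged, and replaces $A$ by $TAT^{-1}$; and a non-scalar $2\times2$ matrix of trace $t$ can be conjugated to have any prescribed diagonal $(d,\,t-d)$ — conjugate so that an off-diagonal entry is nonzero, then let a unipotent conjugation slide the diagonal by an arbitrary amount. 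Hence $\underline d$ is also a diagonal of an idempotent, and the hypothesis on $A$ holds automatically whenever its two diagonal entries are distinct.

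\textbf{The induction step and the main obstacle.} For $2\le r\le n-2$: if some $d_k\in\{0,1\}$, delete that coordinate and reinstate it afterwards as a $1\times1$ summand $[d_k]$; the deleted sequence has sum $r-d_k$, which is admissible for $M_{n-1}(\mathbb C)$, so the inductive hypothesis finishes this case. If no $d_k\in\{0,1\}$, then among the coordinates $1,2,3$ some pair has nonzero sum (three vanishing pairwise sums would force $d_1=d_2=d_3=0$); relabeling so that $d_1+d_2\ne 0$, apply the previous case to the sequence $\underline d'$ obtained from $\underline d$ by replacing $(d_1,d_2)$ with $(0,\,d_1+d_2)$. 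That sequence has a zero entry and sum $r$, hence is a diagonal of an idempotent of the form $[0]\oplus D''$, whose $\{1,2\}$ block is $\diag(0,\,d_1+d_2)$ — non-scalar since $d_1+d_2\ne 0$ — so the mass-transfer move turns it into an idempotent with diagonal $\underline d$. The step I expect to be the real obstacle is precisely this non-scalar requirement: one has to be sure that whichever idempotent the induction returns has a non-scalar block on exactly the pair of coordinates where mass must be moved. The construction is arranged so that this block is $\diag(0,\,d_1+d_2)$ with $d_1+d_2\ne 0$, thereby side-stepping the difficulty; but checking that such a coordinate pair is always available, together with the elementary yet fussy fact that a non-scalar $2\times2$ matrix can be conjugated onto every admissible diagonal, is where the genuine content sits.
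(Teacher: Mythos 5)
Your argument is correct. Note first that the paper does not actually prove this theorem: it is quoted from Giol--Kovalev--Larson--Nguyen--Tener, and the closest the paper comes to a proof is the second proof of Theorem~\ref{thm:finite-rank-idempotent-diagonals}, which is explicitly modeled on the GKLNT induction. Your route is essentially that same induction in a lightly different dress. The paper's version inducts on the trace $m$: it replaces $(d_1,d_2)$ by $(1,\,d_1+d_2-1)$ under the nondegeneracy condition $d_1+d_2\ne 2$, realizes the shorter sequence of trace $m-1$, forms $[1]\oplus\tilde D$, and restores $(d_1,d_2)$ by conjugating with the explicit similarity $\bigl(\begin{smallmatrix}\lambda & \lambda-1\\ 1 & 1\end{smallmatrix}\bigr)\oplus I$, $\lambda=(d_2-1)/(d_1+d_2-2)$. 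You induct on $n$, peel off a $0$ rather than a $1$ (condition $d_1+d_2\ne 0$, secured by your three-coordinate pigeonhole), and replace the explicit similarity by the general fact that a non-scalar $2\times 2$ matrix can be conjugated onto any diagonal compatible with its trace; your reduction via $D\mapsto I-D$ and the rank-one realization $uv^{*}$ with $v=(1,\dots,1)^{\mathsf T}$ supply the base cases, just as Corollary~\ref{cor:rank-1-idempotent-diags} does in the paper's infinite-dimensional analogue. All the steps you flag as delicate do check out: the block of $[0]\oplus D''$ on coordinates $\{1,2\}$ is $\diag(0,\,d_1+d_2)$, hence non-scalar exactly when $d_1+d_2\ne 0$, and the unipotent-sliding argument for non-scalar $2\times 2$ matrices is sound. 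What your version buys is independence from any explicit formula for the similarity (at the price of the extra non-scalar-block lemma); what the paper's version buys is a two-line explicit conjugation that transfers verbatim to the $\ell^1$ setting of Theorem~\ref{thm:finite-rank-idempotent-diagonals}.
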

Since $\trace D\in\{1,\ldots,n-1\}$ for any nonzero, non-identity idempotent matrix (as is well-known, see for instance Lemma~\ref{lem:idempotent-decomp}), this theorem says that this is the only requirement for a sequence to appear as the diagonal of some idempotent. 

Giol, Kovalev, Larson, Nguyen and Tener were interested in this result because of its relevance to frame theory. 
Because of a similar frame-theoretic question (characterizing inner products of dual frame pairs) Jasper asked for a characterization of diagonals of idempotents in $B(H)$. 
Such a result would simultaneously be an extension of the previous two theorems. 
For a key test case, Jasper posed to us the following two operator-theoretic questions (private communication, May 2013 \cite{Jas-2013}):

\begin{question}
  \label{que:ell1-diagonal-finite-rank}
  If an idempotent has a basis in which its diagonal is absolutely summable, is it finite rank?
\end{question}

\begin{question}
  \label{que:zero-diagonal-finite-rank}
  If an idempotent has a basis in which its diagonal consists solely of zeros (i.e., is a zero-diagonal operator in the terminology of Fan \cite{Fan-1984-TotAMS}), is it finite rank?
\end{question}

If we restrict the idempotents to be selfadjoint (i.e., projections), then they are positive operators and the answer to each question is certainly affirmative since the trace is preserved under conjugation by a unitary operator (i.e., a change of basis). 
In fact, for projections, having an absolutely summable (or even summable) diagonal is a characterization of those projections with finite rank since $\rank P = \trace P$.
Moreover, the only projection with a zero diagonal is the zero operator for this same reason.
Hence, a negative answer to either of these questions for the entire class of idempotents would be a notable departure from the case of projections, and would therefore suggest that the classification of their diagonals is potentially harder than one might na\"{i}vely expect. 

As it turns out, Larson constructed a nonzero (and even necessarily infinite rank) idempotent  that lies in a continuous nest algebra which has zero diagonal with respect to this nest \cite[Proof of Theorem 3.7]{Lar-1985-AoMSS}. 
An operator $T$ has zero diagonal \emph{with respect to the nest} if $P_{\lambda}TP_{\lambda}=0$ for some linearly ordered set of projections $\{P_{\lambda}\}_{\lambda\in\Lambda}$ inside the nest such that with respect to the decomposition of the identity $I=\sum_{\Lambda}P_{\lambda}$ every element of the nest is block upper-triangular.
However, the existence of an idempotent with zero diagonal with respect to a nest algebra certainly depends on the order type of the nest to some extent. 
For example, the nest algebra consisting of the upper triangular matrices with respect to some basis $\{e_n\}_{n\in\mathbb{N}}$ for $H$ has order type $\omega$ (the first infinite ordinal), and simple computations show that the only idempotent with zero diagonal inside this nest algebra is the zero operator. 

Once we leave the realm of nest algebras, we can ask two questions:
\begin{itemize}
\item Which idempotents are zero-diagonal?
\item Which idempotents have an absolutely summable diagonal?
\end{itemize} 
As it turns out, both of these questions have the same answer, which we provide in Theorem~\ref{thm:idempotent-zero-diagonal}. 
Before we state this theorem, we expound slightly on the methods involved. 

The techniques for analyzing diagonals of non-selfadjoint operators seem to differ greatly from those used for selfadjoint operators. 
For example, the techniques used in determining diagonals of selfadjoint operators often rely heavily on majorization and keeping track of the explicit changes of the basis (or equivalently, the unitary operators) involved in the construction. 
In contrast, the Toeplitz--Hausdorff Theorem, that the numerical range $W(T)$ of a bounded operator $T$ is convex, is one of the central tools in the work of Fan, Fong and Herrero \cite{Fan-1984-TotAMS}, \cite{FF-1994-PotAMS}, \cite{FFH-1987-PotAMS} to determine diagonals of non-selfadjoint operators. 
Indeed, they frequently use the nonconstructive version of the Toeplitz--Hausdorff Theorem despite the existence of constructive versions in which a formula is specified for the vector yielding the prescribed value of the quadratic form.

The Fan, Fong and Herrero results relevant to us here are restated below.
The first is an infinite dimensional analogue of the finite dimensional result that an $n\times n$ matrix has trace zero if and only if it is zero-diagonal. 

\begin{theorem}[\protect{\cite[Theorem 1]{Fan-1984-TotAMS}}]
  \label{thm:subsequence-zero-diagonalizability}
  If $T\in B(H)$ and there exists some basis $\{e_j\}_{j=1}^\infty$ for $H$ for which the partial sums
  \begin{equation*} s_n \coloneqq{} \sum_{j=1}^n (Te_j,e_j) \end{equation*}
have a subsequence converging to zero, then $T$ is zero-diagonal. 
\end{theorem}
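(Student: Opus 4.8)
The plan is to use the hypothesis to pass to a convenient basis, and then to split off, one at a time, unit vectors on which the form $(T\,\cdot\,,\cdot)$ vanishes. \emph{Normalizing the basis.} Passing to a subsequence we may assume $\abs{s_{n_k}}<2^{-k}$, with $n_0\coloneqq 0$; put $B_k\coloneqq\{n_{k-1}+1,\dots,n_k\}$, so that $\tau_k\coloneqq\sum_{j\in B_k}(Te_j,e_j)=s_{n_k}-s_{n_{k-1}}$ obeys $\sum_k\abs{\tau_k}<\infty$. The finite-dimensional Toeplitz--Hausdorff theorem lets one rotate any square matrix $A$ to have constant diagonal equal to its normalized trace: this value is the mean of the eigenvalues of $A$, hence lies in $W(A)$, so one may make the first diagonal entry equal to it, and the same value is the normalized trace of the remaining corner, and so on. Applying this inside $\spans\{e_j:j\in B_k\}$ makes the diagonal of $T$ on $B_k$ constantly $\tau_k/\abs{B_k}$, and doing this in every block produces a basis in which the diagonal of $T$ tends to $0$ and, because $\sum_k\abs{\tau_k}<\infty$, so do the partial sums of the diagonal. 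Relabel: henceforth $d_j\coloneqq(Te_j,e_j)\to 0$ and $s_n\to 0$.

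\emph{Splitting off vectors with zero form.} We construct orthonormal $f_1,f_2,\dots$ with $(Tf_t,f_t)=0$ such that each $R_t\coloneqq H\ominus\spans\{f_1,\dots,f_t\}$ carries an orthonormal basis whose $T$-diagonal tends to $0$ with partial sums tending to $0$ (take $R_0=H$ with its current basis). Suppose $R_t$ has such a basis $\{g_j\}$. If $(Tg_j,g_j)\ge 0$ for all $j$, then the partial sums are nondecreasing and tend to $0$, so every $(Tg_j,g_j)=0$ and $\{f_1,\dots,f_t\}\cup\{g_j\}$ is a zero-diagonal basis of $H$; the case of all $(Tg_j,g_j)\le 0$ is symmetric. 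Otherwise two diagonal entries have opposite signs, so $0$ lies on the segment joining them, whence by Toeplitz--Hausdorff $0\in W$ of the compression of $T$ to $\spans\{g_1,\dots,g_M\}$ for all large $M$; enlarging $M$ we may also make $\abs{\sum_{j\le M}(Tg_j,g_j)}$ as small as we wish. Pick a unit $f_{t+1}\in\spans\{g_1,\dots,g_M\}$ with $(Tf_{t+1},f_{t+1})=0$, and let the basis of $R_{t+1}$ be any orthonormal basis of $\spans\{g_1,\dots,g_M\}\ominus\mathbb{C}f_{t+1}$ followed by $g_{M+1},g_{M+2},\dots$; its diagonal still tends to $0$, and its partial sums do too because past the first $M-1$ of them they equal $\sum_{j\le n}(Tg_j,g_j)\to 0$.

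\emph{Finishing, and the obstacle.} If the process terminates, we are done; otherwise set $\mathcal{W}\coloneqq\bigcap_t R_t=\bigl(\overline{\spans}\{f_t:t\ge 1\}\bigr)^{\perp}$, and it remains to show that $T$ is zero-diagonal on $\mathcal{W}$, for then any zero-diagonal basis of $\mathcal{W}$ together with $\{f_t\}$ is a zero-diagonal basis of $H$. This is the crux. One must either arrange that $\{f_t\}$ already spans $H$ (so $\mathcal{W}=0$), or prove directly that $(Tw,w)=0$ for \emph{every} unit vector $w\in\mathcal{W}$, which forces every orthonormal basis of $\mathcal{W}$ to be zero-diagonal. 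I would attempt the latter by dovetailing the construction so that at step $t$ the vector $f_{t+1}$ is drawn from an initial segment of the $g_j$'s long enough to absorb the first original basis vector not yet in $\spans\{f_1,\dots,f_t\}$ up to an error summable over $t$; the partial-sum normalization from the first step should then descend to $\mathcal{W}$ in the strong form that the $T$-form vanishes on all of $\mathcal{W}$. Making this dovetailing precise --- controlling exactly what survives into $\mathcal{W}$ --- is the technical heart of the argument, the rest being bookkeeping with the numerical range.
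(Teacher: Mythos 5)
Your first (normalization) step is sound and is essentially the standard reduction: blocking between the indices $n_k$ and averaging each block via Corollary~\ref{cor:trace-nlambda-diagonal-lambda} legitimately produces a basis in which the diagonal entries and the partial sums both tend to $0$ (compare the proof of Proposition~\ref{prop:sum-iff-abs-sum-diag}). The problem is the second step, and you have correctly identified it yourself: the greedy extraction of unit vectors $f_t$ with $(Tf_t,f_t)=0$ gives no control over whether $\{f_t\}$ exhausts $H$. Since each $f_{t+1}$ is chosen by a nonconstructive appeal to Toeplitz--Hausdorff, the $f_t$ can perfectly well accumulate in a proper subspace, and on the residual $\mathcal{W}=\bigcap_t R_t$ you have nothing. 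Your fallback --- proving $(Tw,w)=0$ for \emph{every} unit $w\in\mathcal{W}$ --- is hopeless except when $\mathcal{W}=0$: by polarization it would force the compression $P_{\mathcal{W}}TP_{\mathcal{W}}$ to vanish identically, which is far stronger than zero-diagonality and is not implied by any of your hypotheses. The proposed ``dovetailing'' is not an argument but a restatement of what is missing. (A secondary, fixable issue: your dichotomy ``all entries $\ge 0$, all $\le 0$, or two of opposite sign'' presumes real diagonal entries; for complex entries one must argue separately, e.g.\ by a separation argument, that $0$ lies in the convex hull of finitely many entries unless all entries vanish.)

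The known way to close this gap --- Fan's own, reflected in this paper by Lemma~\ref{lem:quantitative-toeplitz-hausdorff} and its bootstrap Lemma~\ref{lem:bootstrap-fan} --- is to make the Toeplitz--Hausdorff step quantitative. At stage $n$ one does not merely extract a zero-form vector from some large finite block; one works with a single ``carrier'' unit vector $f_n$ holding the accumulated trace discrepancy, replaces the pair $(f_{n-1},e_n)$ by a pair $(f_n,b_n)$ with $(Tb_n,b_n)=0$, and, crucially, records the bound $\abs{(f_n,f_{n-1})}^2\le\lambda_n$ on the overlap, where $\lambda_n$ is the convexity coefficient of $0$ in the segment of diagonal entries. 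The condition $\prod_{i\ge n}\lambda_i=0$ (arranged via your normalization, which makes the relevant coefficients bounded away from $1$) is exactly what forces $\snorm{(P-P_{n+k})e_n}\to 0$, i.e.\ forces the residual subspace to be zero. Without tracking these overlaps your construction cannot be completed, so as it stands the proof has a genuine gap at its central step.
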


\begin{definition}[\protect{\cite{FFH-1987-PotAMS}}]
  \label{def:trace-set}
  Let $T\in B(H)$ and let $\mathfrak{e}=\{e_j\}_{j=1}^\infty$ be a basis for $H$.
 Suppose the partial sums $s_n = \sum_{j=1}^n (Te_j,e_j)$ converge to some value $s\in\mathbb{C}$.
 Then we say that $\trace_{\mathfrak{e}} T:=s$ is the \emph{trace of $T$ with respect to the basis $\mathfrak{e}$}.
The \emph{set of traces of $T$}, denoted $R\{\trace T\}$, is then the set of all such traces $\trace_{\mathfrak{e}} T$ as $\mathfrak{e}$ ranges over all orthonormal bases for which $\trace_{\mathfrak{e}} T$ is defined.

Observe that in order to make sense of this definition it is essential both that these trace values are finite and that we must order $\mathfrak{e}$ by $\mathbb{N}$.
\end{definition}

A curious fact about the set $R\{\trace T\}$ from Definition~\ref{def:trace-set} is that it may take on only four different shapes: the plane, a line, a point or the empty set. 
It is no coincidence that these shapes coincide with those obtainable as the limits of convergent rearrangements of a series of complex numbers (i.e., the L\'{e}vy--Steinitz Theorem extending the Riemann Rearrangement Theorem to complex numbers). 

\begin{theorem}[\protect{\cite[Theorem 4]{FFH-1987-PotAMS}}]
  \label{thm:trace-range-shape}
  Suppose $T\in B(H)$. Then there are four possible shapes that $R\{\trace T\}$ can acquire. More specifically, $R\{\trace T\}$ is:
  \begin{enumerate}
  \item the plane $\mathbb{C}$ if and only if for all $\theta\in\mathbb{R}$, $(\Re\, \mathrm{e}^{\mathrm{i}\theta}T)_+\notin\mathcal{C}_1$ (the trace-class);
  \item a line if and only if for some $\theta\in\mathbb{R}$, \\ $(\Re\, \mathrm{e}^{\mathrm{i}\theta}T)_\pm\notin\mathcal{C}_1$ but $(\Im\, \mathrm{e}^{\mathrm{i}\theta}T)\in\mathcal{C}_1$;
  \item a point if and only if $T\in\mathcal{C}_1$;
  \item the empty set $\emptyset$ if and only if for some $\theta\in\mathbb{R}$,  \\ $(\Re\, \mathrm{e}^{\mathrm{i}\theta}T)_+\notin\mathcal{C}_1$ but $(\Re\, \mathrm{e}^{\mathrm{i}\theta}T)_-\in\mathcal{C}_1$.
  \end{enumerate}
\end{theorem}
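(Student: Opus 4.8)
The plan is to reduce to the self-adjoint case and then reassemble, using the elementary observation that if $s=\trace_{\mathfrak e}T$ for a basis $\mathfrak e$ then for every $\theta\in\mathbb R$ the diagonal partial sums of $\Re(\mathrm e^{\mathrm i\theta}T)$ with respect to $\mathfrak e$ converge to $\Re(\mathrm e^{\mathrm i\theta}s)$; hence $\Re(\mathrm e^{\mathrm i\theta}R\{\trace T\})\subseteq R\{\trace\Re(\mathrm e^{\mathrm i\theta}T)\}$. So I would first establish the self-adjoint prototype: for $A=A^{*}$, the set $R\{\trace A\}$ is $\{\trace A\}$ if $A\in\mathcal C_{1}$, is $\emptyset$ if exactly one of $A_{\pm}$ lies in $\mathcal C_{1}$, and is $\mathbb R$ if neither does. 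The first case is unitary invariance of the trace. In the second, in every basis the diagonal partial sums of $A_{+}$ increase to $\trace A_{+}$ and those of $A_{-}$ to $\trace A_{-}$, so if, say, $A_{+}\in\mathcal C_{1}$ and $A_{-}\notin\mathcal C_{1}$ the diagonal partial sums of $A$ tend to $-\infty$ in every basis, and no trace value exists.

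The heart of the matter is the third case: for each $t\in\mathbb R$, build a basis whose diagonal partial sums converge to $t$. I would do this greedily. Having chosen $e_{1},\dots,e_{n}$ with $s_{n}=\sum_{j\le n}(Ae_{j},e_{j})$, put $H_{n}=(\spans\{e_{1},\dots,e_{n}\})^{\perp}$; since $H_{n}$ has finite codimension and $\trace A_{\pm}=\infty$, the compressions $P_{H_{n}}A_{\pm}P_{H_{n}}$ have infinite trace, so there are unit vectors $\xi\in\overline{\range A_{+}}\cap H_{n}$ and $\eta\in\overline{\range A_{-}}\cap H_{n}$ with $(A\xi,\xi)>0>(A\eta,\eta)$. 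Because $\range A_{+}\perp\range A_{-}$ one has $\xi\perp\eta$ and $(A\xi,\eta)=0$, so as $\alpha$ runs over $[0,\pi/2]$ the value $(A(\cos\alpha\,\xi+\sin\alpha\,\eta),\cos\alpha\,\xi+\sin\alpha\,\eta)=\cos^{2}\alpha\,(A\xi,\xi)+\sin^{2}\alpha\,(A\eta,\eta)$ sweeps the whole interval $[(A\eta,\eta),(A\xi,\xi)]\ni 0$. Thus at stage $n$ I may append a unit vector of $H_{n}$ realizing any diagonal value in a nondegenerate interval about $0$; I choose it to move $s_{n}$ toward $t$ by a controlled, shrinking amount. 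Verifying that the procedure is never stuck before reaching $t$ — i.e.\ that the available positive and negative diagonal budgets still sum to $\infty$ along the construction, which forces the partial sums not merely to approach but to settle at $t$ — is the delicate point and, together with its $\mathbb C$-valued refinement below, is the main obstacle of the proof. (Theorem~\ref{thm:subsequence-zero-diagonalizability} disposes of the single value $t=0$ once a basis with a subsequence of partial sums tending to $0$ is in hand, but a general target requires the full construction.)

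For the reassembly: if $T\in\mathcal C_{1}$ then $R\{\trace T\}=\{\trace T\}$, a point, and conversely $T\notin\mathcal C_{1}$ makes some $\Re(\mathrm e^{\mathrm i\theta}T)\notin\mathcal C_{1}$, so $R\{\trace\Re(\mathrm e^{\mathrm i\theta}T)\}$ is $\emptyset$ or $\mathbb R$ and $R\{\trace T\}$ is no single point. If some $\Re(\mathrm e^{\mathrm i\theta}T)$ has exactly one part in $\mathcal C_{1}$ (case (iv)), then $R\{\trace\Re(\mathrm e^{\mathrm i\theta}T)\}=\emptyset$ forces $R\{\trace T\}=\emptyset$. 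If no $\Re(\mathrm e^{\mathrm i\theta}T)$ lies in $\mathcal C_{1}$, then (using $(\Re\,\mathrm e^{\mathrm i\theta}T)_{-}=(\Re\,\mathrm e^{\mathrm i(\theta+\pi)}T)_{+}$) either every such operator has both parts outside $\mathcal C_{1}$ — case (i), where I claim $R\{\trace T\}=\mathbb C$ — or some one of them has exactly one part in $\mathcal C_{1}$, which is case (iv) again. Finally, if $T\notin\mathcal C_{1}$ but $\Re(\mathrm e^{\mathrm i\theta_{0}}T)\in\mathcal C_{1}$ for some $\theta_{0}$, then $\Re T$ and $\Im T$ are linearly dependent modulo $\mathcal C_{1}$; setting $A=\Im(\mathrm e^{\mathrm i\theta_{0}}T)=\Re(\mathrm e^{\mathrm i(\theta_{0}-\pi/2)}T)$, which is self-adjoint, not trace class, and — since we are not in case (iv) — has both parts outside $\mathcal C_{1}$, one gets that $R\{\trace T\}$ is pinned to the line $\{s:\Re(\mathrm e^{\mathrm i\theta_{0}}s)=\trace\Re(\mathrm e^{\mathrm i\theta_{0}}T)\}$ (because $\Re(\mathrm e^{\mathrm i\theta_{0}}T)\in\mathcal C_{1}$ has basis-independent diagonal sum) and fills that entire line, since any basis realizing a prescribed value for $A$ automatically lands on it; after replacing $\theta_{0}$ by $\theta_{0}+\pi/2$ this is precisely condition (ii).

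For the plane claim in case (i), and the surjectivity onto the line in case (ii), I would rerun the greedy construction directly in $\mathbb C$: the numerical range of the compression $P_{H_{n}}TP_{H_{n}}$ is convex by the Toeplitz--Hausdorff theorem, and the hypothesis — that $(\Re\,\mathrm e^{\mathrm i\theta}T)_{+}$ lies outside $\mathcal C_{1}$ for every $\theta$, hence is infinite rank and survives compression to the finite-codimensional $H_{n}$ — forces this numerical range to contain a disk about $0$ (in case (ii), a segment through $0$ transverse to the pinned direction), so stepping toward the target $z$ with controlled, shrinking increments yields a basis with $\trace_{\mathfrak e}T=z$. Mutual exclusivity of the four conditions then needs no separate argument: the conditions are exhaustive by the case analysis just given, each forces its own shape, and a point, a line, the plane, and the empty set are pairwise distinct.
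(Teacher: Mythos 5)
First, a caveat about the comparison itself: the paper offers no proof of this theorem --- it is quoted as \cite[Theorem 4]{FFH-1987-PotAMS} and used as a black box --- so your proposal can only be judged on its own terms. Your architecture is sound: the inclusion $\Re\big(\mathrm e^{\mathrm i\theta}R\{\trace T\}\big)\subseteq R\{\trace \Re(\mathrm e^{\mathrm i\theta}T)\}$, the self-adjoint trichotomy, and the exhaustive, mutually exclusive case split (so that ``each condition forces its shape'' yields both directions of all four equivalences at once) are all correct. The necessity halves are essentially complete as written: the point case is unitary invariance of the trace; the empty case follows because the monotone partial sums of $(\Re\,\mathrm e^{\mathrm i\theta}T)_{\pm}$ force divergence in every basis; and the pinning to a line when some $\Re(\mathrm e^{\mathrm i\theta_0}T)\in\mathcal C_1$ works because a trace-class operator has an absolutely summable diagonal in every basis, so convergence of the $A$-partial-sums alone already forces $\trace_{\mathfrak e}T$ to exist and lie on the line.

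The genuine gap is the sufficiency half, which is the actual content of the theorem: producing, for each admissible target, an orthonormal \emph{basis} whose partial sums \emph{converge} to that target. Two specific difficulties are left open. (a) Completeness: a greedy choice of unit vectors in $H_n$ need never exhaust $H$; one must interleave the construction with the absorption of a fixed basis of $H$, and each absorbed finite block perturbs the running sum by an amount that must then be paid back --- you never mention this. (b) Convergence with drift: you flag the real-valued convergence issue yourself as ``the main obstacle,'' but in case (i) the situation is worse than you indicate, because each step taken to move the partial sum in direction $\theta$ contributes an uncontrolled component in the direction $\theta+\nicefrac{\pi}{2}$; managing both coordinates simultaneously is precisely the L\'evy--Steinitz-type difficulty that makes the theorem nontrivial, and ``controlled, shrinking increments'' does not resolve it. A remark that would shorten your case (i) considerably: to realize a target $t\in\mathbb C$ it suffices to apply Theorem~\ref{thm:subsequence-zero-diagonalizability} to $T-t\,(e\otimes e)$, a rank-one (hence trace-class) perturbation which still satisfies condition (i) by Lemma~\ref{lem:positive-ideal-perturb}; a zero-diagonal basis for it gives $\trace_{\mathfrak e}T=t$. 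This reduces the general target to subsequential convergence to $0$, which is far more forgiving than exact convergence to an arbitrary point --- though the completeness issue (a) still has to be faced there.
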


In fact, their proof of Theorem~\ref{thm:trace-range-shape} shows that given $T\in B(H)$ there exists a basis $\mathfrak{e}$ for $H$, $\mathfrak{e}$ ordered by $\mathbb{N}$, for which every element of $R\{\trace T\}$ can be obtained from a basis which is a permutation of $\mathfrak{e}$. 
For the next theorem Fan--Fong utilize the previous two theorems to provide intrinsic (i.e., basis independent) criteria for when a bounded operator is zero-diagonal.

\begin{theorem}[\protect{\cite{FF-1994-PotAMS}}]
  \label{thm:zero-diagonal-trace-theta}
  An operator $T$ is zero-diagonal if and only if for all $\theta\in\mathbb{R}$,
\begin{equation*} \trace(\Re\, \mathrm{e}^{\mathrm{i}\theta}T)_+=\trace(\Re\, \mathrm{e}^{\mathrm{i}\theta}T)_-. \end{equation*}
\end{theorem}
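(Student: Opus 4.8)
The plan is to reduce the theorem to a statement about the set of traces $R\{\trace T\}$ of Definition~\ref{def:trace-set} and then read the answer off from the four cases of Theorem~\ref{thm:trace-range-shape}; I treat the infinite-dimensional case, as in finite dimensions the assertion is the classical fact that a matrix is zero-diagonal precisely when its trace vanishes. The starting point is the equivalence
\[ T \text{ is zero-diagonal} \iff 0 \in R\{\trace T\}. \]
Indeed, if $T$ is zero-diagonal in a basis $\mathfrak e$, the partial sums of its diagonal in $\mathfrak e$ are identically $0$, so $\trace_{\mathfrak e} T = 0$; conversely, if $0 \in R\{\trace T\}$ then some basis has partial sums converging to $0$, hence a subsequence converging to $0$, so Theorem~\ref{thm:subsequence-zero-diagonalizability} applies. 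Thus it suffices to prove that $0 \in R\{\trace T\}$ if and only if $\trace(\Re\,\mathrm e^{\mathrm i\theta}T)_+ = \trace(\Re\,\mathrm e^{\mathrm i\theta}T)_-$ for every $\theta \in \mathbb R$.

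For the forward direction, note that if $T$ is zero-diagonal in a basis $\mathfrak e$ then so is the selfadjoint operator $S_\theta := \Re\,\mathrm e^{\mathrm i\theta}T$, because $(S_\theta e_j, e_j) = \Re\bigl(\mathrm e^{\mathrm i\theta}(Te_j,e_j)\bigr) = 0$; hence $0 \in R\{\trace S_\theta\}$, and this set is nonempty and real-valued (as $S_\theta$ is selfadjoint). It is not the plane, since at $\theta = \tfrac\pi2$ we have $\Re(\mathrm i S_\theta) = 0 \in \mathcal C_1$, so the condition of case (i) of Theorem~\ref{thm:trace-range-shape} fails; thus $R\{\trace S_\theta\}$ is a point or a real line. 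If it is a point, then $S_\theta \in \mathcal C_1$ and that point is $\trace S_\theta$, which must be $0$; since $(S_\theta)_\pm \le |S_\theta| \in \mathcal C_1$, this gives $\trace(S_\theta)_+ = \trace(S_\theta)_-$. If it is a line, it is all of $\mathbb R$, so $S_\theta \notin \mathcal C_1$, and neither $(S_\theta)_+$ nor $(S_\theta)_-$ is trace class — otherwise the condition of case (iv), applied at angle $0$ or $\pi$, would make $R\{\trace S_\theta\}$ empty — so $\trace(S_\theta)_+ = \infty = \trace(S_\theta)_-$. In every case the desired equality holds for all $\theta$.

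For the converse, assume the equality for every $\theta$. First I rule out the empty-set case: if the condition of case (iv) held for some $\phi$, then $(\Re\,\mathrm e^{\mathrm i\phi}T)_- \in \mathcal C_1$ while $(\Re\,\mathrm e^{\mathrm i\phi}T)_+ \notin \mathcal C_1$, forcing $\trace(\Re\,\mathrm e^{\mathrm i\phi}T)_- < \infty = \trace(\Re\,\mathrm e^{\mathrm i\phi}T)_+$, contrary to hypothesis. So $R\{\trace T\}$ is the plane, a line, or a point. In the first case $0$ obviously belongs to it. If it is a point, then $T \in \mathcal C_1$ and the hypothesis at $\theta = 0$ and $\theta = \tfrac\pi2$ — using $\Re(\mathrm i T) = -\Im T$ and $\Re T, \Im T \in \mathcal C_1$ — yields $\trace(\Re T) = \trace(\Im T) = 0$, so $\trace T = 0$ and $0 \in R\{\trace T\} = \{\trace T\}$. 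In the remaining (line) case, case (ii) supplies an angle $\phi$ with $\Im(\mathrm e^{\mathrm i\phi}T) \in \mathcal C_1$; since a trace-class operator has the same absolutely summable diagonal sum in every basis, every element of $R\{\trace\,\mathrm e^{\mathrm i\phi}T\}$ has imaginary part $c := \trace\Im(\mathrm e^{\mathrm i\phi}T)$, so this rotated line is exactly $\{z : \Im z = c\}$. The hypothesis at $\theta = \phi + \tfrac\pi2$, where $\Re(\mathrm e^{\mathrm i(\phi+\pi/2)}T) = -\Im(\mathrm e^{\mathrm i\phi}T) \in \mathcal C_1$, gives $\trace\bigl(-\Im(\mathrm e^{\mathrm i\phi}T)\bigr) = 0$, i.e.\ $c = 0$; hence $0 \in R\{\trace\,\mathrm e^{\mathrm i\phi}T\} = \mathrm e^{\mathrm i\phi}R\{\trace T\}$, so $0 \in R\{\trace T\}$.

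The main obstacle I anticipate is the bookkeeping in the line case of the converse: one must know that the set of traces of $\mathrm e^{\mathrm i\phi}T$ lies on a horizontal line — which rests on basis-invariance of the trace-class diagonal sum — and then extract the vanishing of its imaginary coordinate from the hypothesis at the shifted angle $\phi + \tfrac\pi2$. The rest is a matter of matching each situation against the four mutually exclusive cases of Theorem~\ref{thm:trace-range-shape}, using only that a positive operator of finite trace is trace class and that the positive and negative parts of a trace-class selfadjoint operator are trace class.
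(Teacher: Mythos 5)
Your argument is correct. Note that the paper itself gives no proof of this statement: it is quoted from Fan--Fong \cite{FF-1994-PotAMS} and the authors explicitly remark that they neither use nor cite it elsewhere, so there is no in-paper proof to compare against. Your derivation --- reducing zero-diagonality to $0\in R\{\trace T\}$ via Theorem~\ref{thm:subsequence-zero-diagonalizability}, then running both directions through the four mutually exclusive cases of Theorem~\ref{thm:trace-range-shape} (with the careful handling of the line case via the basis-invariance of $\trace \Im(\mathrm{e}^{\mathrm{i}\phi}T)$ and the hypothesis at the shifted angle $\phi+\nicefrac{\pi}{2}$) --- is sound and is essentially the route Fan and Fong themselves take, building on the Fan--Fong--Herrero classification of $R\{\trace T\}$.
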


We neither use nor cite this theorem elsewhere in the paper.
 However, it seems interesting to include it because it shares its intrinsic nature  with our Theorem~\ref{thm:idempotent-zero-diagonal}(i).

Later we will use Theorem~\ref{thm:trace-range-shape} to prove our first main theorem:

\begin{restatable*}{theorem}{firstmaintheorem}
  \label{thm:idempotent-zero-diagonal}
  For $D\in B(H)$ an infinite rank idempotent the following are equivalent:
  \begin{enumerate}
  \item \label{item:mthm1} $D$ is not a Hilbert--Schmidt perturbation of a projection;
  \item \label{item:mthm2} the nilpotent part of $D$ is not Hilbert--Schmidt;
  \item \label{item:mthm3} $R\{\trace D\}=\mathbb{C}$;
  \item \label{item:mthm4} $D$ is zero-diagonal;
  \item \label{item:mthm5} $D$ has an absolutely summable diagonal;
  \item \label{item:mthm6} $D$ has a summable diagonal (i.e., $R\{\trace D\}\not=\emptyset$).
  \end{enumerate}
\end{restatable*}

We have not yet defined the \emph{nilpotent part} of an idempotent $D$, but it is a natural object defined in Lemma~\ref{lem:idempotent-decomp} that gives a canonical decomposition for idempotents. 
It turns out that \ref{item:mthm5} and \ref{item:mthm6} of Theorem~\ref{thm:idempotent-zero-diagonal} are actually equivalent for \emph{any} bounded operator, not merely idempotents (see Proposition~\ref{prop:sum-iff-abs-sum-diag}).

Our next main theorem answers Jasper's frame theory problem which, as equivalently stated earlier, characterizes diagonals of the class of idempotents.
The equivalence of these two problems was originally described to us by Jasper, but a fairly concise explanation can be found on the MathOverflow post: \url{http://mathoverflow.net/q/132592}.

\begin{restatable*}{theorem}{secondmaintheorem}
  \label{thm:idempotent-diagonals-ell-infty}
  Every $\langle d_n \rangle\in\ell^\infty$ admits an idempotent $D\in B(H)$ whose diagonal is $\langle d_n \rangle$ with respect to a basis $\mathfrak{b}$. 
\end{restatable*}

Herein we use $\mathfrak{b}$ to denote a target basis, whereas we use $\mathfrak{e}$ to denote an arbitrary basis.
While Theorem~\ref{thm:idempotent-diagonals-ell-infty} can be viewed as an extension of Theorem~\ref{thm:idempotent-matrix-diagonals}, so also can our last main theorem.

\begin{restatable*}{theorem}{thirdmaintheorem}
  \label{thm:finite-rank-idempotent-diagonals}
  The diagonals of the class of nonzero finite rank idempotents consist precisely of those absolutely summable sequences whose sum is a positive integer. 
\end{restatable*}

\section{ZERO-DIAGONAL IDEMPOTENTS}

We begin with a canonical decomposition of idempotents into $2\times 2$ operator matrices.

\begin{lemma}
  \label{lem:idempotent-decomp}
  Let $D^2=D\in B(H)$ be an idempotent. 
  Then with respect to the decomposition $H=\ker^\perp D\oplus \ker D$, $D$ has the following block matrix form:
  \begin{equation*} 
  D = 
  \begin{pmatrix}
    I & 0 \\
    T & 0 \\
  \end{pmatrix},
  \end{equation*}
  where $I \in B(\ker^\perp D)$ is the identity operator and $T\in B(\ker^\perp D,\ker D)$ is a bounded operator which we call the \textbf{nilpotent part} of the idempotent $D$, short for the corner of the nilpotent operator 
  \(
  \begin{psmallmatrix}
    0 & 0 \\
    T & 0 \\
  \end{psmallmatrix}
  \). 
\end{lemma}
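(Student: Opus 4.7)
The plan is to work with the orthogonal projections $P_1$ and $P_2$ onto $\ker^\perp D$ and $\ker D$, respectively, and compute the four blocks $P_i D P_j$ of the $2 \times 2$ operator matrix of $D$ in the decomposition $H = \ker^\perp D \oplus \ker D$. The right column vanishes immediately: since $P_2$ is the projection onto $\ker D$, we have $D P_2 = 0$, and so $P_1 D P_2 = 0$ and $P_2 D P_2 = 0$. The lower-left block $P_2 D P_1$, viewed as a bounded operator $\ker^\perp D \to \ker D$, is simply named $T$, and nothing further needs to be verified about it. So the entire content of the lemma is to show that the upper-left block $P_1 D P_1$ equals $P_1$, i.e.\ that $P_1 D x = x$ for every $x \in \ker^\perp D$.

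For this key identity, I would fix $x \in \ker^\perp D$ and decompose $D x = u + v$ orthogonally with $u = P_1 D x \in \ker^\perp D$ and $v = P_2 D x \in \ker D$. Using $D^2 = D$ together with $D v = 0$ yields $D u = u + v$, from which
\[
D(x - u) = D x - D u = (u+v) - (u+v) = 0.
\]
Hence $x - u \in \ker D$; but $x - u \in \ker^\perp D$ as well, so $x - u = 0$ and $u = x$.

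The one point to keep in mind is that $\range D \ne \ker^\perp D$ in general (equality would force $D$ to be a projection), so the familiar identity $D|_{\range D} = I$ cannot be applied directly to conclude $P_1 D P_1 = P_1$. This is the only subtlety; the idempotent relation bridges the two orthogonal summands precisely via the computation above. Beyond this, boundedness of $T$ is automatic from boundedness of $D$, so no further analytic input is needed.
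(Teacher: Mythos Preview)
Your proof is correct and follows essentially the same argument as the paper: both identify the upper-left block as the only nontrivial part, decompose $Dx$ into its $\ker^\perp D$ and $\ker D$ components, apply $D^2=D$ together with $Dv=0$ to obtain $D(x-u)=0$, and then conclude $x=u$ from $\ker^\perp D \cap \ker D = \{0\}$ (the paper phrases this last step as ``$D$ acts one-to-one on $\ker^\perp D$''). Your added remark that $\range D \ne \ker^\perp D$ in general is a helpful clarification not made explicit in the paper.
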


\noindent Note that the term `nilpotent part' is a natural slight abuse of language in that $T$ itself is not nilpotent; $T^2$ is not even defined.

\begin{proof}
  The only non-obvious fact we must prove is that the upper left-hand corner of $D$ is the identity on the compression to $\ker^\perp D$.
 To verify this let $x \oplus 0 \in \ker^\perp D$ be arbitrary and let $D(x \oplus 0) = y \oplus z$.
 Then because $D$ is idempotent one has
\begin{equation*} D(x \oplus 0) = D^2(x \oplus 0) = D(y \oplus z) = D(y \oplus 0). \end{equation*}
Since $x\oplus 0, y\oplus 0\in \ker^\perp D$ on which $D$ acts one-to-one, $x=y$. 
\end{proof}

An important stepping stone to our first main theorem is the following proposition in which the idempotent acts on $H\oplus H$ and its nilpotent part is normal.

\begin{proposition}
  \label{prop:stepping-stone}
  Suppose $H$ is separable infinite dimensional and the idempotent $D\in B(H \oplus H)$ has the respective block matrix form 
\begin{equation*} 
D =
\begin{pmatrix}
  I & 0 \\
  T & 0 \\
\end{pmatrix}
\end{equation*}
where $T\in B(H)$ is normal.
Then
\begin{enumerate}
\item \label{item:1} both $(\Im\, D)_\pm = (\Re\, \mathrm{e}^{\nicefrac{\mathrm{i}\pi}{2}}D)_\pm \in \mathcal{C}_1$ if and only if $T \in \mathcal{C}_1$;
\item \label{item:2} $(\Re\, \mathrm{e}^{\mathrm{i}\theta}D)_+ \notin \mathcal{C}_1$ for $\nicefrac{-\pi}{2}<\theta<\nicefrac{\pi}{2}$;
\item \label{item:3} $(\Re\, \mathrm{e}^{\mathrm{i}\theta}D)_- \in \mathcal{C}_1$ for $\nicefrac{-\pi}{2}<\theta<\nicefrac{\pi}{2}$ if and only if $T \in \mathcal{C}_2$.
\end{enumerate}
\end{proposition}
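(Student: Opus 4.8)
The plan is to put $\Re(\mathrm e^{\mathrm i\theta}D)$, for each $\theta$, into a single simultaneously (block-)diagonalizable $2\times 2$ operator matrix, exploiting the normality of $T$, and then to extract (i)--(iii) from the diagonalization by elementary functional calculus. First I would compute the real part explicitly: from $D=\begin{psmallmatrix}I&0\\T&0\end{psmallmatrix}$ one gets $\Re(\mathrm e^{\mathrm i\theta}D)=\begin{psmallmatrix}\cos\theta\,I&\frac12\mathrm e^{-\mathrm i\theta}T^*\\\frac12\mathrm e^{\mathrm i\theta}T&0\end{psmallmatrix}$. Taking the polar decomposition $T=U|T|$ --- where $U$ may be chosen \emph{unitary} precisely because normality of $T$ forces $\ker T=\ker T^*$ --- and conjugating by the unitary $\diag(I,\mathrm e^{\mathrm i\theta}U)$ turns both off-diagonal corners into $\tfrac12|T|$. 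Since unitary conjugation preserves $\mathcal C_1$-membership of positive and negative parts, it suffices to analyze $\widetilde A_\theta:=\begin{psmallmatrix}\cos\theta\,I&\frac12|T|\\\frac12|T|&0\end{psmallmatrix}$, all of whose entries lie in the abelian von Neumann algebra generated by $|T|$.

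Next I would diagonalize $\widetilde A_\theta$ by the matrix-valued Borel functional calculus of $|T|$: the scalar matrix $\begin{psmallmatrix}c&t/2\\t/2&0\end{psmallmatrix}$ with $t\ge 0$ has eigenvalues $\mu_\pm(t)=\tfrac12\bigl(c\pm\sqrt{c^2+t^2}\bigr)$ with orthonormal eigenvectors that can be chosen unitary-valued and Borel (indeed continuous once the degenerate fibre $t=0$ is assigned by hand). Applying this to $|T|$ gives a unitary $W$ on $H\oplus H$ with $W^*\widetilde A_\theta W=\mu_+(|T|)\oplus\mu_-(|T|)$, where $\mu_\pm(|T|)=\tfrac12\bigl(\cos\theta\,I\pm(\cos^2\theta\,I+|T|^2)^{1/2}\bigr)$. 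For $-\tfrac\pi2\le\theta\le\tfrac\pi2$ one has $\cos\theta\ge 0$, hence $\mu_+(|T|)\ge 0$ and $\mu_-(|T|)\le 0$, so $(\Re\,\mathrm e^{\mathrm i\theta}D)_+\cong\mu_+(|T|)\oplus 0$ and $(\Re\,\mathrm e^{\mathrm i\theta}D)_-\cong 0\oplus\bigl(-\mu_-(|T|)\bigr)$.

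From here each item is a short computation. For (i) I would set $\theta=\pm\tfrac\pi2$, so $\cos\theta=0$ and $\mu_\pm(|T|)=\pm\tfrac12|T|$; hence both parts lie in $\mathcal C_1$ iff $|T|\in\mathcal C_1$ iff $T\in\mathcal C_1$, and since $\Re(\mathrm e^{\mathrm i\pi/2}D)=-\Im D$ the same holds for $(\Im D)_\pm$ (the $\pm$ labels merely swap). For (ii), with $c:=\cos\theta>0$, operator monotonicity of the square root gives $\mu_+(|T|)\ge cI$, which is bounded below on the infinite-dimensional $H$ and hence not even compact, so $(\Re\,\mathrm e^{\mathrm i\theta}D)_+\notin\mathcal C_1$. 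For (iii), again $c>0$, and from the commuting-operator identity $(c^2I+|T|^2)^{1/2}-cI=|T|^2\bigl((c^2I+|T|^2)^{1/2}+cI\bigr)^{-1}$ together with $2cI\le(c^2I+|T|^2)^{1/2}+cI\le\bigl(\sqrt{c^2+\norm{T}^2}+c\bigr)I$ --- where boundedness of $T$ is used --- the operator $-\mu_-(|T|)$ is sandwiched between two positive constant multiples of $|T|^2$; hence $(\Re\,\mathrm e^{\mathrm i\theta}D)_-\in\mathcal C_1$ iff $|T|^2\in\mathcal C_1$ iff $|T|\in\mathcal C_2$ iff $T\in\mathcal C_2$.

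Two shortcuts are worth recording. Item (ii) needs no normality: $\Re(\mathrm e^{\mathrm i\theta}D)$ has quadratic form $\cos\theta\norm{\eta}^2$ on the infinite-dimensional subspace $\{\eta\oplus 0\}$, which already forces its positive part to be non-compact. Item (i) likewise needs no normality: $(\Im D)^2=\tfrac14\diag(|T|^2,|T^*|^2)$ is block-diagonal, so $|\Im D|=\tfrac12\diag(|T|,|T^*|)$ and $(\Im D)_\pm=\tfrac14\begin{psmallmatrix}|T|&\pm\mathrm iT^*\\\mp\mathrm iT&|T^*|\end{psmallmatrix}$ is trace-class exactly when $T$ is. The step I expect to be the main obstacle is the diagonalization of $\widetilde A_\theta$ in the second paragraph: one must set up the matrix-valued Borel functional calculus of $|T|$ carefully, check that the eigenvector assignment $t\mapsto W(t)$ can be taken unitary-valued and Borel (the fibre $t=0$, where the two eigenvalues may collide, being handled separately), and verify that this $W$ truly conjugates $\widetilde A_\theta$ to $\mu_+(|T|)\oplus\mu_-(|T|)$. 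Everything downstream is then routine functional calculus and the elementary estimates above.
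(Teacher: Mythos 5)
Your proposal is correct and follows essentially the same route as the paper: reduce to a fibrewise $2\times 2$ eigenvalue computation, glue the eigenvectors into a unitary via the Borel functional calculus to block-diagonalize $\Re(\mathrm{e}^{\mathrm{i}\theta}D)$, and then read off (i)--(iii) from the bounds $\mu_+\ge\cos\theta$ and $C_1|T|^2\le-\mu_-\le C_2|T|^2$. The only (harmless) cosmetic difference is that you first pass to $|T|$ via the polar decomposition so the functional calculus runs over $[0,\snorm{T}]$, whereas the paper applies the Borel functional calculus directly to the normal operator $T$ on $\spec T\subseteq\mathbb{C}$; your added observations that (i) and (ii) need no normality are correct but not needed.
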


\begin{proof}
  The core of the proof is an analysis of the $2\times 2$ case followed by a straightforward application of the Borel functional calculus to the operator case.
  
  For $z\in \mathbb{C}$, let $A_z\in M_2(\mathbb{C})$ be given by
  \begin{equation*} 
  A_z \coloneqq{}
  \begin{pmatrix}
    1            & 0             \\
    z            & 0             \\
  \end{pmatrix}.
  \end{equation*}
  Then fixing $-\nicefrac{\pi}{2}<\theta\le\nicefrac{\pi}{2}$,
  \begin{equation*} 2(\Re\, \mathrm{e}^{\mathrm{i}\theta}A_z) = \mathrm{e}^{\mathrm{i}\theta}A_z + \mathrm{e}^{-\mathrm{i}\theta}A_z^* = 
  \begin{pmatrix}
    2\cos\theta  & \mathrm{e}^{-\mathrm{i}\theta}\bar z \\
    \mathrm{e}^{\mathrm{i}\theta}z & 0             \\
  \end{pmatrix}, 
  \end{equation*}
  which has characteristic polynomial $\det(\lambda-2(\Re\, \mathrm{e}^{\mathrm{i}\theta}A_z))=\lambda^2-2\cos\theta\lambda -\sabs{z}^2$. 
Hence the selfadjoint matrix $2(\Re\, \mathrm{e}^{\mathrm{i}\theta}A_z)$ has eigenvalues which depend on $z$ by 
\begin{equation}
  \label{eq:eigenvalues}
  \lambda_\pm(z) = \cos\theta\pm\sqrt{\cos^2\theta+\sabs{z}^2}.
  \end{equation}

  When $z\not=0$, normalized eigenvectors corresponding to these eigenvalues are
  \begin{equation}
    \label{eq:eigenvectors}
    x_+(z) =
    \begin{pmatrix}
      \frac{\lambda_+(z)}{\sqrt{\lambda_+^2(z)+\sabs{z}^2}} \\
      \frac{\mathrm{e}^{\mathrm{i}\theta}z}{\sqrt{\lambda_+^2(z)+\sabs{z}^2}} \\
    \end{pmatrix}
    \quad \text{and} \quad 
    x_-(z) =
    \begin{pmatrix}
      \frac{\lambda_-(z)}{\sqrt{\lambda_-^2(z)+\sabs{z}^2}} \\
      \frac{\mathrm{e}^{\mathrm{i}\theta}z}{\sqrt{\lambda_-^2(z)+\sabs{z}^2}} \\
    \end{pmatrix}
    .
  \end{equation}
  On the other hand, when $z=0$, the normalized eigenvectors are just the standard basis $x_+(0) =
  \begin{psmallmatrix}
    1 \\ 
    0 \\
  \end{psmallmatrix}
  $ and 
  $x_-(0) =
  \begin{psmallmatrix}
    0 \\ 
    1 \\
  \end{psmallmatrix}
  $. 
  
  We now return to the operator case. 
  Since $T\in B(H)$ is normal, the Borel functional calculus provides a $*$-homomorphism $\Phi:\mathcal{B}(\spec(T))\to W^*(T)$ from the bounded Borel functions on the spectrum of $T$ to the abelian von Neumann algebra generated by $T$ for which $\cramped{\id_{\spec T} \xmapsto{\Phi} T}$, where the identity function on $\spec T$ is $\cramped{\id_{\spec T}}(z) = z$ \cite[Theorem 5.2.9]{Kadison.Ringrose-1997a}.
  Moreover, since $\Phi$ is a $*$-homomorphism, it preserves the partial order on selfadjoint elements.
  Let $\mathbf{1}\in\mathcal{B}(\spec(T))$ denote the identity element (the map $z\mapsto 1$) of the algebra $\mathcal{B}(\spec(T))$, and $x_\pm^i$ ($i=1,2$) the coordinate functions of the eigenvectors obtained in (\ref{eq:eigenvectors}), which are bounded Borel functions on $\mathbb{C}$.
 Define 
  \begin{equation*} U \coloneqq{}
  \begin{pmatrix}
    \Phi(x_+^1) & \Phi(x_-^1) \\
    \Phi(x_+^2) & \Phi(x_-^2) \\
  \end{pmatrix}, 
  \end{equation*}
  which is unitary on $H\oplus H$ because $\Phi$ is a $*$-homomorphism and $\{x_\pm(z)\}$ form a basis for $\mathbb{C}^2$ for every $z\in\mathbb{C}$. That is, because the $z$-functions $x_+^1x_-^1+x_+^2x_-^2 \equiv 0$ and $\abs{x_{\pm}^1}^{2}+\abs{x_{\pm}^2}^{2} \equiv 1$ and $\Phi(\mathbf{1})=I$ is the identity on $H$. And for what follows, recall $\Phi(\id_{\spec T}) = T$.

  Furthermore, because
  \begin{equation*}
  D =
  \begin{pmatrix}
    \Phi(\mathbf{1}) & \Phi(0\cdot\mathbf{1}) \\
    \Phi(\id_{\spec T}) & \Phi(0\cdot\mathbf{1}) \\
  \end{pmatrix},
  \end{equation*}
  where here $\cdot$ denotes multiplication by scalars in the algebra $\mathcal{B}(\spec(T))$ and hence $0\cdot\mathbf{1}$ is simply the zero function, and so also
  \begin{equation*}
  2(\Re\, \mathrm{e}^{\mathrm{i}\theta}D) =
  \begin{pmatrix}
    \Phi(2\cos\theta\cdot\mathbf{1}) & \Phi(\mathrm{e}^{-\mathrm{i}\theta}\cdot\overline{\id_{\spec T}}) \\
    \Phi(\mathrm{e}^{\mathrm{i}\theta}\cdot\id_{\spec T}) & \Phi(0\cdot\mathbf{1}) \\
  \end{pmatrix},
  \end{equation*}
  one obtains
  \begin{equation*}
  U^*2(\Re\, \mathrm{e}^{\mathrm{i}\theta}D)U =
  \begin{pmatrix}
    \Phi(\lambda_+) & \Phi(0\cdot\mathbf{1}) \\
    \Phi(0\cdot\mathbf{1}) & \Phi(\lambda_-) \\
  \end{pmatrix}.
  \end{equation*}
  When $-\nicefrac{\pi}{2}<\theta<\nicefrac{\pi}{2}$ one has $\cos\theta>0$, and therefore $\lambda_+\ge 0$ and $\lambda_-\le 0$. 
  Hence
  \begin{equation}
    \label{eq:1}
    (U^*2(\Re\, \mathrm{e}^{\mathrm{i}\theta}D)U)_+ = \Phi(\lambda_+)\oplus 0 \quad\text{and}\quad (U^*2(\Re\, \mathrm{e}^{\mathrm{i}\theta}D)U)_- = 0\oplus\Phi(-\lambda_-).
  \end{equation}
  Moreover, for all $z\in\mathbb{C}$, 
  \begin{equation*}
    \lambda_+(z) = \cos\theta + \sqrt{\cos^2\theta+\sabs{z}^2} \ge 2\cos\theta.
  \end{equation*}
  Furthermore, for the same range of $\theta$, and for all $z$ lying inside the closed ball $\bar{B}(0;\snorm{T})\supseteq \spec T$, 
  \begin{align*}
    -\lambda_-(z) = \sqrt{\cos^2\theta+\sabs{z}^2} - \cos\theta & = \frac{\abs{z}^2}{\cos\theta + \sqrt{\cos^2\theta+\sabs{z}^2}} \\ & \ge \frac{\abs{z}^2}{\cos\theta + \sqrt{\cos^2\theta+\snorm{T}^2}},
  \end{align*}
  and
  \begin{equation*}
    -\lambda_-(z) = \frac{\abs{z}^2}{\cos\theta + \sqrt{\cos^2\theta+\sabs{z}^2}} \le \frac{\sabs{z}^2}{2\cos\theta}.
  \end{equation*}
  From these inequalities, as Borel functions on the spectrum of $T$, we have the following $z$-function inequalities for $-\nicefrac{\pi}{2}<\theta<\nicefrac{\pi}{2}$:
  \begin{equation}
    \label{eq:10}
    \lambda_+ \ge 2\cos\theta\cdot\mathbf{1} 
    \quad\text{and}\quad
    C_1\cdot\abs{\id_{\spec T}}^2 \le -\lambda_- \le C_2\cdot\abs{\id_{\spec T}}^2,
  \end{equation}
  where $C_1,C_2$ are the positive constants given by $C_1 \coloneqq{} \frac{1}{\cos\theta + \sqrt{\cos^2\theta+\snorm{T}^2}}$ and $C_2 \coloneqq{} \frac{1}{2\cos\theta}$.
 After applying $\Phi$ to these inequalities, one has $\Phi(\lambda_+) \ge (2\cos\theta) I$ and $C_1\abs{T}^2 \le \Phi(-\lambda_-) \le C_2\abs{T}^2$.
 Applying the trace yields
  \begin{equation}
    \label{eq:11}
    \begin{aligned}
      \trace_{H\oplus H}(U^*2(\Re\, \mathrm{e}^{\mathrm{i}\theta}D)U)_+ &= \trace_{H\oplus H}(\Phi(\lambda_+)\oplus 0) \\
      &= \trace_H \Phi(\lambda_+) \ge (2\cos\theta) \trace_H  I = \infty,
    \end{aligned}
  \end{equation}
  and
  \begin{equation}
    \label{eq:13}
    C_1 \trace_H\abs{T}^2 \le \trace_H \Phi(-\lambda_-) \le C_2 \trace_H\abs{T}^2.
  \end{equation}
  Because
  \begin{equation}
    \label{eq:12}
    \trace_{H\oplus H}(U^*2(\Re\, \mathrm{e}^{\mathrm{i}\theta}D)U)_- = \trace_{H\oplus H}(0\oplus\Phi(-\lambda_-)) = \trace_H \Phi(-\lambda_-)
  \end{equation}
  and
  \begin{equation}
    \label{eq:14}
    \trace_{H\oplus H}(U^*2(\Re\, \mathrm{e}^{\mathrm{i}\theta}D)U)_\pm = \trace_{H\oplus H}(2(\Re\, \mathrm{e}^{\mathrm{i}\theta}D))_\pm, 
  \end{equation}
  inequalities (\ref{eq:11})--(\ref{eq:14}) prove \ref{item:2} and \ref{item:3}.
 To prove \ref{item:1}, simply notice that when $\theta=\nicefrac{\pi}{2}$, we have $\lambda_+ = -\lambda_- = \sabs{\id_{\spec T}}$ and apply the same arguments as above in \eqref{eq:11} and \eqref{eq:12} along with the fact that $\Phi(\sabs{\id_{\spec T}}) = \abs{T}$
\end{proof}

The following remark shows that idempotents can be decomposed even further than the $2\times 2$ matrix of Lemma~\ref{lem:idempotent-decomp}.
  
\begin{remark}
  \label{rem:finer-idempotent-decomp}
  With the same notation as Lemma~\ref{lem:idempotent-decomp}, we may further decompose the underlying space as $\ker^{\perp}D=\ker T\oplus \ker^{\perp}T$ and $\ker D = \range^{\perp}T\oplus \overline{\range T}$, where $\ker^{\perp}T\coloneqq{} \ker^{\perp}D\ominus \ker T$ and $\range^{\perp}T\coloneqq{}\ker D\ominus \overline{\range T}$. 
  With respect to the ordering of subspaces $H = \ker T \oplus \range^\perp T \oplus \ker^\perp T \oplus \overline{\range T}$ one can write
  \begin{equation*}
    D =
    \begin{pmatrix}
      I & 0 & 0               & 0 \\
      0 & 0 & 0               & 0 \\
      0 & 0 & I               & 0 \\
      0 & 0 & \tilde T        & 0 \\
    \end{pmatrix},
  \end{equation*}
  where $\tilde T \in B(\ker^\perp T,\overline{\range T})$, and the identity operators act on the appropriate spaces. 
  In the decomposition above we have used the ordering of subspaces $\ker T \oplus \range^\perp T \oplus \ker^\perp T \oplus \overline{\range T}$, which makes it clear that $D$ can be written as the direct sum of a projection and another idempotent.
  It is possible for this decomposition to degenerate into simpler ones if, say, $\ker T = \{0\}$, in which the first row and column would disappear.
  Other rows and columns would disappear if their corresponding subspaces were zero, but none of this is problematic. 
  
  If $Q_3:\ker^{\perp} T\to H$ denotes the (linear) inclusion operator and $Q_4: H\to\overline{\range T}$ the projection operator, then $\tilde T = Q_4TQ_3$. 
  From this it is clear that $\tilde T$ is injective and has dense range.
  Furthermore, if $\tilde T =
  U\sabs{\tilde T}$ is the polar decomposition for $\tilde T$, then $U:\ker^\perp T \to \overline{\range T}$ is unitary (i.e., a surjective isometry, see \cite[Problem 134 and corollaries]{Halmos1982}). 
  Conjugating $D$ by the unitary $V \coloneqq{} I \oplus I \oplus I \oplus U^* \in B(H,H')$, where $H' = \ker T \oplus
  \range^\perp T \oplus \ker^\perp T \oplus \ker^\perp T$, one obtains
  \begin{equation*}
  D' \coloneqq{} VDV^* = 
  \begin{pmatrix}
    I & 0 & 0               & 0 \\
    0 & 0 & 0               & 0 \\
    0 & 0 & I               & 0 \\
    0 & 0 & \sabs{\tilde T} & 0 \\
  \end{pmatrix}.
  \end{equation*}
\end{remark}

We need one more lemma before we can prove our main theorem for this section.

\begin{lemma}
  \label{lem:positive-ideal-perturb}
  Let $\mathcal{I}$ be a two-sided ideal of $B(H)$ and let $B=B^*\in\mathcal{I}$ and $A=A^*\in B(H)$.
 Then $A_+\in\mathcal{I}$ if and only if $(A+B)_+\in\mathcal{I}$. Similarly, $A_-\in\mathcal{I}$ if and only if $(A+B)_-\in\mathcal{I}$
\end{lemma}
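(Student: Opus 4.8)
The plan is to reduce everything to the key fact that, for self-adjoint operators, $\|A_+ - C_+\| \le \|A - C\|$ — or more precisely, that $A_+$ differs from $(A+B)_+$ by an element no larger (in the ideal sense) than $B$. The cleanest route is via the inequality $A_+ \le (A+B)_+ + B_+$. To see this, note $A = (A+B) - B \le (A+B)_+ + B_- \le (A+B)_+ + \abs{B}$ (using $(A+B) \le (A+B)_+$ and $-B \le B_- \le \abs{B}$), and since the right-hand side is positive, $A_+ \le (A+B)_+ + \abs{B}$ by the fact that $X \le Y$ with $Y \ge 0$ forces $X_+ \le Y$. Now $\abs{B} \in \mathcal{I}$ because $B \in \mathcal{I}$ and $\mathcal{I}$ is a two-sided ideal (ideals are closed under $X \mapsto \abs{X} = (X^*X)^{1/2}$, as $\abs{X}$ and $X$ generate the same right ideal; alternatively cite that $\mathcal{I}$ is spanned by its positive part and $\abs{B} \le$ a scalar multiple of something in $\mathcal{I}$ — the standard fact is simply $\abs{B} \in \mathcal{I}$).

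From $0 \le A_+ \le (A+B)_+ + \abs{B}$, if $(A+B)_+ \in \mathcal{I}$ then $(A+B)_+ + \abs{B} \in \mathcal{I}$, and since $0 \le A_+ \le$ that element of $\mathcal{I}$, we get $A_+ \in \mathcal{I}$ (a two-sided ideal is hereditary: $0 \le S \le R \in \mathcal{I}$ implies $S \in \mathcal{I}$, because $S = S^{1/2} R^{0} \cdots$ — more carefully, $S^{1/2} = S^{1/2}R^{-1/2}\cdot R^{1/2}$ on $\overline{\range R}$, so $S^{1/2} \in \mathcal{I}$ hence $S = S^{1/2}S^{1/2} \in \mathcal{I}$; this hereditary property of two-sided ideals in $B(H)$ is standard). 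For the reverse implication, swap the roles: write $A + B = A - (-B)$ with $-B = (-B)^* \in \mathcal{I}$, and apply the same argument to conclude $(A+B)_+ \in \mathcal{I}$ whenever $A_+ \in \mathcal{I}$. This gives the first equivalence; the second follows by replacing $A$ with $-A$ and $B$ with $-B$, using $X_- = (-X)_+$.

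The only step requiring care — and the one I'd flag as the main obstacle — is the passage from the operator inequality $0 \le A_+ \le R$ with $R \in \mathcal{I}$ to the conclusion $A_+ \in \mathcal{I}$, i.e. that two-sided ideals of $B(H)$ are hereditary under the positive order. This is genuinely true but is the one place a reader might want a reference or a one-line argument; I would either invoke the classical structure theory of operator ideals (every two-sided ideal of $B(H)$ is a characteristic ideal determined by $s$-number conditions, for which $0 \le S \le R$ and $R \in \mathcal{I}$ trivially give $S \in \mathcal{I}$ since $s_n(S) \le s_n(R)$), or give the direct argument: since $S \le R$, there is a contraction $W$ with $S^{1/2} = W R^{1/2}$ (Douglas' lemma), so $S^{1/2} \in \mathcal{I}$ and $S = S^{1/2} S^{1/2} \in \mathcal{I}$. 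Everything else is the short chain of self-adjoint inequalities above plus the symmetry observations; no delicate estimates are needed, so the write-up should be compact.
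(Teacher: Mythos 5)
Your overall architecture (an operator inequality pinning $A_+$ below an element of $\mathcal{I}$, then hereditariness of two-sided ideals, then symmetry substitutions for the converse and for $A_-$) is the same as the paper's, and your treatment of hereditariness and of the reductions $A\mapsto A+B$, $B\mapsto -B$ and $A_-=(-A)_+$ is fine. But there is a genuine gap at the key step: the claim that ``$X \le Y$ with $Y \ge 0$ forces $X_+ \le Y$'' is false. If it were true, $X_+$ would be the least upper bound of $\{X,0\}$ in the self-adjoint order, contradicting Kadison's anti-lattice theorem (suprema of two self-adjoint operators exist only when they are comparable). Concretely, take
\begin{equation*}
X = \begin{pmatrix} 1 & 0 \\ 0 & -1 \end{pmatrix}, \qquad
Y = \begin{pmatrix} 2 & \sqrt{3/2} \\ \sqrt{3/2} & 1 \end{pmatrix}.
\end{equation*}
Then $Y\ge 0$ and $Y-X\ge 0$ (both have positive trace and determinant $\tfrac12$), yet $Y - X_+$ has determinant $-\tfrac12<0$, so $X_+ \not\le Y$. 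Thus your inequality $A_+ \le (A+B)_+ + \abs{B}$ is not justified, and the whole forward implication rests on it.

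The repair is exactly the paper's device: instead of trying to dominate $A_+$ by the upper bound itself, compress the inequality $A \le (A+B)_+ - B$ by the range projection $P \coloneqq R_{A_+}$ of $A_+$. Since $PA_-P = 0$, one has $PAP = A_+$, and conjugation by $P$ preserves the order, so
\begin{equation*}
0 \le A_+ = PAP \le P\bigl((A+B)_+ - B\bigr)P,
\end{equation*}
whose right-hand side lies in $\mathcal{I}$ when $(A+B)_+\in\mathcal{I}$; now your hereditariness argument applies verbatim. (An alternative patch in the same spirit as your $s$-number remark: $A \le R$ with $R\ge0$ gives $\lambda_n(A)\le\lambda_n(R)$ by Weyl monotonicity, hence $s_n(A_+)\le s_n(R)$, and Calkin's characterization of ideals by $s$-numbers finishes it.) With that one step replaced, the rest of your write-up goes through.
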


\begin{proof}
  Let $R_{A_+}$ be the range projection of the positive part $A_+$ of $A$.
 Then since $A+B \le (A+B)_+$, one has $A \le (A+B)_+ - B$.
 Therefore
  \begin{equation*}
  A_+ = R_{A_+}AR_{A_+} \le R_{A_+}((A+B)_+ - B)R_{A_+},
  \end{equation*}
  and hence $A_+\in\mathcal{I}$ whenever $(A+B)_+\in\mathcal{I}$. 
  Here we are using the fact that two-sided ideals of $B(H)$ are hereditary, which is a well-known consequence of Calkin's characterization of ideals of $B(H)$ in terms of their $s$-numbers in \cite{Cal-1941-AoM2}.

  For the other implication, make the substitutions $A \mapsto A+B$, $B \mapsto - B$ and apply the result just proved.
 More precisely, one obtains 
  \begin{equation*} 
  (A+B)_+ \le P((A+B)-B)_+ + B)P = P(A_+ + B)P,
  \end{equation*}
  where $P \coloneqq{} R_{(A+B)_+}$.
  Hence $(A+B)_+\in\mathcal{I}$ if $A_+\in\mathcal{I}$. 
  
  To see that $A_-\in\mathcal{I}$ if and only if $(A+B)_-\in\mathcal{I}$, note that $A_- = (-A)_+$ and apply the result just proved.
\end{proof}

We are now in a position to prove our first main theorem.

\firstmaintheorem

\begin{proof}
  The implication \ref{item:mthm1}$\implies$\ref{item:mthm2} is clear, as are the implications \ref{item:mthm4}$\implies$\ref{item:mthm5}$\implies$\ref{item:mthm6}. 
The implication \ref{item:mthm3}$\implies$\ref{item:mthm4} is a direct consequence of Theorem~\ref{thm:subsequence-zero-diagonalizability}, for if $R\{\trace T\}=\mathbb{C}$, then there exists a basis $\mathfrak{e}$ with respect to which $\trace_\mathfrak{e} T = 0$, and thus by Theorem~\ref{thm:subsequence-zero-diagonalizability}, $T$ is zero-diagonal. 
Hence the main thrust of this theorem is proving the implications \ref{item:mthm6}$\implies$\ref{item:mthm2}$\implies$\ref{item:mthm3} and \ref{item:mthm2}$\implies$\ref{item:mthm1}.

  For the remainder of the proof we use Lemma~\ref{lem:positive-ideal-perturb}, Remark~\ref{rem:finer-idempotent-decomp}, Proposition~\ref{prop:stepping-stone}, and Theorem~\ref{thm:trace-range-shape} to prove the implications \ref{item:mthm6}$\implies$\ref{item:mthm2}$\implies$\ref{item:mthm3} which, with the above paragraph, establishes the equivalences \ref{item:mthm2}--\ref{item:mthm6}.
  Having demonstrated these equivalences, we prove \ref{item:mthm4}$\implies$\ref{item:mthm1} in lieu of  \ref{item:mthm2}$\implies$\ref{item:mthm1}.

  \begin{proof}[\ref{item:mthm6}$\implies$\ref{item:mthm2}] \renewcommand{\qedsymbol}{} We prove the contrapositive, that the nilpotent part of $D$ is Hilbert--Schmidt implies $R\{\trace D\}=\emptyset$.
  Suppose the nilpotent part of $D$ is Hilbert--Schmidt.

    \emph{Case 1: The nilpotent part of $D$ has finite rank.} 
    
    \noindent By Lemma~\ref{lem:idempotent-decomp}, $D$ has the form
    \begin{equation*}
    D =
    \begin{pmatrix}
      I & 0 \\
      T & 0 \\
    \end{pmatrix}.
    \end{equation*}
    Set
    \begin{equation*}
    A =
    \begin{pmatrix}
      I & 0 \\
      0 & 0 \\
    \end{pmatrix}
    \quad\text{and}\quad
    B =
    \frac{1}{2}
    \begin{pmatrix}
      0 & T^* \\
      T & 0  \\
    \end{pmatrix},
    \end{equation*}
    and so $\Re D=A+B$. 
    By hypothesis, $T$ has finite rank hence $B$ has finite rank.
 Since $A=A_+\notin\mathcal{C}_1$ and $B\in\mathcal{C}_1$ because $B$ has finite rank, $(\Re\, D)_+=(A+B)_+\notin\mathcal{C}_1$ by Lemma~\ref{lem:positive-ideal-perturb}.
 However, $A_-=0\in\mathcal{C}_1$ and so again Lemma~\ref{lem:positive-ideal-perturb} ensures $(\Re\, D)_-=(A+B)_-\in\mathcal{C}_1$. 
Therefore,
 \begin{equation}
   \label{eq:theta-zero-1}
   (\Re\, D)_+\notin\mathcal{C}_1 \quad\text{and}\quad (\Re\, D)_-\in\mathcal{C}_1.
 \end{equation}
 Then Theorem~\ref{thm:trace-range-shape}(iv) with $\theta=0$ ensures $R\{\trace D\}=\emptyset$.

    \emph{Case 2: The nilpotent part of $D$ has infinite rank.}

    \noindent By Remark~\ref{rem:finer-idempotent-decomp}, write 
    \begin{equation*} 
    D' = 
    \begin{pmatrix}
      I & 0 & 0               & 0 \\
      0 & 0 & 0               & 0 \\
      0 & 0 & I               & 0 \\
      0 & 0 & \sabs{\tilde T} & 0 \\
    \end{pmatrix}, 
    \end{equation*}
    and from $\tilde T = Q_4TQ_3$ we know that $\sabs{\tilde T}$ is Hilbert--Schmidt, and since $\tilde T$ has dense range in $\overline{\range T}$ which is infinite dimensional $\tilde T$, and hence also $\sabs{\tilde T}$, have infinite rank.
 Define $J:=\ker T\oplus \range^\perp T$ and $K:=\ker^\perp T$, then set $P\in B(J)$ and $\tilde D\in B(K\oplus K)$ to 
    \begin{equation*}
    P \coloneqq{}
    \begin{pmatrix}
      I & 0 \\
      0 & 0 \\
    \end{pmatrix}
    \quad\text{and}\quad
    \tilde D \coloneqq{}
    \begin{pmatrix}
      I & 0 \\
      \sabs{\tilde T} & 0 \\
    \end{pmatrix}
    \end{equation*}
    Then $\tilde D$ satisfies the conditions of Proposition~\ref{prop:stepping-stone} and so $(\Re\, D')_+ = P \oplus (\Re\, \tilde D)_+ \notin\mathcal{C}_1$ because 
    \begin{equation*}
    \trace_H(P \oplus (\Re\, \tilde D)_+) = \trace_J P + \trace_{K\oplus K}(\Re\, \tilde D)_+  \ge \trace_{K\oplus K}(\Re\, \tilde D)_+ \underset{\ref{prop:stepping-stone}\ref{item:mthm2}}{=} \infty.
    \end{equation*}
    Furthermore, $(\Re\, D')_- \in \mathcal{C}_1$ because
    \begin{equation*} 
    (\Re\, D')_- = 0 \oplus (\Re\, \tilde D)_-
    \end{equation*}
    and $(\Re\, \tilde D)_-\in\mathcal{C}_1$ by Proposition~\ref{prop:stepping-stone}(iii) since the nilpotent part $\sabs{\tilde T}$ of $\tilde D$ is Hilbert--Schmidt.
    Therefore $(\Re\, D')_+\notin \mathcal{C}_1$ and $(\Re\, D')_-\in \mathcal{C}_1$, and also via unitary equivalence
    \begin{equation}
      \label{eq:theta-zero-2}
      (\Re\, D)_+\notin \mathcal{C}_1 \quad\text{and}\quad (\Re\, D)_-\in \mathcal{C}_1.
    \end{equation}
    Thus by Theorem~\ref{thm:trace-range-shape}(iv), one has that $R\{\trace D\} = \emptyset$.
\end{proof}
\begin{proof}[\ref{item:mthm2}$\implies$\ref{item:mthm3}] \renewcommand{\qedsymbol}{}
  Suppose the nilpotent part of $D$ is not Hilbert--Schmidt. 
  Then just like in Case 2 above use Remark~\ref{rem:finer-idempotent-decomp} to decompose $D' = P \oplus \tilde D$, with $\tilde D$ satisfying the conditions of Proposition~\ref{prop:stepping-stone}(ii). 
  Then for $-\nicefrac{\pi}{2}<\theta<\nicefrac{\pi}{2}$
  \begin{equation*} 
  \trace_H(\Re\, \mathrm{e}^{\mathrm{i}\theta}D')_+ = \trace_J(\cos\theta P) + \trace_{K\oplus K}(\Re\, \mathrm{e}^{\mathrm{i}\theta}\tilde D)_+ \ge \trace_{K\oplus K}(\Re\, \mathrm{e}^{\mathrm{i}\theta}\tilde D)_+ = \infty. 
  \end{equation*}
  Furthermore, since the nilpotent part $T$ of $D$ is not Hilbert--Schmidt, and hence $\sabs{\tilde T}$ is not Hilbert--Schmidt, one has
  \begin{equation*} 
  \trace_{H}(\Re\, \mathrm{e}^{\mathrm{i}\theta}D')_- = 0 + \trace_{K\oplus K}(\Re\, \mathrm{e}^{\mathrm{i}\theta}\tilde D)_- = \infty
  \end{equation*}
  by Proposition~\ref{prop:stepping-stone}(iii).
  Finally, since $\sabs{\tilde T}$ is not Hilbert--Schmidt, neither is it trace-class. 
  Therefore, by Proposition~\ref{prop:stepping-stone}(i) 
  \begin{equation*}
  \trace_H(\Re\, \mathrm{e}^{\nicefrac{\mathrm{i}\pi}{2}}D')_{\pm} = \trace_H(\Im\, D')_\pm = 0 + \trace_{K\oplus K}(\Im\, \tilde D)_\pm = \infty.
  \end{equation*}
  Thus we have proven that $\trace(\Re\, \mathrm{e}^{\mathrm{i}\theta}D)_\pm=\trace(\Re\, \mathrm{e}^{\mathrm{i}\theta}D')_\pm=\infty$ for all $-\nicefrac{\pi}{2}<\theta\le\nicefrac{\pi}{2}$ and hence also for all $\theta\in\mathbb{R}$, and so by Theorem~\ref{thm:trace-range-shape}(iv) one has $R\{\trace D\}=\mathbb{C}$. 
\end{proof}
Having established the equivalence of \ref{item:mthm2}--\ref{item:mthm6} and the implication \ref{item:mthm1}$\implies$\ref{item:mthm2}, it suffices to prove \ref{item:mthm4}$\implies$\ref{item:mthm1}. 
We will in fact prove the contrapositive. 
To this end, suppose $D$ is a Hilbert--Schmidt perturbation of a projection. 
That is, $D=P+K$ where $P$ is a projection and $K\in\mathcal{C}_2$.
 Because $D$ is idempotent one has
\begin{equation*}
P+K = D = D^2 = P^2 + PK + KP + K^2 = P + PK + KP + K^2, 
\end{equation*}
and so
\begin{equation}
  \label{eq:K=PK+KP+K^2}
  K = PK + KP + K^2 \qquad\text{and}\qquad PKP = 2PKP + PK^2P,
\end{equation}
so $PKP=-PK^2P\in\mathcal{C}_1$. 
Similarly for $P^{\perp}$ one has $P^\perp KP^\perp = P^\perp K^2P^\perp\in\mathcal{C}_1$.
Therefore, with respect to the decomposition $H = PH \oplus P^\perp H$, one has
\begin{equation*}
K =
\begin{pmatrix}
K_1 & K_2 \\
K_3 & K_4 \\
\end{pmatrix},
\end{equation*}
where $K_1,K_4\in\mathcal{C}_1$ and $K_2,K_3\in\mathcal{C}_2$. 
A technical note is that $P$ must have infinite rank.
Otherwise, if $P$ were finite rank, then so also $K_2,K_3$ would be finite rank. 
Hence $K$ would be trace-class, and so also would $D=P+K$, which contradicts the fact that $D$ is an infinite rank idempotent because of Lemma~\ref{lem:idempotent-decomp}.
Thus relative to $H = PH \oplus P^\perp H$ we may write
\begin{equation*}
D =
\overbrace{
  \begin{pmatrix}
    I & K_2 \\
    K_3 & 0 \\
  \end{pmatrix}
}^{D_1}
+
\overbrace{
  \begin{pmatrix}
    K_1 & 0 \\
    0 & K_4 \\
  \end{pmatrix}
}^{D_2}.
\end{equation*}
Moreover, because 
\begin{equation*}
\Re\, D_1 = \Re
\overbrace{
\begin{pmatrix}
  I & 0 \\
  K_2^*+K_3 & 0 \\
\end{pmatrix}
}^{\tilde D_1}
\end{equation*}
and $K_2^*+K_3\in\mathcal{C}_2$, by the proof of \ref{item:mthm6}$\implies$\ref{item:mthm2} (see \eqref{eq:theta-zero-1} and \eqref{eq:theta-zero-2} for Cases 1 and 2),  $(\Re\, D_1)_+=(\Re\, \tilde D_1)_+\notin\mathcal{C}_1$ but $(\Re\, D_1)_-=(\Re\, \tilde D_1)_-\in\mathcal{C}_1$. 
So by Theorem~\ref{thm:trace-range-shape}(iv), $R\{\trace D_1\}=\emptyset$ and hence $D_1$ does not have an absolutely summable diagonal in \emph{any} basis.
 Because $D_2\in\mathcal{C}_1$, its diagonal in \emph{any} basis is absolutely summable.
 Therefore, there is no basis in which $D=D_1+D_2$ has a zero diagonal, which completes the proof.
\end{proof}

The following corollary answers Question~\ref{que:zero-diagonal-finite-rank} due to Jasper.

\begin{corollary}
  \label{cor:answer-q2}
  A nonzero idempotent $D$ is zero-diagonal if and only if it is not a Hilbert--Schmidt perturbation of a projection.
\end{corollary}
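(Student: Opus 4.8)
The plan is to split on whether $D$ has finite or infinite rank. If $D$ has infinite rank, then the asserted equivalence is nothing other than the equivalence of conditions \ref{item:mthm1} and \ref{item:mthm4} in Theorem~\ref{thm:idempotent-zero-diagonal} (``$D$ is not a Hilbert--Schmidt perturbation of a projection'' $\iff$ ``$D$ is zero-diagonal''), so no new argument is needed there. Hence all that remains is the finite rank case, and there I expect to show that \emph{both} sides of the biconditional are false, so the equivalence holds trivially.

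So suppose $D$ is a nonzero idempotent with $\rank D = n$ for some finite $n\ge 1$. First, $D$ \emph{is} a Hilbert--Schmidt perturbation of a projection: since $D$ has finite rank it lies in $\mathcal{C}_2$, and $D = 0 + D$ exhibits $D$ as a $\mathcal{C}_2$-perturbation of the zero projection. Second, $D$ is \emph{not} zero-diagonal: by Lemma~\ref{lem:idempotent-decomp}, with respect to $H = \ker^\perp D \oplus \ker D$ we may write $D = \left(\begin{smallmatrix} I & 0 \\ T & 0 \end{smallmatrix}\right)$, and finite rank forces $\dim \ker^\perp D = n < \infty$, whence $\trace D = \trace I_{\ker^\perp D} = n$. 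Because $D$ is trace-class, in every basis $\mathfrak{e} = \{e_j\}$ its diagonal $\langle (De_j,e_j)\rangle$ is absolutely summable with sum equal to $\trace D = n \neq 0$; in particular no basis makes the diagonal identically zero. Thus in the finite rank case both conditions fail, and together with the infinite rank case this proves the corollary.

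I do not anticipate any real obstacle: the entire substance sits in Theorem~\ref{thm:idempotent-zero-diagonal}, and the finite rank case is immediate from the basis (unitary) invariance of the trace on $\mathcal{C}_1$. The only point worth a sentence is recording why a finite rank idempotent has trace equal to its rank, which falls straight out of the canonical decomposition of Lemma~\ref{lem:idempotent-decomp}.
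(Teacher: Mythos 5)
Your proposal is correct and follows essentially the same route as the paper: the infinite rank case is delegated to Theorem~\ref{thm:idempotent-zero-diagonal}, and in the finite rank case the paper likewise observes that $D$ is a finite rank (hence Hilbert--Schmidt) perturbation of the zero projection while $\trace D = \rank D > 0$ rules out zero-diagonality. No differences worth noting.
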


\begin{proof}
  If $D$ has infinite rank, this is handled by Theorem~\ref{thm:idempotent-zero-diagonal}.
If $D$ has finite rank, then so does the nilpotent part of $D$.
Thus $D$ is a finite rank (and hence Hilbert--Schmidt) perturbation of the zero projection.
Furthermore, by Lemma~\ref{lem:idempotent-decomp} $\trace D = \rank D >0$ for finite rank idempotents, and so $D$ is not zero-diagonal. 
\end{proof}

In the case of infinite rank projections with infinite dimensional kernel, the next corollary is a strengthening of the result due to Fan \cite[Theorem 3]{Fan-1984-TotAMS} that an operator $T$ is a norm limit of zero-diagonal operators if and only if $0\in W_e(T)$, the essential numerical range. 
For $P$ a projection, $0\in W_e(P)$ if and only if $\trace P^\perp = \infty$, and thus Fan's result guarantees such projections are a norm limit of zero-diagonal operators. 
However, we take this a step further by proving these zero-diagonal operators may be taken to be idempotent so long as $\trace P = \infty$ as well. 

\begin{corollary}
  Every projection $P$ with $\trace P =\trace P^\perp =\infty$ is a norm limit of zero-diagonal idempotents. 
\end{corollary}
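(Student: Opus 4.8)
The plan is to exhibit, for each $\epsilon>0$, a single infinite rank idempotent $D_\epsilon$ with $\snorm{D_\epsilon-P}=\epsilon$ whose nilpotent part fails to be Hilbert--Schmidt; Theorem~\ref{thm:idempotent-zero-diagonal} then declares each $D_\epsilon$ zero-diagonal, so letting $\epsilon=\nicefrac{1}{n}\to 0$ exhibits $P$ as a norm limit of zero-diagonal idempotents. The guiding idea is that grafting a small off-diagonal corner onto $P$ moves it only slightly in norm, yet can push the resulting idempotent out of the class of Hilbert--Schmidt perturbations of projections.

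To set this up, note that the hypotheses $\trace P=\trace P^\perp=\infty$ say precisely that both $PH$ and $P^\perp H$ are separable and infinite dimensional, so we may fix a unitary $W\colon PH\to P^\perp H$. With respect to $H=PH\oplus P^\perp H$ define
\begin{equation*}
D_\epsilon \coloneqq{}
\begin{pmatrix}
I & 0 \\
\epsilon W & 0 \\
\end{pmatrix},
\qquad\text{so that}\qquad
D_\epsilon - P =
\begin{pmatrix}
0 & 0 \\
\epsilon W & 0 \\
\end{pmatrix}.
\end{equation*}
A one-line matrix multiplication gives $D_\epsilon^2=D_\epsilon$, and since $\ker D_\epsilon = P^\perp H$ and $\ker^\perp D_\epsilon = PH$, this block form is already the canonical form of Lemma~\ref{lem:idempotent-decomp}, with nilpotent part $T=\epsilon W$. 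Clearly $\snorm{D_\epsilon-P}=\epsilon\snorm{W}=\epsilon$.

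The two remaining points are routine: $D_\epsilon$ has infinite rank because $\rank D_\epsilon=\dim PH=\infty$ by Lemma~\ref{lem:idempotent-decomp}, and its nilpotent part $\epsilon W$, being a nonzero scalar multiple of a unitary between infinite dimensional spaces, is non-compact and so certainly not Hilbert--Schmidt. Hence the implication ``nilpotent part not Hilbert--Schmidt $\implies$ zero-diagonal'' from Theorem~\ref{thm:idempotent-zero-diagonal} applies to each $D_\epsilon$, and $D_{1/n}\to P$ in norm. The only step meriting any care is the choice of perturbation: it must be small in operator norm yet far from Hilbert--Schmidt, which is why we insert an honest unitary, not a compact operator, into the lower-left corner; beyond that there is no real obstacle, since the heavy lifting has already been done in Theorem~\ref{thm:idempotent-zero-diagonal}.
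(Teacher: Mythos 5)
Your proof is correct and is essentially the paper's argument: the paper also writes $P=I\oplus 0$ and perturbs it to an idempotent $\begin{psmallmatrix} I & 0 \\ T & 0 \end{psmallmatrix}$ whose nilpotent part has arbitrarily small norm but is not Hilbert--Schmidt, then invokes the equivalence (ii)$\iff$(iv) of Theorem~\ref{thm:idempotent-zero-diagonal}. Your choice $T=\epsilon W$ with $W$ unitary is simply an explicit instance of such a nilpotent part.
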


\begin{proof}
  For $P=I\oplus 0$ consider idempotents 
  \( 
  \begin{psmallmatrix}
    I & 0 \\
    T & 0 \\
  \end{psmallmatrix}
  \)
  whose nilpotent part has arbitrarily small norm but is not Hilbert--Schmidt and apply Theorem 2.5 (ii)$\iff$(iv).
\end{proof}

\subsection*{Constructing bases to achieve zero-diagonality}

The proof of Theorem~\ref{thm:idempotent-zero-diagonal} was existential in the sense that it did not explicitly construct a basis in which a given idempotent has zero diagonal.
The remainder of this section is devoted to providing an algorithm for constructing such a basis when it exists (i.e., when the idempotent is not a Hilbert--Schmidt perturbation of a projection, which is included in the case when $\dim\ker D = \infty = \dim\ker^{\perp} D$).
As with the proof of Proposition~\ref{prop:stepping-stone}, a careful consideration first of the $2\times 2$ case is in order.

\begin{remark}
  \label{rem:2x2-theta-minimum}
  Consider a $2\times 2$ idempotent matrix, $D$, and the counterclockwise rotation matrix through an angle $\theta$, $R_\theta$, given by the formulas
  \begin{equation*} 
  D = 
  \begin{pmatrix}
    1 & 0 \\
    d & 0 \\
  \end{pmatrix}
  \quad \text{and}\quad
  R_\theta = 
  \begin{pmatrix}
    \cos\theta & -\sin\theta \\
    \sin\theta & \cos\theta \\
  \end{pmatrix},
  \end{equation*}
  where $d\ge 0$. 
  Conjugating $D$ by $R_\theta$ is equivalent to changing the basis for $\mathbb{C}^2$:
  \begin{align*}
    R_{-\theta}DR_\theta &= 
  \begin{pmatrix}
    \cos\theta & \sin\theta \\
    -\sin\theta & \cos\theta \\
  \end{pmatrix}
  \begin{pmatrix}
    1 & 0 \\
    d & 0 \\
  \end{pmatrix}
  \begin{pmatrix}
    \cos\theta & -\sin\theta \\
    \sin\theta & \cos\theta \\
  \end{pmatrix}
  \\
  &= 
  \begin{pmatrix}
    \cos^2\theta+d\sin\theta\cos\theta & -\sin\theta\cos\theta-d\sin^2\theta \\
    -\sin\theta\cos\theta+d\cos^2\theta & \sin^2\theta-d\sin\theta\cos\theta \\
  \end{pmatrix}
  \\ 
  &= 
  \begin{pmatrix}
    \frac{1+\cos 2\theta+d\sin 2\theta}{2} & -\sin\theta\cos\theta-d\sin^2\theta \\
    -\sin\theta\cos\theta+d\cos^2\theta & \frac{1-\cos 2\theta-d\sin 2\theta}{2} \\
  \end{pmatrix}.
  \end{align*}
  Elementary calculus shows that the minimum diagonal entry occurs when $\theta=\frac{\arctan d}{2}$ and corresponds to a negative value of 
  \begin{equation*}
  -d^- \coloneqq{} \frac{1}{2}\left(1-\sqrt{1+d^2}\right) = \frac{-d^2}{2\left(1+\sqrt{1+d^2}\right)}. 
  \end{equation*}
  Since the trace is basis independent, the other diagonal entry is necessarily $1+d^-$. 
  Furthermore, by continuity of the diagonal entries as a function of $\theta$, for any value $x$ with $-d^-\le x\le 0$, there is some $\theta$ for which one of the diagonal entries is $x$. 
\end{remark}

We require the following elementary result in linear algebra \cite[Page 77, Problem 3]{HJ-1990-MA}.
It's proof by induction is straightforward and we include it here for completeness.

\begin{lemma}
  \label{lem:zero-diagonal-trace-zero}
  Let $X\in M_n(\mathbb{C})$. 
Then $\trace X = 0$ if and only if there is a basis in which $X$ has zero diagonal.
\end{lemma}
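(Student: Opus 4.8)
The forward implication is immediate: conjugating by a unitary (equivalently, changing orthonormal basis) preserves the trace, so if $X$ is zero-diagonal in some basis then $\trace X$ is a sum of zeros.

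For the converse I plan an induction on $n$. The case $n=1$ is trivial, since $\trace X = 0$ then forces $X=0$. For the inductive step, suppose the statement holds in dimension $n-1$ and let $X \in M_n(\mathbb{C})$ with $\trace X = 0$. The one nontrivial ingredient is the existence of a unit vector $v_1$ with $\langle Xv_1,v_1\rangle = 0$. Granting that, extend $v_1$ to an orthonormal basis $\{v_1,\dots,v_n\}$ and let $P$ be the orthogonal projection onto $M := \spans\{v_2,\dots,v_n\}$. Then $X' := PXP|_M \in M_{n-1}(\mathbb{C})$ has $\trace X' = \trace X - \langle Xv_1,v_1\rangle = 0$, so by the inductive hypothesis there is an orthonormal basis $\{w_2,\dots,w_n\}$ of $M$ in which $X'$ has zero diagonal. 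Since $Pw_k = w_k$, we get $\langle Xw_k,w_k\rangle = \langle X'w_k,w_k\rangle = 0$ for $k\ge 2$, and of course $\langle Xv_1,v_1\rangle = 0$; hence $X$ has zero diagonal in the orthonormal basis $\{v_1,w_2,\dots,w_n\}$.

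It remains to produce $v_1$. Fix any orthonormal basis $\{e_1,\dots,e_n\}$ and set $d_k = \langle Xe_k,e_k\rangle$, so that $\sum_k d_k = \trace X = 0$. Each $d_k$ lies in the numerical range $W(X) = \{\langle Xu,u\rangle : \norm{u}=1\}$, which is convex by the Toeplitz--Hausdorff theorem; therefore $0 = \frac1n\sum_k d_k$ is a convex combination of points of $W(X)$, whence $0\in W(X)$, i.e. there is a unit vector $v_1$ with $\langle Xv_1,v_1\rangle = 0$. (If one prefers to avoid the full Toeplitz--Hausdorff theorem, Carath\'eodory's theorem writes $0$ as a convex combination of at most three of the $d_k$, and then two successive $2\times 2$ rotations in coordinate $2$-planes---using only that the numerical range of a $2\times 2$ matrix contains the segment joining its diagonal entries---rotate one diagonal entry to $0$.) This existence statement is the only real content; the rest is routine bookkeeping with compressions, so I anticipate no serious obstacle, and the induction is "straightforward" as advertised.
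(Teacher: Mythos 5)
Your proof is correct and takes essentially the same route as the paper's: induction on $n$, producing a unit vector $v_1$ with $\langle Xv_1,v_1\rangle=0$ from the convexity of the numerical range (Toeplitz--Hausdorff) applied to the diagonal entries, then compressing to $v_1^{\perp}$ and invoking the inductive hypothesis. The parenthetical Carath\'eodory/$2\times 2$-rotation variant is a pleasant extra but not needed.
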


\begin{proof}
  One direction is clear, so suppose $\trace X = 0$. 
  We proceed by induction on the size $n$ of the $n \times n$ matrix $X$. 
  The case $n=1$ is clear.
  Given any basis $\{e_j\}_{j=1}^n$, one has 
  \begin{equation*}
    0 = \trace X = \sum_{j=1}^n (Xe_j,e_j), 
  \end{equation*}
  and therefore also $0 = \sum_{j=1}^n \frac{(Xe_j,e_j)}{n}$. 
  Thus zero is in the convex hull of $\{(Xe_j,e_j)\} \subseteq W(X)$. 
  But the Toeplitz--Hausdorff Theorem, that the numerical range $W(X)$ is convex, ensures $0 \in W(X)$. 
  So there is some unit vector $f_1$ for which $(Xf_1,f_1) = 0$. 
  Let $P$ be the projection onto the orthogonal complement of $f_1$. 
  Then we find 
  \begin{equation*}
    0 = \trace X = (Xf_1,f_1) + \trace(PXP) = \trace(PXP). 
  \end{equation*}
  The matrix $PXP$ can be viewed as being of size $(n-1) \times (n-1)$ by expressing it in a basis which contains $f_1$ and deleting the row and column corresponding to $f_1$ (which consist solely of zeros). 
  By applying the inductive hypothesis to $PXP$ we obtain orthonormal vectors $f_2,\ldots,f_n$ which are orthogonal to $f_1$ and satisfy $(Xf_j,f_j) = (XPf_j,Pf_j) = (PXPf_j,f_j) = 0$ for $2 \le j \le n$. 
  Therefore $\{f_j\}_{j=1}^n$ is a basis with respect to which $X$ has zero diagonal.
\end{proof}

\noindent We will use the following obvious corollary of Lemma~\ref{lem:zero-diagonal-trace-zero} extensively in the next section.

\begin{corollary}
  \label{cor:trace-nlambda-diagonal-lambda}
  Let $X\in M_n(\mathbb{C})$. 
Then $\trace X = n\lambda$ if and only if there is a basis in which $X$ has constant diagonal sequence $\lambda$. 
More generally, if $X\in B(H)$ with basis $\{e_n\}_{n\in\mathbb{N}}$ and $\langle n_k \rangle_{k=1}^m$ a finite subsequence of $\mathbb{N}$ with restricted trace $\sum_{k=1}^m (Xe_{n_k},e_{n_k}) = m\lambda$, then there is an orthonormal set $\{f_{n_k}\}_{k=1}^m$ for which $(Xf_{n_k},f_{n_k})=\lambda$ for $k=1,\ldots,m$ and $\spans \{f_{n_k}\}_{k=1}^m = \spans \{e_{n_k}\}_{k=1}^m$. 
\end{corollary}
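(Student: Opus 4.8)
The plan is to deduce both assertions directly from Lemma~\ref{lem:zero-diagonal-trace-zero} by an affine shift together with a compression, so essentially no new work is needed. For the first (finite-dimensional) claim, I would observe that replacing $X$ by $X-\lambda I$ sends each diagonal entry $(Xe_j,e_j)$ to $(Xe_j,e_j)-\lambda$ and sends the trace from $n\lambda$ to $0$. Hence $\trace X=n\lambda$ if and only if $\trace(X-\lambda I)=0$, which by Lemma~\ref{lem:zero-diagonal-trace-zero} holds if and only if there is a basis in which $X-\lambda I$ has zero diagonal, equivalently a basis in which $X$ has constant diagonal sequence $\lambda$. The reverse implication is immediate, since a constant diagonal $\lambda$ forces $\trace X=n\lambda$.

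For the ``more generally'' statement I would pass to the $m$-dimensional subspace $M:=\spans\{e_{n_k}\}_{k=1}^m\subseteq H$ and let $P\in B(H)$ be the orthogonal projection onto $M$. Viewing the compression $PXP|_M$ as an element of $M_m(\mathbb{C})$ via the orthonormal basis $\{e_{n_k}\}_{k=1}^m$ of $M$, its diagonal entries are exactly $(PXPe_{n_k},e_{n_k})=(Xe_{n_k},e_{n_k})$, so $\trace(PXP|_M)=\sum_{k=1}^m (Xe_{n_k},e_{n_k})=m\lambda$. Applying the first part of the corollary to $PXP|_M$ produces an orthonormal basis $\{f_{n_k}\}_{k=1}^m$ of $M$ with $(PXPf_{n_k},f_{n_k})=\lambda$ for each $k$. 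Since each $f_{n_k}$ lies in $M$, one has $Pf_{n_k}=f_{n_k}$, whence $(Xf_{n_k},f_{n_k})=(PXPf_{n_k},f_{n_k})=\lambda$, and automatically $\spans\{f_{n_k}\}_{k=1}^m=M=\spans\{e_{n_k}\}_{k=1}^m$.

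As every step is either a one-line algebraic identity or a direct invocation of Lemma~\ref{lem:zero-diagonal-trace-zero}, there is no genuine obstacle. The only point deserving a moment's care is the identification of the diagonal of the compression $PXP|_M$ with the restricted diagonal of $X$, together with the observation $Pf_{n_k}=f_{n_k}$, which is what makes the quadratic forms of $PXP$ and of $X$ agree on $M$.
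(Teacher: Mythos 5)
Your proposal is correct and follows essentially the same route as the paper: apply Lemma~\ref{lem:zero-diagonal-trace-zero} to $X-\lambda I$ for the matrix case, and for the general case compress to $\spans\{e_{n_k}\}_{k=1}^m$ via the projection $P$ and use $(PXPf_{n_k},f_{n_k})=(Xf_{n_k},f_{n_k})$ since $Pf_{n_k}=f_{n_k}$. No gaps.
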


\begin{proof}
  For $X\in M_n(\mathbb{C})$ apply Lemma~\ref{lem:zero-diagonal-trace-zero} to $X-\lambda I$ and note that $\lambda I$ has constant diagonal sequence $\lambda$ with respect to \emph{any} basis. 

  For the general case $X\in B(H)$, let $P$ be the projection on $\spans \{e_{n_k}\}_{k=1}^m$ and apply the matrix result to $PXP$. 
Then simply notice that $(PXPf_{n_k},f_{n_k}) = (XPf_{n_k},Pf_{n_k}) = (Xf_{n_k},f_{n_k})$. 
\end{proof}

We are now ready to provide our algorithm. It requires an elementary theoretical first step with all succeeding steps algorithmic.

\begin{algorithm}
  Suppose that $D\in B(H)$ is not a Hilbert--Schmidt perturbation of a projection.
Then the following explicitly constructs (i.e., gives an algorithm for producing) a basis in which $D$ is zero-diagonal.
\end{algorithm}

\begin{proof}[Construction]
  By Theorem~\ref{thm:idempotent-zero-diagonal}, the nilpotent part of $D$ is not Hilbert--Schmidt.
  Then by introduction ((ii)-\eqref{eq:2}, contrapositive), there exists a basis in which the diagonal of the nilpotent part is not square-summable. 
  That is, there exists a basis for $H$ for which
  \begin{equation}
    \label{eq:nilpotent-part-diag-not-ell-2}
    D = 
    \left(
      \begin{array}{ccc|ccc}
        1      & \cdots & 0         & 0      & \cdots & 0      \\ 
        \vdots & 1      & \vdots    & \vdots & \ddots & \vdots \\
        0      & \cdots & \ddots    & 0      & \cdots & \ddots \\ \hline
        d_1    & \cdots & {\Huge *} & 0      & \cdots & 0      \\
        \vdots & d_2    & \vdots    & \vdots & \ddots & \vdots \\
        {\Huge *}      & \cdots & \ddots & 0      & \cdots & \ddots \\
      \end{array}
    \right)
  \end{equation}
  with $\langle d_n \rangle\in\ell^\infty\setminus\ell^2$. 
  Furthermore, by conjugating by a unitary $U$ of the form $U = I \oplus \diag\langle u_n \rangle$, we may even assume without loss of generality that $d_n\ge 0$.
  Let the basis which gives the form equation \eqref{eq:nilpotent-part-diag-not-ell-2} be $\mathfrak{e}\coloneqq{}\{e_n,e'_n\}_{n\in\mathbb{N}}$.
  We will transform these into a new basis $\mathfrak{f}\coloneqq{}\{f_n,f'_n\}_{n\in\mathbb{N}}$ for which $\spans\{e_n,e'_n\} = \spans\{f_n,f'_n\}$ for each $n\in\mathbb{N}$. 
  Specifically, $f_n = \cos\theta_ne_n+\sin\theta_ne'_n$ and $f'_n = -\sin\theta_ne_n+\cos\theta_ne'_n$ form a rotation of the pair $e_n,e'_n$ through an angle $\theta_n$ which we will choose momentarily.
  
  Recall \autoref{rem:2x2-theta-minimum} and notice that
  \begin{equation*}
  \sum_{n=1}^\infty d^-_n = \sum_{n=1}^\infty \frac{d_n^2}{2\big(1+\sqrt{1+d_n^2}\big)} \ge \frac{1}{2\big(1+\sqrt{1+\snorm{\langle d_n \rangle}_\infty^2}\big)} \sum_{n=1}^\infty d_n^2 = \infty.
  \end{equation*}
  Let $m_1$ be the smallest integer for which $\sum_{n=1}^{m_1} d^-_n \ge 1+d^-_1$.
Necessarily $m_1 \ge 2$. 
  Now define $\theta_n = \frac{\arctan d_n}{2}$ for $1\le n< m_1$, hence by \autoref{rem:2x2-theta-minimum}, $-d_n^-=(Df'_n,f'_n)$ and $1+d_n^-=(Df_n,f_n)$. 
  Our choice of $m_1$ guarantees
  \begin{equation*}
    \sum_{n=1}^{m_1-1} d^-_n < 1+d^-_1 \le \sum_{n=1}^{m_1} d^-_n,
    \quad\text{and thus}\quad
    -d^-_{m_1} \le -1-d^-_1 + \sum_{n=1}^{m_1-1} d^-_n < 0.
  \end{equation*}
  For the latter, using the continuity described in \autoref{rem:2x2-theta-minimum}  (last sentence) choose $\theta_{m_1}$ so that 
  \begin{equation*}
    (Df'_{m_1},f'_{m_1}) = -1-d^-_1 + \sum_{n=1}^{m_1-1} d^-_n,
  \end{equation*}
  and therefore
  \begin{equation*} \sum_{n=1}^{m_1} -(Df'_n,f'_n) = \sum_{n=1}^{m_1-1} d^-_n - (Df'_{m_1},f'_{m_1}) = 1+d^-_1 = (Df_1,f_1). \end{equation*}
  We will now inductively define the sequences $\langle m_k \rangle$ and $\langle \theta_n \rangle$ in the following interwoven fashion. 
  Suppose that these sequences are already defined up to $m_{k-1}$ and $\theta_{m_{k-1}}$. 
  Let $m_k$ be the smallest positive integer for which 
  \begin{equation*}
    \sum_{n=m_{k-1}+1}^{m_k} d^-_n \ge 1 - (Df'_k,f'_k) = (Df_k,f_k). 
  \end{equation*}
  Then for $m_{k-1} < n < m_k$, let $\theta_n =  \frac{\arctan d_n}{2}$, and as above let $\theta_{m_k}$ be chosen so as to satisfy
  \begin{equation*}
    \sum_{n=m_{k-1}+1}^{m_k} -(Df'_n,f'_n) = \sum_{n=m_{k-1}+1}^{m_k-1} d^-_n - (Df'_{m_k},f'_{m_k}) = 1 - (Df'_k,f'_k) = (Df_k,f_k).
  \end{equation*}
  
  Finally observe from this that with respect to the basis $\{f_n,f'_n\}_{n\in\mathbb{N}}$ the diagonal sequence of $D$ can be partitioned into finite subsets $\{A_k\}_{k\in\mathbb{N}}$ for which the sum over each subset is zero. 
  Indeed, let $A_k$ consist of the diagonal entries corresponding to the basis elements $\mathfrak{f}_k \coloneqq{}\{f_k,f'_{m_{k-1}+1},\ldots,f'_{m_k}\}$. 
  So for each $k\in\mathbb{N}$ we may apply Lemma~\ref{cor:trace-nlambda-diagonal-lambda} to the collection $\mathfrak{f}_k$ to obtain a new collection of orthonormal vectors $\mathfrak{g}_k$ with $\spans\mathfrak{f}_k = \spans\mathfrak{g}_k$ and the diagonal of $D$ with respect to $\mathfrak{g}_k$ is constantly zero. 
  Thus $D$ has a zero diagonal with respect to the basis $\mathfrak{g} \coloneqq{} \bigcup_k \mathfrak{g}_k$.
\end{proof}

We stated in the introduction that Theorem~\ref{thm:idempotent-zero-diagonal}\ref{item:mthm5} and \ref{item:mthm6} are equivalent for \emph{any} bounded operator, not merely idempotents, which we now prove. 

\begin{proposition}
  \label{prop:sum-iff-abs-sum-diag}
  An operator $T\in B(H)$ has an absolutely summable diagonal in some basis if and only if it has a summable diagonal in some basis.
\end{proposition}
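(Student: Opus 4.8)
The plan is to prove the nontrivial direction: if $T$ has a summable diagonal with respect to some basis, then it has an absolutely summable diagonal with respect to some (possibly different) basis; the reverse implication is immediate since absolute summability implies summability. So suppose $\mathfrak{e} = \{e_n\}_{n\in\mathbb{N}}$ is a basis for which the partial sums $s_n \coloneqq{} \sum_{j=1}^n (Te_j,e_j)$ converge to some $s\in\mathbb{C}$ (with the convention $s_0 = 0$).

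First I would use the convergence to chop $\mathbb{N}$ into consecutive finite blocks whose diagonal sums are tiny. Concretely, choose $0 = n_0 < n_1 < n_2 < \cdots$ so that $\abs{s_{n_k} - s} < 2^{-k}$ for all $k\ge 1$, and set $\sigma_k \coloneqq{} \sum_{j = n_{k-1}+1}^{n_k} (Te_j,e_j) = s_{n_k} - s_{n_{k-1}}$. Then $\abs{\sigma_k} < 2^{-k} + 2^{-(k-1)} = 3\cdot 2^{-k}$ for $k\ge 2$ while $\abs{\sigma_1} \le \abs{s} + \tfrac12$, so in particular $\sum_k \abs{\sigma_k} < \infty$.

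Next, within each block apply Corollary~\ref{cor:trace-nlambda-diagonal-lambda} to the restricted trace. Writing $m_k \coloneqq{} n_k - n_{k-1}$ for the block length and $\lambda_k \coloneqq{} \sigma_k/m_k$, the corollary produces an orthonormal set $\{f_j : n_{k-1} < j \le n_k\}$ with $\spans\{f_j : n_{k-1} < j\le n_k\} = \spans\{e_j : n_{k-1} < j\le n_k\}$ and $(Tf_j,f_j) = \lambda_k$ for every $j$ in the $k$-th block. Since the blocks span mutually orthogonal subspaces whose orthogonal direct sum is all of $H$ (as $\mathfrak{e}$ is a basis), the family $\mathfrak{f} \coloneqq{} \{f_j\}_{j\in\mathbb{N}}$ is again an orthonormal basis for $H$. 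Finally $\sum_j \abs{(Tf_j,f_j)} = \sum_k m_k\abs{\lambda_k} = \sum_k \abs{\sigma_k} < \infty$, so $T$ has an absolutely summable diagonal with respect to $\mathfrak{f}$.

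There is essentially no obstacle here beyond the bookkeeping above; the only point requiring a moment's care is that Corollary~\ref{cor:trace-nlambda-diagonal-lambda} must be invoked with a possibly non-real constant $\lambda_k$, which is harmless since Lemma~\ref{lem:zero-diagonal-trace-zero} (applied there to $PXP - \lambda_k P$) and the underlying Toeplitz--Hausdorff theorem are valid over $\mathbb{C}$. Thus the sole substantive ingredient is the rearrangement-within-blocks afforded by the corollary, the rest being a routine blocking argument driven by the convergence of $\langle s_n \rangle$.
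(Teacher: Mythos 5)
Your proposal is correct and follows essentially the same route as the paper's own proof: choose block endpoints $n_k$ with $\abs{s_{n_k}-s}<2^{-k}$, apply Corollary~\ref{cor:trace-nlambda-diagonal-lambda} blockwise to replace each block's diagonal by its average, and observe that the resulting diagonal is absolutely summable because $\sum_k\abs{s_{n_k}-s_{n_{k-1}}}<\infty$. The only differences are cosmetic (your explicit bound $3\cdot 2^{-k}$ on the block sums versus the paper's telescoping estimate), and your remark that the corollary applies to complex $\lambda$ is a fair point that the paper leaves implicit.
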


\begin{proof}
  One direction is trivial. 
  For the other direction, suppose that $T\in B(H)$ and $\mathfrak{e} \coloneqq{} \{e_n\}_{n\in\mathbb{N}}$ is a basis with respect to which the corresponding diagonal $\langle d_n \rangle$ is summable with sum $s$. 
  Then there exists a strictly increasing sequence of positive integers $\langle n_k \rangle$ with the property that $\abs{s_{n_k}-s} \le 2^{-k}$, where $s_m$ denotes the partial sum $\sum_{n=1}^m d_j$.

  Since $\sum_{j=1}^{n_1} d_j = s_{n_1}$, by Corollary~\ref{cor:trace-nlambda-diagonal-lambda} there is an orthonormal set $\{b_j\}_{j=1}^{n_1}$ for which $\spans\{b_j\}_{j=1}^{n_1} = \spans\{e_j\}_{j=1}^{n_1}$ and $(Tb_j,b_j) = \nicefrac{s_{n_1}}{n_1}$.
Similarly for each $k\in\mathbb{N}$, because $\sum_{j=n_k+1}^{n_{k+1}} d_j = s_{n_{k+1}}-s_{n_k}$ there is an orthonormal set $\{b_j\}_{j=n_k+1}^{n_{k+1}}$ for which $\spans\{b_j\}_{j=n_k+1}^{n_{k+1}}=\spans\{e_j\}_{j=n_k+1}^{n_{k+1}}$ and $(Tb_j,b_j) = \nicefrac{(s_{n_{k+1}}-s_{n_k})}{n_{k+1}-n_k}$.
Thus $\mathfrak{b} \coloneqq{} \{b_j\}_{j=1}^\infty$ is a  basis since $\spans\mathfrak{b} = \spans\mathfrak{e}$. 
For convenience of notation, set $n_0=0=s_{n_0}$. 
Then with respect to the basis $\mathfrak{b}$, the diagonal sequence is absolutely summable since 
\begin{align*}
\sum_{j=1}^\infty \abs{(Tb_j,b_j)} &= \sum_{k=0}^\infty \sum_{j=n_k+1}^{n_{k+1}} \abs{(Tb_j,b_j)} \\ 
&= \sum_{k=0}^\infty \sum_{j=n_k+1}^{n_{k+1}} \frac{\sabs{s_{n_{k+1}}-s_{n_k}}}{n_{k+1}-n_k} \\
&= \sum_{k=0}^\infty \abs{s_{n_{k+1}}-s_{n_k}} \\
&= \abs{s_{n_1}} + \sum_{k=1}^\infty \left(\abs{s_{n_{k+1}}-s} + \abs{s-s_{n_k}}\right)  \\
&\le \abs{s_{n_1}} + \sum_{k=1}^\infty \left(2^{-(k+1)} + 2^{-k}\right) \\
&= \abs{s_{n_1}} + \frac{3}{2}. \qedhere
\end{align*}
\end{proof}

\section{DIAGONALS OF THE CLASS OF IDEMPOTENTS AND APPLICATIONS}

In this section we investigate Jasper's initial frame theory problem concerning dual frame pairs via its equivalent operator-theoretic formulation:

\begin{problem}
  Characterize the diagonals of the class of idempotent operators.
\end{problem}

\noindent In particular, we prove that every bounded sequence appears as the diagonal of some idempotent  (Theorem~\ref{thm:idempotent-diagonals-ell-infty}). 
We prove this result in stages. First we consider diagonals of idempotents in $M_2(\mathbb{C})$ (Lemma~\ref{lem:2x2-lemma}). Then we give a direct sum construction of an idempotent with constant diagonal (Proposition~\ref{prop:idempotent-constant-diagonal}). From this we show that any bounded sequence with at least one value repeated infinitely many times appears as the diagonal of some idempotent (Proposition~\ref{prop:infinite-multiplicity-diagonal}). And we conclude by showing that we may obtain any bounded sequence as the diagonal of an idempotent. 

The following technical lemma is a trivial corollary of Theorem~\ref{thm:idempotent-matrix-diagonals} except for its norm bound which we require for the forthcoming results. 

\begin{lemma}
  \label{lem:2x2-lemma}
  If $d\in\mathbb{C}$, then there is a $2\times 2$ idempotent $D\in M_2(\mathbb{C})$ with norm $\snorm{D} \le 6\abs{d}+4$ which takes the values $3d-1,-3d+2$ on its diagonal. 
\end{lemma}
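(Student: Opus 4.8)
The plan is to write down an explicit $2\times 2$ idempotent with the prescribed diagonal and then bound its norm by hand. A general idempotent $D \in M_2(\mathbb C)$ that is neither $0$ nor $I$ has $\trace D = 1$, so if its diagonal is $(3d-1, -3d+2)$ we automatically get the right trace. The off-diagonal entries are constrained by the idempotency relation $D^2 = D$: writing $D = \begin{psmallmatrix} 3d-1 & b \\ c & -3d+2 \end{psmallmatrix}$, the condition $D^2 = D$ is equivalent to $\det D = 0$ together with $\trace D = 1$ (for a $2\times 2$ matrix, $D^2 = (\trace D) D - (\det D) I$, so $D^2 = D$ iff $\det D = 0$ and $\trace D = 1$, excluding the scalar cases). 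Thus I only need $bc = (3d-1)(-3d+2)$, i.e.\ the product of the off-diagonal entries is a fixed number $p \coloneqq (3d-1)(2-3d)$.

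First I would choose the off-diagonal entries to balance the norm: take $b = c = \sqrt{p}$ if $p \ge 0$ is real, but since $p$ is an arbitrary complex number in general (as $d \in \mathbb C$), it is cleaner to set, say, $b = 1$ and $c = p = (3d-1)(2-3d)$, or better, to keep both entries of comparable size, pick $b$ and $c$ with $|b| = |c| = |p|^{1/2}$ via $b = |p|^{1/2}$, $c = p/|p|^{1/2} = \overline{(p/|p|^{1/2})}^{\,*}$-adjusted so that $bc = p$; concretely $b = |p|^{1/2}$ and $c = p \, |p|^{-1/2}$ works whenever $p \neq 0$, and when $p = 0$ we may just take $b = c = 0$ (then $D$ is diagonal and idempotent only if $3d-1, 2-3d \in \{0,1\}$, which happens exactly at the degenerate values of $d$; in those cases one instead takes $b=1, c=0$ or similar to still get an idempotent with the right diagonal — I'd check the finitely many edge cases directly). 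With this choice $\snorm{D} \le \snorm{D}_{\mathrm{HS}} = \big(|3d-1|^2 + |2-3d|^2 + 2|p|\big)^{1/2}$.

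Next I would estimate this Hilbert–Schmidt norm crudely. We have $|3d-1| \le 3|d| + 1$ and $|2 - 3d| \le 3|d| + 2$, so $|p| = |3d-1|\,|2-3d| \le (3|d|+1)(3|d|+2) \le (3|d|+2)^2$. Hence $|3d-1|^2 + |2-3d|^2 + 2|p| \le 2(3|d|+2)^2 + 2(3|d|+2)^2 = 4(3|d|+2)^2$, giving $\snorm{D} \le 2(3|d|+2) = 6|d| + 4$, exactly the claimed bound. (If one worries the HS bound is too lossy one can instead bound the operator norm directly via $\snorm{D} \le \max(|3d-1|,|2-3d|) + \max(|b|,|c|)$ using the triangle inequality on $D$ as a sum of its diagonal and off-diagonal parts, which also yields something of the same order; either way the constant $6,4$ is generous enough.)

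The only real obstacle is the bookkeeping around the degenerate cases $p = 0$, i.e.\ $d = \tfrac13$ or $d = \tfrac23$: there the "obvious" diagonal idempotent has diagonal $(0,1)$ or $(1,0)$ — which is already the desired diagonal up to order — but we should double check that such a matrix (e.g.\ $\begin{psmallmatrix} 0 & 0 \\ 1 & 0\end{psmallmatrix}$ won't work since it's nilpotent, but $\begin{psmallmatrix} 0 & 0 \\ 0 & 1 \end{psmallmatrix}$ is idempotent with diagonal $(0,1)$, or $\begin{psmallmatrix} 1 & 0 \\ 0 & 0 \end{psmallmatrix}$ for $(1,0)$) indeed exists and satisfies the norm bound, which is immediate since its norm is $1 \le 6|d| + 4$. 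So in all cases we produce the required idempotent. I expect the write-up to be short: state the candidate $D$, verify $D^2 = D$ via the $\det = 0$, $\trace = 1$ characterization, read off the diagonal, and finish with the two-line norm estimate.
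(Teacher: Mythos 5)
Your proof is correct, but it takes a genuinely different route from the paper's. The paper starts from the canonical idempotent $\begin{psmallmatrix} 1 & 0 \\ z & 0 \end{psmallmatrix}$ and conjugates by the unitary rotation $R_{\nicefrac{\pi}{4}}$, which produces the matrix with all four entries $\nicefrac{(1\pm z)}{2}$; choosing $z = 6d-3$ gives the diagonal $(2-3d,\,3d-1)$ and the bound $\snorm{D} = \snorm{D_z} \le 1 + \abs{z} \le 6\abs{d}+4$ comes for free from unitary invariance. You instead prescribe the diagonal from the outset and solve for the off-diagonal entries via the Cayley--Hamilton characterization ($D^2 = D$ for a $2\times 2$ matrix with $\trace D = 1$ iff $\det D = 0$), balancing $\abs{b}=\abs{c}=\abs{p}^{1/2}$ and then estimating the operator norm by the Hilbert--Schmidt norm; your computation $\snorm{D}_{\mathrm{HS}}^2 \le 4(3\abs{d}+2)^2$ does land exactly on $6\abs{d}+4$, and your handling of the degenerate cases $d = \nicefrac13, \nicefrac23$ via the diagonal projections is fine. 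The paper's approach buys a one-line norm bound and a construction that meshes with the idempotent normal form used throughout the paper (Lemma~\ref{lem:idempotent-decomp}); yours buys directness --- no need to guess a conjugation --- at the cost of a case split and a slightly more delicate norm estimate, and it makes transparent that the trace condition is the \emph{only} constraint on a $2\times 2$ idempotent's diagonal. Both are complete proofs of the stated lemma.
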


\begin{proof}
  Start with the idempotent 
  \begin{equation*}
  D_z =
  \begin{pmatrix}
    1 & 0 \\
    z & 0 \\
  \end{pmatrix},
  \end{equation*}
  with $z\in\mathbb{C}$ to be chosen later. 
  Conjugating by the (unitary) rotation matrix $R_{\nicefrac{\pi}{4}}$, one obtains
  \begin{align*}
    R_{\nicefrac{\pi}{4}}D_zR_{-\nicefrac{\pi}{4}} &= 
    \begin{pmatrix}
      \frac{1}{\sqrt{2}} & -\frac{1}{\sqrt{2}} \\
      \frac{1}{\sqrt{2}} & \frac{1}{\sqrt{2}} \\
    \end{pmatrix}
    \begin{pmatrix}
      1 & 0 \\
      z & 0 \\
    \end{pmatrix}
    \begin{pmatrix}
      \frac{1}{\sqrt{2}} & \frac{1}{\sqrt{2}} \\
      -\frac{1}{\sqrt{2}} & \frac{1}{\sqrt{2}} \\
    \end{pmatrix}
    =
    \begin{pmatrix}
      \frac{1-z}{2} & \frac{1-z}{2} \\
      \frac{1+z}{2} & \frac{1+z}{2} \\
    \end{pmatrix}
  \end{align*}
  Choosing $z = 6d-3$ gives the correct diagonal values. 
  Furthermore, $\snorm{D_z} \le 1 + \abs{z} \le 6d + 4$. 
  Then $D = R_{\nicefrac{\pi}{4}}D_zR_{-\nicefrac{\pi}{4}}$ gives our required idempotent. 
\end{proof}

In the next proposition we exhibit an idempotent with constant diagonal $d$. 
The idea is to take an infinite direct sum of the $2\times 2$ matrix $D$ from Lemma~\ref{lem:2x2-lemma} (whose diagonal entries $d_1,d_2$ satisfy $2d_1 + d_2 = 3d$), regroup the diagonal entries and apply Corollary~\ref{cor:trace-nlambda-diagonal-lambda} repeatedly.

\begin{proposition}
  \label{prop:idempotent-constant-diagonal}
  Given $d\in\mathbb{C}$, there is an idempotent $D_d\in B(H)$ with norm $\norm{D_d} \le 6\abs{d}+4$ with constant diagonal $d$ in some basis. 
\end{proposition}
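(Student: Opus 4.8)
The plan is to build $D_d$ as an infinite direct sum of copies of the $2\times2$ idempotent $D$ from Lemma~\ref{lem:2x2-lemma}, and then repartition the resulting diagonal into finite blocks with average $d$, applying Corollary~\ref{cor:trace-nlambda-diagonal-lambda} to each block to flatten it to the constant value $d$. First I would take the $2\times2$ idempotent $D\in M_2(\mathbb{C})$ supplied by Lemma~\ref{lem:2x2-lemma} with diagonal values $d_1 = 3d-1$ and $d_2 = -3d+2$, so that $2d_1 + d_2 = 3d$ and $\snorm{D}\le 6\abs{d}+4$. Set $D_d \coloneqq{} \bigoplus_{k=1}^\infty D$ acting on $H = \bigoplus_{k=1}^\infty \mathbb{C}^2$; this is an idempotent since each summand is, and $\norm{D_d} = \snorm{D} \le 6\abs{d}+4$ because the norm of a direct sum is the supremum of the norms of the summands. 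With respect to the obvious basis $\mathfrak{e} = \{e_1^{(k)}, e_2^{(k)}\}_{k\in\mathbb{N}}$, the diagonal of $D_d$ is the sequence $d_1, d_2, d_1, d_2, d_1, d_2, \ldots$.

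Next I would regroup this diagonal into consecutive blocks of length three, each block containing two entries equal to $d_1$ and one equal to $d_2$ — concretely, the $k$-th block corresponds to the basis vectors $\{e_1^{(2k-1)}, e_2^{(2k-1)}, e_1^{(2k)}\}$ for the entries $(d_1, d_2, d_1)$ together with the leftover $e_2^{(2k)}$ paired into the next block, or more cleanly, index the basis so that block $k$ uses three basis vectors whose diagonal entries are $d_1, d_1, d_2$ in some order. (A clean bookkeeping: list the diagonal as $d_1, d_2, d_1, d_2, \dots$ and group positions $\{1,3,2\}, \{5,7,4\}$ and so on, so that each group has two $d_1$'s and one $d_2$; since $d_1$ appears with density $1/2$ and $d_2$ with density $1/2$, and we want ratio $2{:}1$, we actually need to interleave more carefully, so instead I would use $\bigoplus_k D$ but first relabel, or simply take the direct sum with multiplicities arranged so the $2{:}1$ ratio is exact.) For each block $B_k$, the restricted trace of $D_d$ over the three basis vectors in $B_k$ equals $2d_1 + d_2 = 3d = 3\cdot d$, so Corollary~\ref{cor:trace-nlambda-diagonal-lambda} produces an orthonormal set $\{f^{(k)}_1, f^{(k)}_2, f^{(k)}_3\}$ spanning the same three-dimensional space as $B_k$ with $(D_d f^{(k)}_j, f^{(k)}_j) = d$ for $j = 1,2,3$. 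The union $\mathfrak{b} \coloneqq{} \bigcup_k \{f^{(k)}_1, f^{(k)}_2, f^{(k)}_3\}$ is then a basis for $H$ (same span as $\mathfrak{e}$) in which $D_d$ has constant diagonal $d$, and conjugation by a unitary does not change the norm, so $\norm{D_d}\le 6\abs{d}+4$ still holds.

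The only genuinely delicate point is arranging the $2{:}1$ multiplicity ratio exactly so that every finite block has restricted trace precisely $3d$; this is pure bookkeeping and can be handled either by taking the direct sum $\bigoplus_k D$ and regrouping the $d_1, d_2, d_1, d_2, \dots$ pattern into blocks $\{d_1, d_2, \text{(next } d_1)\}$ using every entry exactly once (which works since in positions $1,2,\dots$ the odd positions carry $d_1$ and even positions carry $d_2$, and one can form blocks $\{2k-1, 2k, 2k+1\}$ — wait, that reuses — so instead blocks of the full six-term pattern $d_1,d_2,d_1,d_2,d_1,d_2$ split as two groups $\{d_1,d_1,d_2\}$), or more simply by observing that Corollary~\ref{cor:trace-nlambda-diagonal-lambda} only requires the average over each finite block to equal $d$, and any finite collection of the $e_j$'s whose diagonal entries sum to a multiple of $d$ suffices; since we can always extend a current block by a few more basis vectors until its partial sum of diagonal entries is $3m\cdot d$ for some integer $m$ (which happens infinitely often as partial sums run through $d_1, d_1+d_2 = 3d - \ldots$, hmm — in fact the partial sums of $d_1, d_2, d_1, d_2,\dots$ hit $3d$ at position $3$, $6d$ at position $6$, etc., precisely the multiples of $3$), the construction goes through. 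I expect no real obstacle beyond making this indexing airtight, as every substantive tool — the $2\times2$ building block and the trace-to-constant-diagonal flattening — is already in hand.
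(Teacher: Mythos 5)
Your approach is the same as the paper's: take $\bigoplus_{k=1}^\infty D$ with $D$ from Lemma~\ref{lem:2x2-lemma}, regroup the diagonal into triples of the form $\{d_1,d_1,d_2\}$ (sum $2d_1+d_2=3d$), and flatten each triple with Corollary~\ref{cor:trace-nlambda-diagonal-lambda}; the norm bookkeeping is also correct. The paper resolves the $2{:}1$ versus $1{:}1$ issue that worries you by permuting the basis so that the $d_2$-entries occupy exactly the positions $3\mathbb{N}$, which is possible because both values occur infinitely often --- ``density'' in the original ordering is irrelevant. Your first scheme, grouping positions $\{1,3,2\},\{5,7,4\},\ldots$, i.e.\ $\{4k-3,4k-1,2k\}$, is exactly this permutation in disguise and is correct: it partitions $\mathbb{N}$ into triples with two odd and one even index, and Corollary~\ref{cor:trace-nlambda-diagonal-lambda} does not require the indices in a block to be consecutive. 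You should commit to that scheme and discard the two fallbacks you float afterwards, both of which are wrong: six consecutive entries $d_1,d_2,d_1,d_2,d_1,d_2$ contain only three $d_1$'s, so they cannot be split into two triples each of type $\{d_1,d_1,d_2\}$; and since $d_1+d_2=1$, the partial sum of the unpermuted diagonal at position $2n$ is $n$ (in particular $3$ at position $6$, not $6d$), so the partial sums do not hit multiples of $3d$ at positions $3,6,9,\ldots$ except for special $d$.
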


\begin{proof}
  Let $D'$ be the $2\times 2$ idempotent matrix obtained from Lemma~\ref{lem:2x2-lemma} and set $D = \bigoplus_{i=1}^\infty D'$. 
  Then the diagonal of $D$ consists of the values $d_1 = 3d-1$ and $d_2 = -3d+2$, each repeated infinitely many times. 
  With respect to the basis $\mathfrak{e} \coloneqq{} \{e_j\}_{j\in\mathbb{N}}$, the diagonal entries are
  \begin{equation*}
  (De_j,e_j) = 
  \begin{cases}
    d_1 & \text{if $j$ is odd,} \\
    d_2 & \text{if $j$ is even,} \\
  \end{cases}
  \end{equation*}
  and these diagonal entries satisfy $2d_1 + d_2 = 3d$. 
  Let $\pi$ be any permutation of $\mathbb{N}$ which sends $2\mathbb{N}$ onto $3\mathbb{N}$ (i.e., maps the even positive integers to positive multiples of three).
  Create a new basis $\mathfrak{f}:=\{f_j\}_{j\in\mathbb{N}}$ by $f_j := e_{\pi^{-1}(j)}$. 
  Then we have 
  \begin{equation*}
  (Df_j,f_j) = (De_{\pi^{-1}(j)},e_{\pi^{-1}(j)}) = 
  \begin{cases}
    d_1 & \text{if $j\in\mathbb{N}\setminus 3\mathbb{N}$,} \\
    d_2 & \text{if $j\in 3\mathbb{N}$.} \\
  \end{cases}
  \end{equation*}
  For each $j\in 3\mathbb{N}$, the sum of the diagonal entries corresponding to $f_{j-2},f_{j-1},f_j$ is $2d_1+d_2 = 3d$.
  Thus for each $j\in 3\mathbb{N}$ we may apply Corollary~\ref{cor:trace-nlambda-diagonal-lambda} to obtain new orthonormal vectors $g_{j-2},g_{j-1},g_j$ with $\spans\{f_{j-2},f_{j-1},f_j\} = \spans\{g_{j-2},g_{j-1},g_j\}$ (hence $\mathfrak{g}:=\{g_k\}_{k\in\mathbb{N}}$ is a basis) and $(Dg_k,g_k) = d$ for any $k\in\mathbb{N}$. 
  Taking $D_d:= D$ with respect to the basis $\mathfrak{g}$ is the required idempotent.
\end{proof}

Using Proposition~\ref{prop:idempotent-constant-diagonal} we will now prove that any bounded sequence with at least one value repeated infinitely many times appears as the diagonal of some idempotent. 

\begin{proposition}
  \label{prop:infinite-multiplicity-diagonal}
  Suppose $d := \langle d_j \rangle \in \ell^\infty$ and for some $m$ one has $d_m=d_k$ for infinitely many $k\in\mathbb{N}$.
Then there exists an idempotent $D\in B(H)$ with diagonal $d$ for which $\norm{D} \le 18\norm{d}_\infty+4$. 
\end{proposition}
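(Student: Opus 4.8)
The plan is to reduce, by a direct-sum argument, to the problem of injecting a single prescribed value into an infinite ``sea'' of copies of the repeated value, and then to solve that problem by mimicking the proof of Proposition~\ref{prop:idempotent-constant-diagonal}. First I would set $c \coloneqq d_m$, let $S \coloneqq \{k : d_k = c\}$ (an infinite set), and observe that since $\abs{S}=\aleph_0$ and the non-$c$ indices number at most $\aleph_0$, one can partition $S$ into pairwise disjoint \emph{infinite} subsets $\{S_j\}_{j\notin S}$, one for each non-$c$ index. If there are no non-$c$ indices then $\langle d_n\rangle$ is the constant sequence $c$ and Proposition~\ref{prop:idempotent-constant-diagonal} already applies. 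Otherwise it suffices to construct, for each value $d' = d_j$ with $j\notin S$, an idempotent $D^{(j)}\in B(H)$ whose diagonal in some basis indexed by $\{j\}\cup S_j$ is $d'$ exactly once and $c$ on all of $S_j$, with $\norm{D^{(j)}}\le 18\norm{d}_\infty+4$: the direct sum $\bigoplus_{j\notin S}D^{(j)}$ then has, after relabeling the basis by $\mathbb{N}$, diagonal exactly $\langle d_n\rangle$ (the two multisets of diagonal values agree), and $\norm{\bigoplus_j D^{(j)}}=\sup_j\norm{D^{(j)}}\le 18\norm{d}_\infty+4$ (the Proposition~\ref{prop:idempotent-constant-diagonal} idempotent used in the degenerate case has norm $\le 6\abs{c}+4\le 18\norm{d}_\infty+4$).

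For the building block $D^{(j)}$ I would follow the template of Proposition~\ref{prop:idempotent-constant-diagonal}: form a countable direct sum of $2\times 2$ idempotents supplied by Lemma~\ref{lem:2x2-lemma}, most of them being the idempotent of parameter $c$ (diagonal $\langle 3c-1,\,2-3c\rangle$), together with finitely many extra ones of parameter of magnitude at most $3\norm{d}_\infty$ to accommodate $d'$, and then regroup the resulting diagonal sequence into finite blocks via Corollary~\ref{cor:trace-nlambda-diagonal-lambda}. The relevant elementary identities are that a block of two entries $3c-1$ and one entry $2-3c$ has sum $3c$, so re-bases to the constant $c$, while a block $\langle 3d'-1,\,3c-1,\,2-3c\rangle$ has sum $3d'$, so re-bases to the constant $d'$. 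Choosing the extra $2\times 2$ blocks so that exactly one regrouped block re-bases to $d'$ and all others to $c$ produces an idempotent with the desired diagonal; since every $2\times 2$ idempotent used has parameter of magnitude $\le 3\norm{d}_\infty$, each has norm at most $6\cdot 3\norm{d}_\infty+4=18\norm{d}_\infty+4$ by Lemma~\ref{lem:2x2-lemma}, and hence so does the direct sum and the regrouped operator.

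The hard part will be the bookkeeping forced by the integer-trace constraint on finite idempotents: the extra $2\times 2$ blocks introduced to manufacture $d'$ inevitably generate ``leftover'' diagonal entries that are not yet equal to $c$ or $d'$, and these must be swept into additional blocks that re-base to $c$ — grouping them in pairs summing to $2c$, padded with entries $3c-1$ and $2-3c$, so that the block sums come out right — all while keeping every $2\times 2$ idempotent's parameter bounded by $3\norm{d}_\infty$. Making the count of entries balance exactly, so that no leftover entry is stranded and no spurious value survives on the diagonal, is where the argument must be done carefully; it is made feasible precisely because $c$ occurs with infinite multiplicity in the target, which furnishes an unlimited reservoir of $3c-1$ and $2-3c$ entries to absorb the leftovers against.
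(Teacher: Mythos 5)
Your outer reduction --- one building block per non-$c$ index, each injecting that value once into an infinite sea of $c$'s, then a direct sum and a relabeling --- is exactly the architecture of the paper's proof, and that part is sound. The gap is in the building block, which you defer to ``hard bookkeeping''; that bookkeeping is where the proof actually lives, and your sketch of it does not close. Two concrete problems. First, your only mechanism for producing the value $d'$ is the regrouped block $\langle 3d'-1,\,3c-1,\,2-3c\rangle$ re-based via Corollary~\ref{cor:trace-nlambda-diagonal-lambda}; but that corollary makes the diagonal \emph{constant} on the block, so you obtain $d'$ with multiplicity three rather than once, and the multiset count in your direct sum fails (for $d=\langle 5,0,0,\dots\rangle$ you would produce three $5$'s where the target has one). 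Second, if you instead arrange an entry literally equal to $d'$ (say from a $2\times 2$ block of parameter $\nicefrac{(d'+1)}{3}$, whose companion entry is $1-d'$), the leftover $1-d'$ must be swept into a block averaging $c$ together with $a$ copies of $3c-1$ and $b$ copies of $2-3c$; the requirement is a single \emph{complex} linear equation in the nonnegative integers $a,b$ (for $c=0$ it reads $a-2b=1-d'$), which has no solution for generic $d'$. Chasing the leftover with further extra $2\times 2$ blocks only regenerates a new leftover each time, so finitely many extras never terminate the process, and parameters such as $\nicefrac{(d'+1)}{3}$ need not have modulus at most $3\norm{d}_\infty$, so the norm bound is not secured either.

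The paper dissolves all of this by choosing the building block one level up, at Proposition~\ref{prop:idempotent-constant-diagonal} rather than Lemma~\ref{lem:2x2-lemma}: for each $j$ it takes $D_{d_j}\oplus D_{2d_m-d_j}$, keeps a single basis vector carrying the entry $d_j$, and pairs every remaining $d_j$-entry bijectively with a $(2d_m-d_j)$-entry; each pair sums to $2d_m$ and re-bases to two copies of $d_m$ by Corollary~\ref{cor:trace-nlambda-diagonal-lambda}. The infinite multiplicity of both constant diagonals makes the pairing exact with no stranded entries, and the bound $\snorm{D_{2d_m-d_j}}\le 6\abs{2d_m-d_j}+4\le 18\norm{d}_\infty+4$ is immediate. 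Replacing your $2\times 2$-level bookkeeping with this reflected-parameter pairing repairs the argument.
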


\begin{proof}
  Observe that the direct sum of idempotents from Proposition~\ref{prop:idempotent-constant-diagonal}:
  \begin{equation*}
  D := \bigoplus_{j=1}^\infty (D_{d_j} \oplus D_{-d_j+2d_m}),
  \end{equation*}
  is a bounded operator whose norm satisfies 
  \begin{align*}
    \norm{D} &= \sup_{j} \{\snorm{D_{d_j}},\snorm{D_{-d_j+2d_m}} \} \\
    &\le \sup_{j} \{6\abs{d_j}+4,6\abs{-d_j+2d_m}+4 \} \\
    &\le 18\snorm{d}_\infty+4. 
  \end{align*}
The idempotent $D$ comes with an associated basis $\mathfrak{e}:=\{e_{i,j,k} \mid i=1,2;\ j,k\in\mathbb{N}\}$ with respect to which the diagonal is 
  \begin{equation*}
  (De_{i,j,k},e_{i,j,k}) = 
  \begin{cases}
    d_j & \text{if $i=1$,} \\
    -d_j+2d_m & \text{if $i=2$.} \\
  \end{cases}
  \end{equation*}
  Create a new basis by the following procedure. 
  Set $f_j := e_{1,j,1}$, so that $(Df_j,f_j) = d_j$. 
  Then for each $j,k\in\mathbb{N}$, apply Corollary~\ref{cor:trace-nlambda-diagonal-lambda} to the pair $e_{1,j,k+1},e_{2,j,k}$ to obtain orthonormal vectors $g_{1,j,k},g_{2,j,k}$ with the same span and corresponding diagonal entries $d_m = \frac{1}{2}(d_j+(-d_j+2d_m))$. 
  Then $\mathfrak{g} := \{f_j\}_{j\in\mathbb{N}} \cup \{g_{i,j,k} \mid i=1,2;\ j,k\in\mathbb{N}\}$ is a basis with diagonal entries $d=\langle d_j \rangle$ (from the $f_j$) along with $d_m$ with infinite multiplicity (from the $g_{i,j,k}$).
  Since the value $d_m$ is repeatedly infinitely many times in the sequence $d$, after a suitable relabeling (permutation of the basis), the diagonal is precisely the sequence $d$. 
\end{proof}

Before we prove our main result for this section we need  Fan's quantitative version of the Toeplitz--Hausdorff Theorem on the convexity of the numerical range. 
As a matter of notation, throughout the remainder of this paper we will use $[a,b]$ to denote the complex line segment joining $a,b \in \mathbb{C}$. 
Then each $d \in [a,b]$ has a \emph{convexity coefficient} $\lambda$ defined by $d = \lambda a + (1-\lambda)b$ for $0 \le \lambda \le 1$, with the convention that $\lambda = 0$ when $a=b$. 
Equivalently, $\lambda = \frac{b-d}{b-a}$ if $b \not= a$ and $\lambda = 0$ if $b = a$. 

\begin{lemma}[\protect{\cite[Lemma 3]{Fan-1984-TotAMS}}]
  \label{lem:quantitative-toeplitz-hausdorff}
  Let 
  \begin{equation*}
  A = 
  \begin{pmatrix}
    d_1       & {\Huge *} \\
    {\Huge *} & d_2       \\
  \end{pmatrix}
  \in M_2(\mathbb{C})
  \end{equation*}
  be a matrix with respect to the basis $\{e_1,e_2\}$ and let $d \in [d_1,d_2]$ with convexity coefficient $\lambda$.
Then there exists a basis $\{b,f\}$ for which $(Af,f)=d$, $(Ab,b)=d_1+d_2-d$ and $\abs{(e_1,f)}^2 \le \lambda$. 
\end{lemma}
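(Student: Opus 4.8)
The plan is to build the target vector $f$ directly as a ``rotation of $e_2$ toward $e_1$'' governed by a single real parameter, and then simply let $b$ be any unit vector orthogonal to $f$. The conditions on $b$ then come for free: the quantity $(Ae_1,e_1)+(Ae_2,e_2)=d_1+d_2$ is the trace of $A$, hence unchanged under passage to any other orthonormal basis $\{b,f\}$ of $\mathbb{C}^2$, so automatically $(Ab,b)=(d_1+d_2)-(Af,f)=d_1+d_2-d$ and $\{b,f\}$ is a basis. Thus the whole lemma reduces to producing a unit vector $f$ with $(Af,f)=d$ and $\abs{(e_1,f)}^2\le\lambda$.

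First I would fix $A=\begin{psmallmatrix}d_1&a\\ c&d_2\end{psmallmatrix}$ and, for $t\in[0,1]$ and $\delta\in\mathbb{R}$, set $f_{t,\delta}\coloneqq{}\sqrt{t}\,e_1+\sqrt{1-t}\,\mathrm{e}^{\mathrm{i}\delta}e_2$. A one-line computation gives $\abs{(e_1,f_{t,\delta})}^2=t$ and $(Af_{t,\delta},f_{t,\delta})=td_1+(1-t)d_2+\sqrt{t(1-t)}\,\gamma(\delta)$, where $\gamma(\delta)\coloneqq{}a\mathrm{e}^{\mathrm{i}\delta}+c\mathrm{e}^{-\mathrm{i}\delta}$. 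So it is enough to find $t\le\lambda$ and $\delta$ with
\begin{equation*}
\sqrt{t(1-t)}\,\gamma(\delta)=(\lambda-t)(d_1-d_2).
\end{equation*}
The boundary values $\lambda\in\{0,1\}$ are trivial, since the convention on $\lambda$ forces $d=d_2$ when $\lambda=0$ and $d=d_1$ when $\lambda=1$, so $f=e_2$ resp. $f=e_1$ works; henceforth $0<\lambda<1$, whence $d_1\ne d_2$. Here I would invoke the standard fact that $\{\gamma(\delta):\delta\in\mathbb{R}\}$ is an ellipse centered at the origin (possibly degenerating to a segment through the origin, or to $\{0\}$). Writing $v\coloneqq{}(d_1-d_2)/\abs{d_1-d_2}$, the quantity $\rho\coloneqq{}\max\{s\ge 0:sv\in\{\gamma(\delta)\}\}$ is then well defined, finite, and attained.

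The heart of the argument is an intermediate value step. If $\rho>0$, pick $\delta_0$ with $\gamma(\delta_0)=\rho v$ and consider $g(t)\coloneqq{}(\lambda-t)\abs{d_1-d_2}-\rho\sqrt{t(1-t)}$ on $[0,\lambda]$; since $g(0)>0$ and $g(\lambda)<0$, some $t_0\in(0,\lambda)$ satisfies $g(t_0)=0$, which is exactly the displayed equation for $f\coloneqq{} f_{t_0,\delta_0}$, with the strict bound $\abs{(e_1,f)}^2=t_0<\lambda$. If instead $\rho=0$, then the ellipse has degenerated and must contain the origin, so $\gamma(\delta_0)=0$ for some $\delta_0$; taking $t_0=\lambda$ and $f\coloneqq{} f_{\lambda,\delta_0}$ gives $(Af,f)=\lambda d_1+(1-\lambda)d_2=d$ with equality $\abs{(e_1,f)}^2=\lambda$. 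In either case, putting $b$ equal to a unit vector orthogonal to $f$ finishes the proof.

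The step I expect to be the main obstacle is precisely the $\rho=0$ case: one must recognize that when the off-diagonal data collapses $\{\gamma(\delta)\}$ to a segment missing the direction $d_1-d_2$ there is no hope of a strict inequality, and that one must instead land exactly at the parameter value $t_0=\lambda$ — which is always possible because such a degenerate ellipse necessarily passes through the origin. This is also what makes the bound $\abs{(e_1,f)}^2\le\lambda$ sharp and not improvable to a strict inequality. A secondary technical point is the boundary analysis of $g$, which must be done on $[0,\lambda]$ (not $[0,1]$) so as to force $t_0\le\lambda$.
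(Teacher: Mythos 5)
Your argument is correct, and it is worth pointing out that the paper does not actually prove this lemma --- it is imported verbatim as Lemma 3 of Fan's 1984 paper --- so there is no in-text proof to compare against; you have in effect supplied the missing argument. The reduction via trace invariance (so that only $f$ needs to be constructed), the parametrization $f_{t,\delta}=\sqrt{t}\,e_1+\sqrt{1-t}\,\mathrm{e}^{\mathrm{i}\delta}e_2$ reducing everything to the displayed equation, and the dichotomy on $\rho$ are all sound, and you correctly isolate the degenerate case $\rho=0$ as the reason the conclusion is $\abs{(e_1,f)}^2\le\lambda$ rather than a strict inequality. The one point you should make explicit is why $\rho$ is well defined, i.e., why the set $\{s\ge 0: sv\in\gamma(\mathbb{R})\}$ is nonempty: writing $A=\begin{psmallmatrix}d_1&a\\ c&d_2\end{psmallmatrix}$, if $\abs{a}\ne\abs{c}$ (in particular if exactly one of them vanishes) the image of $\gamma$ is a closed curve (ellipse or circle) enclosing the origin, so the ray in direction $v$ meets it at some $s>0$; if $\abs{a}=\abs{c}$ the image is a segment through the origin (or the single point $0$), so $s=0$ lies in the set. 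This is implicit in your appeal to the shape of $\{\gamma(\delta)\}$, but since the entire case split hinges on $\rho$ being an attained maximum, it deserves a sentence.
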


We bootstrap this lemma to modify diagonals in an interesting useful way in \autoref{lem:bootstrap-fan}. 
A main tool is to use the following lemma in the case all convexity coefficients $\lambda_n \equiv \nicefrac{1}{2}$ to prove both \autoref{thm:idempotent-diagonals-ell-infty} and \autoref{thm:finite-rank-idempotent-diagonals}. 

\begin{lemma}
  \label{lem:bootstrap-fan}
  Suppose that $T$ is an operator and $\mathfrak{e} = \{e_n\}_{n=0}^{\infty}$ an orthonormal set. 
  Let $r_n \coloneqq{} (Te_n,e_n)$ and suppose $\angles{d_n}_{n=1}^{\infty}$ is a sequence such that for $n \ge 1$, $d_n \in [d_{n-1},r_n]$ with convexity coefficient $\lambda_n$ and where $d_0 \coloneqq{} r_0$.
  Then there is an orthonormal set $\mathfrak{b} = \{b_n\}_{n=1}^{\infty}$ for which $(Tb_n,b_n) = r_n + d_{n-1} - d_n$. 
  Moreover, if $\prod_{i=n}^{\infty} \lambda_i = 0$ for all $n \in \mathbb{N}$, then $\spans \mathfrak{b} = \spans \mathfrak{e}$. 
\end{lemma}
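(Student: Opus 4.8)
The plan is to build the orthonormal set $\mathfrak{b}$ inductively, applying Lemma~\ref{lem:quantitative-toeplitz-hausdorff} one index at a time, and to keep track of a single ``leftover'' vector at each stage that carries the discrepancy $d_n$ forward. First I would set $g_0 \coloneqq e_0$ and note $(Tg_0,g_0) = r_0 = d_0$. At stage $n \ge 1$, I will have an orthonormal vector $g_{n-1}$ with $(Tg_{n-1},g_{n-1}) = d_{n-1}$, lying in $\spans\{e_0,\dots,e_{n-1}\}$, together with the fact that $\spans\{b_1,\dots,b_{n-1},g_{n-1}\} = \spans\{e_0,\dots,e_{n-1}\}$. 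Apply Lemma~\ref{lem:quantitative-toeplitz-hausdorff} to the $2\times 2$ compression of $T$ to $\spans\{g_{n-1},e_n\}$, whose diagonal in that basis is $(d_{n-1},r_n)$: since $d_n \in [d_{n-1},r_n]$ with convexity coefficient $\lambda_n$, the lemma yields an orthonormal pair $\{b_n,g_n\}$ spanning $\spans\{g_{n-1},e_n\}$ with $(Tb_n,b_n) = d_{n-1} + r_n - d_n$, $(Tg_n,g_n) = d_n$, and $\abs{(g_{n-1},g_n)}^2 \le \lambda_n$ (using the roles $e_1 \mapsto g_{n-1}$, $f \mapsto g_n$, $b \mapsto b_n$ in the statement of Lemma~\ref{lem:quantitative-toeplitz-hausdorff}). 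This gives $(Tb_n,b_n) = r_n + d_{n-1} - d_n$ as required, and propagates the inductive hypothesis with $n-1$ replaced by $n$: the new $g_n$ has diagonal entry $d_n$, lies in $\spans\{e_0,\dots,e_n\}$, and $\spans\{b_1,\dots,b_n,g_n\} = \spans\{e_0,\dots,e_n\}$. Since $g_n \perp b_1,\dots,b_n$ and $b_1,\dots,b_n$ were already orthonormal, $\mathfrak{b} = \{b_n\}_{n=1}^{\infty}$ is an orthonormal set.

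It remains to handle the span assertion: if $\prod_{i=n}^{\infty}\lambda_i = 0$ for every $n$, then $\spans\mathfrak{b} = \spans\mathfrak{e}$. The inclusion $\spans\mathfrak{b} \subseteq \spans\mathfrak{e}$ is immediate since each $b_n \in \spans\{e_0,\dots,e_n\}$. For the reverse inclusion it suffices to show each $e_n \in \overline{\spans}\,\mathfrak{b}$; equivalently, that the residual vectors $g_n$ escape to the orthogonal complement of every fixed $e_k$, i.e.\ $(g_n, e_k) \to 0$ as $n \to \infty$ for each fixed $k$, so that $e_k$ is recoverable as a limit of combinations of $b_1,\dots,b_n$ once $g_n$'s contribution vanishes. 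The key estimate is on $\abs{(g_n,g_{n-1})}$: since at each stage $g_n$ is a unit vector in $\spans\{g_{n-1},e_n\}$, write $g_n = (g_n,g_{n-1})\,g_{n-1} + \eta_n$ with $\eta_n \perp g_{n-1}$ and $\eta_n \in \spans\{b_1,\dots,b_n\}$ (being in $\spans\{e_0,\dots,e_n\} \ominus \spans\{g_n\}$-adjacent span — more carefully, $\eta_n$ lies in the span of $\{b_1,\dots,b_n\}$ since $\spans\{b_1,\dots,b_n,g_n\} = \spans\{e_0,\dots,e_{n-1},e_n\} = \spans\{b_1,\dots,b_{n-1},g_{n-1},e_n\}$ and $\eta_n \perp g_n$). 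Iterating, the component of $g_n$ along any earlier $g_m$ — hence along $e_m$ for $m \le$ the stage where $e_m$ first appeared — is bounded in modulus by $\prod_{i=m+1}^{n}\abs{(g_i,g_{i-1})} \le \prod_{i=m+1}^{n}\sqrt{\lambda_i}$, and the hypothesis $\prod_{i=m+1}^{\infty}\lambda_i = 0$ forces this to $0$. Writing $e_m$ in terms of $g_m$ and $b_1,\dots,b_m$, then substituting $g_m = (\text{projection onto } b\text{'s}) + (\text{component along } g_n \text{ for large } n)$ and letting $n\to\infty$, exhibits $e_m$ as a norm-limit of elements of $\spans\mathfrak{b}$, giving $\spans\mathfrak{e} \subseteq \overline{\spans}\,\mathfrak{b}$, and since $\mathfrak{b} \subseteq \spans\mathfrak{e}$ which is an orthonormal set this yields $\overline{\spans}\,\mathfrak{b} = \overline{\spans}\,\mathfrak{e}$, i.e.\ $\mathfrak{b}$ spans the same subspace as $\mathfrak{e}$ as claimed.

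The main obstacle I anticipate is making the span argument fully rigorous: one must carefully verify that at each stage $g_n$ is genuinely supported on $\spans\{b_1,\dots,b_n\} \oplus \mathbb{C}g_n$ (so that the only obstruction to recovering the $e_k$'s from the $b_k$'s is the shrinking $g_n$-component), and then convert the product estimate $\prod \lambda_i = 0$ into the statement that $g_n$'s overlap with any fixed early basis vector tends to zero. The $\abs{(e_1,f)}^2 \le \lambda$ clause of Lemma~\ref{lem:quantitative-toeplitz-hausdorff} is exactly what is needed here — it controls how much ``mass'' of the old residual $g_{n-1}$ is retained in the new residual $g_n$ — and the telescoping product of these overlaps is what the hypothesis $\prod_{i=n}^\infty \lambda_i = 0$ kills. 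Everything else (the existence and orthonormality of the $b_n$, and the diagonal values) is a routine induction off of Lemma~\ref{lem:quantitative-toeplitz-hausdorff}.
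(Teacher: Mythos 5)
Your proposal is correct and follows essentially the same route as the paper's proof: the same inductive application of Lemma~\ref{lem:quantitative-toeplitz-hausdorff} to the pair $\{g_{n-1},e_n\}$ (the paper calls your $g_n$ by $f_n$), the same telescoping identity $(e_k,g_n)=(e_k,g_{n-1})\cdot(g_n,g_{n-1})$, and the same use of $\prod\lambda_i=0$ to show the residual component of each $e_k$ vanishes. The only blemish is the parenthetical claim that $\eta_n\in\spans\{b_1,\ldots,b_n\}$ --- in fact $\eta_n$ is a multiple of $e_n$ and lies in $\spans\{b_n,g_n\}$ --- but this is immaterial, since the telescoping identity follows directly from $e_n\perp\spans\{e_0,\ldots,e_{n-1}\}$, which contains both $g_{n-1}$ and $e_k$.
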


\begin{proof}
  Set $f_0 \coloneqq{} e_0$. 
  Since $d_1 \in [d_0,r_1] = [r_0,r_1]$, by \autoref{lem:quantitative-toeplitz-hausdorff} with diagonal entries $r_0,r_1$, there exist orthonormal $f_1,b_1$ for which $\spans\{f_1,b_1\} = \spans\{f_0,e_1\}$ and $(Tf_1,f_1) = d_1$ and $(Tb_1,b_1) = r_1+r_0-d_1 = r_1+d_0-d_1$ and $\abs{(f_1,f_0)}^2 \le \lambda_1$. 

  Iterating this procedure produces an orthonormal set $\mathfrak{b} = \{b_n\}_{n=1}^{\infty}$ and a sequence of unit vectors $\{f_n\}_{n=0}^{\infty}$ satisfying, for each $n\in\mathbb{N}$,
  \begin{enumerate}[label={\textup{(\roman*)}$_n$}, ref={\textup{(\roman*)}}]
  \item $\spans \{f_n,b_n\} = \spans \{f_{n-1},e_{n}\}$; \label{item:2x2-span-3}
  \item $(Tf_n,f_n) = d_n$ and $(Tb_n,b_n) = r_n+d_{n-1}-d_n$; \label{item:desired-diagonal-3}
  \item $\abs{(f_n,f_{n-1})}^2 \le \lambda_n$; \label{item:small-inner-product-3}
  \item $\{b_1,\ldots,b_n,f_n\}$ is an orthonormal set; \label{item:orthonormality-3} 
  \item $\spans \{b_1,\ldots,b_n,f_n\} = \spans \{e_0,\ldots,e_n\}$. \label{item:same-span-3}
  \end{enumerate}
  We prove this via induction.
  The case $n=1$ is handled in the first paragraph. 
  
  Suppose that \ref{item:2x2-span-3}$_n$--\ref{item:same-span-3}$_n$ hold for some fixed $n \in \mathbb{N}$. 
  Then by hypothesis and \ref{item:desired-diagonal-3}$_n$ one has $d_{n+1} \in [d_n,r_{n+1}] = [(Tf_n,f_n),(Te_{n+1},e_{n+1})]$, so we may apply \autoref{lem:quantitative-toeplitz-hausdorff} to obtain orthonormal $f_{n+1},b_{n+1}$ for which \ref{item:2x2-span-3}$_{n+1}$--\ref{item:small-inner-product-3}$_{n+1}$ hold.
  By \ref{item:orthonormality-3}$_n$ we know that $f_n$ is orthogonal to $\spans \{b_1,\ldots,b_n\}$, and by \ref{item:same-span-3}$_n$ we know $e_{n+1}$ is orthogonal to $\spans \{b_1,\ldots,b_n\}$.
  Thus we obtain $\spans \{b_1,\ldots,b_n\}$ is orthogonal to $\spans \{f_n,e_{n+1}\} = \spans \{b_{n+1},f_{n+1}\}$ by \ref{item:2x2-span-3}$_{n+1}$, thereby establishing \ref{item:orthonormality-3}$_{n+1}$. 
  Finally, by \ref{item:2x2-span-3}$_{n+1}$ and \ref{item:same-span-3}$_n$ we find
  \begin{equation*}
    \spans \{b_1,\ldots,b_{n_+1},f_{n+1}\} = \spans\{b_1,\ldots,b_n,f_n,e_{n+1}\} = \spans \{e_0,\ldots,e_{n+1}\},
  \end{equation*}
  proving \ref{item:same-span-3}$_{n+1}$.
  Hence by induction we have shown \ref{item:2x2-span-3}--\ref{item:same-span-3} for all $n \in \mathbb{N}$.

  Suppose now that $\prod_{i=n}^{\infty} \lambda_i = 0$ for each $n\in\mathbb{N}$.
  Let $P_n$ be the projection on $\{b_1,\ldots,b_n\}$ and let $P$ be the projection onto $\spans \mathfrak{e}$.
  Observe $\spans \mathfrak{b} \subseteq \spans \mathfrak{e}$ by \autoref{item:same-span-3}, and so to prove $\spans\mathfrak{b} = \spans \mathfrak{e}$ it suffices to show that $(P-P_{n+k})e_n\to 0$ in norm as $k\to\infty$ for each $n\in\mathbb{Z}_{\ge 0}$. 
    
  Since $f_j \in \{f_{j-1},e_j\}$ for all $j \in \mathbb{N}$ by \ref{item:2x2-span-3}, one has
  \begin{equation}
    \label{eq:inner-product-split}
    \begin{aligned}
      (e_n,f_{n+k}) &= \Big(e_n,(f_{n+k},f_{n+k-1})f_{n+k-1}+(f_{n+k},e_{n+k})e_{n+k}\Big) \\ 
      &= (e_n,f_{n+k-1})\cdot (f_{n+k},f_{n+k-1}),
    \end{aligned}
  \end{equation}
  and from \ref{item:orthonormality-3}--\ref{item:same-span-3},  $P-P_{n+k}$ is the projection onto $\spans \{f_{n+k},e_{n+k+1},e_{n+k+2},\ldots\}$. 
  This, along with \ref{item:small-inner-product-3} and repeated use of \eqref{eq:inner-product-split} proves
  \begin{align*}
    \snorm{(P-P_{n+k})e_n}^2 & = \abs{(e_n,f_{n+k-1})}^2 \cdot \abs{(f_{n+k},f_{n+k-1})}^2                         \\
                               & = \abs{(e_n,f_n)}^2 \cdot \prod_{i=1}^k \abs{(f_{n+i},f_{n+i-1})}^2               
                                \le \abs{(e_n,f_n)}^2 \cdot \prod_{i=n+1}^{n+k} \lambda_i. 
  \end{align*}
  As $k\to\infty$ the latter product converges to zero by hypothesis.
\end{proof}

Our main result for this section characterizes the diagonals of the class of idempotents to be $\ell^{\infty}$. This, according to Jasper, also characterizes all inner products of dual frame pairs. 

\secondmaintheorem

\begin{proof}
  Let $\mathbb{N} = \bigsqcup_{j\in\mathbb{N}} \mathbb{N}_j$ be any partition of $\mathbb{N}$ such that each $\mathbb{N}_j$ is infinite. 
  Let $\phi_j:\mathbb{N}\to\mathbb{N}_j$ be any bijection. 
  Then for each $j$ define $d_{j,n} \coloneqq{} d_{\phi_j(n)}$;
  in this way we partition the desired sequence into infinitely many infinite sequences. 
  By Proposition~\ref{prop:infinite-multiplicity-diagonal} there is an idempotent $D\in B(H)$ and a basis $\mathfrak{e} = \bigsqcup_j \mathfrak{e}_j$ where $\mathfrak{e}_j \coloneqq{} \{e_{j,n}\}_{n\in\mathbb{Z}_{\ge 0}}$ for which
  \begin{equation*} 
  d_{j,0} \coloneqq{} 0 = (De_{j,0},e_{j,0}) \quad\text{and}\quad 2d_{j,n}-d_{j,n-1} = (De_{j,n},e_{j,n})\ \text{for $n\in\mathbb{N}$}.
  \end{equation*}
  In the above we have assigned $d_{j,0}=0$, and since there are infinitely many zeros, we can apply \autoref{prop:infinite-multiplicity-diagonal}. 
  Note however that $d_{j,0}$ bears no relation to the sequence $\langle d_n \rangle$, unlike $d_{j,n}$ when $n>0$. 

  The remainder of the argument is independent of $j$.
  For each $j$ we will employ a judicious use of \autoref{lem:bootstrap-fan}. 
  Our initial orthonormal set will be $\mathfrak{e}_j$ with diagonal entries $r_{j,n} = (De_{j,n},e_{j,n}) = 2d_{j,n}-d_{j,n-1}$. 
  We then note that $d_{j,n} \in [d_{j,n-1},2d_{j,n}-d_{j,n-1}]$ with convexity coefficient $\lambda_{j,n} = \nicefrac{1}{2}$ since $d_{j,n} = \frac{1}{2}(d_{j,n-1} + (2d_{j,n}-d_{j,n-1}))$. 
  Thus for any $n \in \mathbb{N}$. 
  \begin{equation*}
    \prod_{i=n+1}^{\infty} \lambda_{j,i} = \prod_{i = n+1}^{\infty} \frac{1}{2} = 0.
  \end{equation*}
  By \autoref{lem:bootstrap-fan} there exists an orthonormal set $\mathfrak{b}_j = \{b_{j,n}\}_{n=1}^{\infty}$ for which
  \begin{equation*}
    (Db_{j,n},b_{j,n}) = r_{j,n} + d_{j,n-1} - d_{j,n} = (2d_{j,n} - d_{j,n-1}) + d_{j,n-1} - d_{j,n}  = d_{j,n},
  \end{equation*}
  and $\spans \mathfrak{b}_j = \spans \mathfrak{e}_j$.
  Thus $\mathfrak{b} \coloneqq{} \bigcup_j \mathfrak{b}_j$ is a basis because $\mathfrak{e} = \bigcup_j \mathfrak{e}_j$ is a basis.
  With respect to the basis $\mathfrak{b}$ the idempotent $D$ has diagonal $\angles{d_{j,n}}$ which is precisely $\angles{d_n}$ after a suitable relabeling.
\end{proof}

\section{DIAGONALS OF THE CLASS OF FINITE RANK IDEMPOTENTS}

Recall that Lemma~\ref{lem:idempotent-decomp} is valid for both finite and infinite dimensional $H$. 
As a result, for $D\in M_n(\mathbb{C})$ with $0\not=D\not=I$, $\trace D = \rank D \in \{1,\ldots,n-1\}$. 
Theorem~\ref{thm:idempotent-matrix-diagonals} shows that this trace condition is the only restriction for a given sequence to be the diagonal of a nonzero non-identity idempotent matrix. 
Because not all idempotent operators $D\in B(H)$ ($H$ infinite dimensional) are trace-class, it is unnatural to expect there to be any sort of trace restriction on the diagonals of idempotent operators in $B(H)$. 
In this light, Theorem~\ref{thm:idempotent-diagonals-ell-infty} is naturally expected: if the only restriction in the $n\times n$ matrix case was the trace, there should be no restrictions in $B(H)$. 

However, there is another perfectly reasonable class to consider: the trace-class idempotents. 
Again, Lemma~\ref{lem:idempotent-decomp} ensures that trace-class idempotents are actually finite rank idempotents. 
The restriction that $\trace D = \rank D \in \mathbb{N}$ is still applicable for finite rank idempotents $D\in B(H)$. 
In this section we prove that, as for $M_n(\mathbb{C})$, this trace condition is the only restriction for an $\ell^1$ (absolutely summable) sequence to be the diagonal of a finite rank idempotent, which is Theorem~\ref{thm:finite-rank-idempotent-diagonals} below.

A corollary of the next lemma verifies Theorem~\ref{thm:finite-rank-idempotent-diagonals} when restricted to rank-one idempotents. 
That is, the diagonals of the class of rank-one idempotents are precisely those absolutely summable sequences which sum to one. 

\begin{lemma}
  \label{lem:T^2=Tr(T)T}
  If $T\in B(H)$ is a rank-one operator then $T^2 = \trace(T)T$, hence $T$ is idempotent if and only if $\trace T = 1$.
\end{lemma}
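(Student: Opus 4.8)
The plan is to represent $T$ concretely and reduce everything to a one-line computation. Since $T$ has rank one, there exist nonzero vectors $\xi,\eta\in H$ with $Tx = (x,\eta)\xi$ for all $x\in H$; that is, $T = \xi\otimes\eta$ in the notation used earlier in the paper. First I would compute the square directly: for any $x\in H$,
\[
  T^2x = T\big((x,\eta)\xi\big) = (x,\eta)\,T\xi = (x,\eta)(\xi,\eta)\,\xi = (\xi,\eta)\,Tx,
\]
so that $T^2 = (\xi,\eta)\,T$ as operators.

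Next I would identify the scalar $(\xi,\eta)$ with $\trace T$. A rank-one operator is trace-class, so its trace is finite and basis-independent; fixing any orthonormal basis $\{e_j\}$ and using Parseval's identity $\xi = \sum_j (\xi,e_j)e_j$, one gets
\[
  \trace T = \sum_j (Te_j,e_j) = \sum_j (e_j,\eta)(\xi,e_j) = \sum_j (\xi,e_j)(e_j,\eta) = (\xi,\eta).
\]
Combining this with the previous display yields $T^2 = \trace(T)\,T$, which is the first assertion.

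For the ``hence'' part, observe that $T\neq 0$ since $T$ has rank one. If $T$ is idempotent then $T = T^2 = \trace(T)\,T$; applying both sides to any vector $x$ with $Tx\neq 0$ forces $\trace T = 1$. Conversely, if $\trace T = 1$ then $T^2 = \trace(T)\,T = T$, so $T$ is idempotent. There is no real obstacle here: the computations are routine, and the only points meriting a word of care are fixing the sesquilinearity convention for $\xi\otimes\eta$ consistently throughout and noting that the trace of a rank-one (hence trace-class) operator is well defined independently of the basis, which is what legitimizes the second display.
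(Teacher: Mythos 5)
Your proof is correct and follows essentially the same route as the paper's own argument (specifically its second, "less coordinate free" version): write $Tz=(z,\eta)\xi$, compute $T^2=(\xi,\eta)T$, and identify $(\xi,\eta)$ with $\trace T$. Your explicit treatment of the "hence" clause is a harmless addition the paper leaves implicit.
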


\begin{proof}
  We may write any rank-one operator as an infinite matrix with entries $a_ib_j$ where $\langle a_i \rangle,\langle b_j \rangle\in\ell^2$. 
  Since the trace is independent of the choice of basis, $\trace T = \sum_{k=1}^\infty a_kb_k$.
  Finally, 
  \begin{equation*}
  T^2 = \left( \sum_{k=1}^\infty (a_ib_k)(a_kb_j) \right) = \left( a_i\left(\sum_{k=1}^\infty a_kb_k\right) b_j\right) = \trace(T) (a_ib_j) = \trace(T)T. 
  \end{equation*}
  Another proof which is less, but not entirely, coordinate free: since $T$ is rank-one, there are $x,y \in H$ for which $Tz = (z,x)y$. 
  By expanding $T$ in a basis for $H$ which contains $\nicefrac{y}{\snorm{y}}$, it is clear that $\trace T = (y,x)$. 
  Thus 
  \begin{equation*}
    T^2z = T(z,x)y = (z,x)(y,x)y = (y,x)Tz = \trace(T)Tz. \qedhere
  \end{equation*}
\end{proof}

\begin{corollary}
  \label{cor:rank-1-idempotent-diags}
  An absolutely summable sequence $\langle d_j \rangle\in\ell^1$ is the diagonal of some rank-one idempotent $D$ if and only if $\sum_j d_j = 1$. 
\end{corollary}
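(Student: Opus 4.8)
The plan is to deduce this directly from Lemma~\ref{lem:T^2=Tr(T)T}, which already identifies the rank-one idempotents as exactly the rank-one operators of trace $1$; only two small points remain, one in each direction. For necessity, suppose $D$ is a rank-one idempotent. Since rank-one operators are trace-class, the diagonal of $D$ in any basis is absolutely summable with sum equal to $\trace D$, independent of the basis. By Lemma~\ref{lem:T^2=Tr(T)T}, $D^2 = \trace(D)D$, and since $D = D^2 \ne 0$ this forces $\trace D = 1$; hence $\sum_j d_j = 1$ for every diagonal $\angles{d_j}$ of $D$.

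For sufficiency, let $\angles{d_j} \in \ell^1$ with $\sum_j d_j = 1$. I would factor $d_j = a_jb_j$ by setting $a_j \coloneqq{} \abs{d_j}^{1/2}$ and $b_j \coloneqq{} d_j\abs{d_j}^{-1/2}$ when $d_j \ne 0$, and $a_j = b_j = 0$ otherwise, so that $\abs{a_j}^2 = \abs{b_j}^2 = \abs{d_j}$ and therefore $\angles{a_j},\angles{b_j} \in \ell^2$; this is precisely where the hypothesis $\angles{d_j} \in \ell^1$ is used. Letting $D$ be the rank-one operator whose matrix relative to a fixed basis has $(i,j)$ entry $a_ib_j$ (as in the proof of Lemma~\ref{lem:T^2=Tr(T)T}), its diagonal is $\angles{d_j}$ and its trace is $\sum_j a_jb_j = \sum_j d_j = 1$, so by Lemma~\ref{lem:T^2=Tr(T)T} it is idempotent; being of trace $1$ it is in particular nonzero, hence genuinely rank one.

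There is no real obstacle here — the statement is a routine corollary of the lemma. The only point deserving the least care is the factorization step, where one must choose $a_j,b_j$ so that \emph{both} sequences land in $\ell^2$ (which dictates $\abs{a_j}^2 \sim \abs{d_j}$ rather than, say, the naive $a_j = d_j$, $b_j = 1$) and so that the product recovers $d_j$ with the correct phase; and one should not forget the small remark that $\trace D = 1 \ne 0$ is what rules out the degenerate case $D = 0$.
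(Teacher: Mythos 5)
Your proof is correct and follows essentially the same route as the paper: necessity via the basis-independence of the trace for trace-class operators together with Lemma~\ref{lem:T^2=Tr(T)T}, and sufficiency by factoring $d_j$ into a product of two $\ell^2$ entries and forming the rank-one matrix $(a_ib_j)$. The paper uses the symmetric factorization $a_j=b_j=\sqrt{r_j}\,\mathrm{e}^{\nicefrac{\mathrm{i}\theta_j}{2}}$ where you put the phase entirely into $b_j$, but this is only a cosmetic difference.
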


\begin{proof}
  One direction is trivial since $\sum_j d_j = \trace D = \rank D = 1$ by Lemma~\ref{lem:T^2=Tr(T)T}.
  
  For the other direction, let $\langle d_j \rangle \in\ell^1$ be any absolutely summable sequence which sums to one.  
  Write $d_j = r_j\mathrm{e}^{\mathrm{i}\theta_j}$ with $r_j\ge 0$ and $j\in\mathbb{R}$.  
  Then define $\sqrt{d}\in\ell^2$ as $(\sqrt{d})_j \coloneqq{} \sqrt{r_j}\mathrm{e}^{\nicefrac{\mathrm{i}\theta_j}{2}}$.  
  Then define $D = \left((\sqrt{d})_i(\sqrt{d})_j\right) = \sqrt{d}\otimes\sqrt{d}$. 
  By Lemma~\ref{lem:T^2=Tr(T)T}, $D$ is idempotent since its diagonal is $\langle d_n \rangle$ which sums to one.
\end{proof}

We now prove Theorem~\ref{thm:finite-rank-idempotent-diagonals} by two distinct methods. 
The first uses Theorem~\ref{thm:idempotent-matrix-diagonals}, Corollary~\ref{cor:rank-1-idempotent-diags}, and \autoref{lem:bootstrap-fan}.
The second proof is an inductive argument analogous to the proof of Theorem~\ref{thm:idempotent-matrix-diagonals} by Giol, Kovalev, Larson, Nguyen and Tener in \cite{GKL+-2011-OaM}.
It uses Corollary~\ref{cor:rank-1-idempotent-diags} as the base case and exploits the fact that the class of finite rank idempotents is similarity invariant.

\thirdmaintheorem

\begin{proof}[Proof using \autoref{lem:bootstrap-fan}]
  Lemma~\ref{lem:idempotent-decomp} makes this sum condition obviously necessary, so sufficiency is all that is needed. 
Let $d \coloneqq{} \langle d_n \rangle\in\ell^1$ be an absolutely summable sequence whose sum $\sum_n d_n = m$ is a positive integer. 
  If $m=1$, then $\langle d_n \rangle$ is the diagonal of a rank-one idempotent by Corollary~\ref{cor:rank-1-idempotent-diags}. 
  So suppose $m>1$, in which case $m-1\in\mathbb{N}$. 
  Set $d'_m \coloneqq{} (m-1) - \sum_{n=1}^{m-1} d_n$. 
  By Theorem~\ref{thm:idempotent-matrix-diagonals}, there is an idempotent matrix $D_1 \in M_m(\mathbb{C})$ with diagonal $d^{(1)} \coloneqq{} \langle d_1,\ldots,d_{m-1},d'_m \rangle$. 
  Now consider the sequence $d^{(2)} \coloneqq{} \langle 2d_m-d'_m,2d_{m+1}-d_m,2d_{m+2}-d_{m+1},\ldots \rangle$. 
  It is clear that $d^{(2)}\in\ell^1$ since $d\in\ell^1$. 
  Furthermore, 
  \begin{equation*}
  \sum_{n=1}^\infty d^{(2)}_n = 2d_m-d'_m + \sum_{n=m}^\infty (2d_{n+1}-d_n) = -d'_m + \sum_{n=m}^\infty d_n = \sum_{n=1}^\infty d_n - (m-1) = 1. 
  \end{equation*}
  Therefore, by Corollary~\ref{cor:rank-1-idempotent-diags}, there is a rank-one idempotent $D_2$ with diagonal sequence $d^{(2)}$. 
  Defining $D = D_1 \oplus D_2$, we find that $D$ has a basis $\mathfrak{e} \coloneqq\{e_n\}_{n\in\mathbb{N}}$ in which its diagonal is 
  \begin{equation*}
  \langle d_1, \ldots, d_{m-1}, d'_m, 2d_m-d'_m, 2d_{m+1}-d_m, 2d_{m+2}-d_{m+1}, \ldots \rangle.
  \end{equation*}
  That is, $(De_n,e_n) = d_n$ for $1\le n<m$; $(De_m,e_m) = d'_m$; $(De_{m+1},e_{m+1}) = 2d_m-d'_m$; and $(De_n,e_n) = 2d_{n-1}-d_{n-2}$ for $n>m+1$. 

  We will now apply \autoref{lem:bootstrap-fan} to the orthonormal set $\{e_m,e_{m+1},\ldots\}$. 
  So 
  \begin{equation*}
    r_n \coloneqq{} (De_n,e_n) =
    \begin{cases}
      d'_m               & \text{if $n=m$}   \\
      2d_m - d'_m        & \text{if $n=m+1$} \\
      2d_{n-1} - d_{n-2} & \text{if $n>m+1$} \\
    \end{cases}
  \end{equation*}
  Since $d_m \in [r_m,r_{m+1}]$ and $d_n \in [d_{n-1},r_{n+1}]$ for $n>m$ (with convexity coefficients all $\lambda_n \equiv \nicefrac{1}{2}$), after a suitable relabeling of the sequences involved ($r_n \mapsto r_{n-m}; d_n \mapsto d_{n-m+1}$) we may apply \autoref{lem:bootstrap-fan} to obtain an orthonormal set $\{b_m,b_{m+1},\ldots\}$ satisfying
  \begin{equation*}
    (Db_n,b_n) = 
    \begin{cases}
      r_{m+1}+r_m-d_m & \text{if $n=m$} \\
      r_{n+1}+d_{n-1}-d_n & \text{if $n>m$} \\
    \end{cases}
    \Bigg\}
    = d_n.
  \end{equation*}
  Moreover, by \autoref{lem:bootstrap-fan}, since the convexity coefficients are $\lambda_n = \nicefrac{1}{2}$, we have $\spans\{b_n\}_{n=m}^{\infty} = \spans\{e_n\}_{n=m}^{\infty}$. 
  Setting $b_n \coloneqq{} e_n$ for $n<m$, we find that $\mathfrak{b} = \{b_n\}_{n=1}^{\infty}$ is a basis with respect to which $D$ has diagonal $\angles{d_n}$.
\end{proof}

\begin{proof}[Proof by induction using techniques from \cite{GKL+-2011-OaM}]
We proceed by induction on the sum $m\coloneqq{} \sum_{n=1}^\infty d_n$ where $\langle d_n \rangle\in\ell^1$ is an absolutely summable sequence whose sum is a positive integer. 
The base case $m=1$ is handled by Corollary~\ref{cor:rank-1-idempotent-diags}. 

Now suppose $m>1$ and for any absolutely summable sequence whose sum is $m-1$, there is a finite rank idempotent with that sequence on its diagonal. 
By possibly permuting the sequence $d_n$, we may assume without loss of generality that $d_1+d_2\not=2$. 
Since $\sum_{n=1}^\infty d_n = m$, then $(d_1+d_2-1)+\sum_{n=3}^\infty d_n = m-1$. 
So by the induction hypothesis there exists a finite rank (in fact, rank-$(m-1)$) idempotent $\tilde{D}$ with diagonal sequence $\langle d_1+d_2-1,d_3,d_4,\ldots \rangle$. 
Then consider the rank-$m$ operator 
\begin{equation*}
D' =
\begin{pmatrix}
  1                  & 0_{1\times\infty} \\
  0_{\infty\times 1} & \tilde{D}         \\
\end{pmatrix}, 
\end{equation*}
which is obviously idempotent. 
With respect to the basis $\mathfrak{e} = \{e_j\}_{j=1}^\infty$, $D'$ has diagonal $\langle 1,d_1+d_2-1,d_3,d_4,\ldots \rangle$.
Then consider the invertible $S$ which is the identity on $\spans\{e_j\}_{j=3}^\infty$ and whose compression to $\spans\{e_1,e_2\}$ has the matrix representation
\begin{equation*}
\begin{pmatrix}
  \lambda            & \lambda-1           \\
  1                  & 1                   \\
\end{pmatrix},
\end{equation*}
where $\lambda \coloneqq{} \nicefrac{(d_2-1)}{(d_1+d_2-2)}$.
Conjugating $D'$ by $S$ produces an idempotent $D \coloneqq{} SD'S^{-1}$ whose diagonal with respect to $\mathfrak{e}$ is precisely the sequence $d$. 

The reader should note that although conjugating by a similarity can be viewed as changing the \emph{linear} basis  (as opposed to conjugating by a unitary which changes the \emph{orthonormal} basis) we are not using the similarity in this context. 
Instead, we only use the similarity to produce a new idempotent $D$ (which still has finite rank) and has the desired diagonal with respect to the \emph{orthonormal} basis~$\mathfrak{e}$.
\end{proof}

\section*{ACKNOWLEDGMENTS}

The authors are indebted to John Jasper who at GPOTS 2013 alerted us to his frame theory and equivalent operator theory problems and their history. 
And special thanks to Daniel Belti\c{t}\u{a} for the reference to the work of Fan, Fong and Herrero \cite{Fan-1984-TotAMS}, \cite{FF-1994-PotAMS}, \cite{FFH-1987-PotAMS}.

Jean-Christophe Bourin deserves special mention for communicating to us that our \autoref{thm:idempotent-diagonals-ell-infty} is a corollary of his pinching theorem \cite[Theorem 2.1]{Bou-2003-JOT}. 
In particular, to apply his theorem one only needs to produce idempotents whose essential spectra contain disks centered at the origin.  
This is especially interesting because his methods of proof are completely different from ours.

The first author was supported by the Charles Phelps Taft Dissertation Fellowship. 
The second author was partially supported by the Simons Foundation Collaboration Grant for Mathematicians \#245014 and the Charles Phelps Taft Research Center.


\end{document}